\newtheorem{remark}[theorem]{Remark}
\newtheorem{assumption}[theorem]{Assumption}
\newcommand{\TheTitle}{Generalizing parallel replica dynamics} 
\newcommand{\TheAuthors}{D. Aristoff}
\headers{\TheTitle}{\TheAuthors}
\title{{Generalizing parallel replica dynamics: trajectory fragments, asynchronous computing, and PDMPs}\thanks{This work was supported by the
National Science Foundation 
via the awards
NSF-DMS-1522398 and 
NSF-DMS-1818726.}}
\author{
  David Aristoff\thanks{Colorado State University}
    (\email{aristoff@math.colostate.edu})
  }
\begin{document}

\maketitle

\begin{abstract} 
We study the Parallel Replica 
Dynamics in a general 
setting. We introduce 
a trajectory fragment framework that can 
be used to design and prove 
consistency of Parallel 
Replica algorithms 
for generic Markov 
processes. We use our 
framework to formulate 
a novel condition that 
guarantees an 
asynchronous algorithm 
is consistent. Exploiting 
this condition and 
our trajectory fragment 
framework, we
present new 
synchronous and 
asynchronous Parallel Replica 
algorithms for piecewise 
deterministic Markov processes.
\end{abstract}

\begin{keywords}
Parallel Replica Dynamics, long time dynamics, stationary distributions, asynchronous computing, piecewise deterministic Markov processes
\end{keywords}

\begin{AMS}
  65C05, 65C20, 65C40, 
  65Y05, 82C80
\end{AMS}

\section{Introduction}

Many problems in applied sciences 
require the sampling of 
complex probability distributions. 
In computational chemistry -- 
which is the main setting 
of this article -- such 
distributions can arise from 
stochastic models of molecular 
dynamics~\cite{lelievre2010free} or chemical reaction networks~\cite{anderson2011continuous}, 
while obstacles to efficient 
sampling include high dimensionality 
and metastability, the latter being 
the tendency to become stuck in certain 
subsets of state space~\cite{lelievre2013two}. 
Some attempts to surmount 
these difficulties have been based on importance sampling and 
stratification~\cite{shirts2008statistically,thiede2016eigenvector,torrie1977nonphysical,vardi1985empirical}, interacting particles~\cite{de2001sequential,del2004feynman,del2014particle,del2005genealogical}, coarse graining and preconditioning~\cite{aristoff2016analysis,lyman2006resolution,tempkin2014using}, accelerated dynamics~\cite{lelievre2015accelerated,so2000temperature,voter1997hyperdynamics,voter1998parallel} 
and nonreversibility~\cite{duncan2016variance,goodman2010ensemble,lelievre2013optimal,rey2015irreversible,wu2014attaining}.

This article concerns the 
Parallel Replica Dynamics
(ParRep)~\cite{voter1998parallel}, 
an accelerated 
dynamics method designed to 
overcome metastability. ParRep has 
two distinct advantages over many other 
enhanced sampling methods. First, 
it computes correct {\em dynamical}~\cite{aristoff2014parallel,le2012mathematical,voter1997hyperdynamics} as well as 
{\em stationary} or {\em equilibrium}~\cite{aristoff2015parallel,wang2018stationary} quantities associated with a stochastic process. 
And second, ParRep is very general: it only 
requires mild assumptions on the underlying process. 
Indeed, though originally 
intended for Langevin dynamics~\cite{le2012mathematical,voter1997hyperdynamics,voter2012accelerated}, straightforward 
extensions of ParRep 
to discrete and 
continuous time 
Markov chains have appeared in~\cite{aristoff2015parallel,aristoff2014parallel,wang2018stationary}.

The goal of this article 
is as follows. First, we introduce  a new mathematical framework that may be used to design, and 
prove consistency of, ParRep algorithms for Markov processes satisfying a few mild assumptions. Second, we use our framework 
to obtain valuable insights 
about 
asynchronous computing. 
In particular, we present specific, novel 
conditions that ensure an asynchronous ParRep algorithm 
is consistent.
Lastly, we construct 
ParRep algorithms 
for piecewise deterministic 
Markov processes (PDMPs) in both the 
synchronous and asynchronous setting, leaning on our new 
framework to demonstrate their consistency. 
Asynchronous ParRep 
algorithms must be carefully designed, since as we show below, 
inaccuracies can arise
when the speed of computing paths of the underlying process is 
coupled to the process itself.

PDMPs are emerging as an useful 
tool in fields as 
diverse as applied probability~\cite{monmarche2014piecewise}, computational chemistry~\cite{alfonsi2005adaptive,bierkens2017piecewise,harland2017event,kang2013separation,michel2014generalized,nishikawa2016event,rudnicki2017piecewise,salis2005accurate,winkelmann2017hybrid}, machine 
learning~\cite{bouchard2017bouncy,pakman2016stochastic,wu2017generalized}, 
and big data~\cite{bierkens2016zig}. 
As indicated by the name, PDMPs
move along deterministic paths in between 
random jump times. In the context of chemical reaction networks,
PDMPs called {\em hybrid models} can be obtained 
by approximating
fast reactions by a deterministic 
flow, and representing slow reactions with an 
appropriate Poisson process~\cite{kang2013separation,winkelmann2017hybrid}. The 
resulting PDMPs can be metastable~\cite{bressloff2017stochastic,bressloff2018stochastic,bressloff2013metastability,bressloff2014path,newby2015bistable,newby2012isolating,newby2014spontaneous,newby2013breakdown,newby2011asymptotic}, 
making direct simulation
unattractive. Several 
PDMP-based algorithms have 
also been proposed for sampling from distributions known up to normalization -- like the Boltzmann distribution or the posterior distribution in Bayesian 
analysis -- including
Event chain Monte Carlo~\cite{harland2017event,michel2014generalized}, the Zig-Zag process~\cite{bierkens2016zig}, 
and the Bouncy Particle Sampler~\cite{bouchard2017bouncy,monmarche2014piecewise}. 
Below, we give a general argument 
suggesting these PDMPs also 
become metastable under certain 
conditions.

This article is organized as follows.  Section~\ref{sec:notation} 
defines notation that 
we use throughout. In Section~\ref{sec:metastability}, 
we formally define metastability 
in terms of {quasistationary distributions}.  
We describe ParRep in more detail, and explain what we mean 
by a consistent ParRep 
algorithm, in Section~\ref{sec:parrep}. 
In Section~\ref{sec:framework} 
we outline a general mathematical 
framework for ParRep, and in 
Section~\ref{sec:asynch} we study 
synchronous and 
asynchronous computing. 
Section~\ref{sec:PDMPs} 
serves as a brief introduction 
to PDMPs, while 
Section~\ref{sec:parrep_pdmp} 
outlines several ParRep 
algorithms for PDMPs that 
are based on our framework 
from Section~\ref{sec:framework}. 
A numerical example is in 
Section~\ref{sec:numerics}.
All proofs are in Section~\ref{sec:proofs}.

\section{Notation}\label{sec:notation}

Throughout,
$X(t)_{t \ge 0}$ is 
a time 
homogeneous Markov process, {either discrete or continuous 
in time}, with values in a standard Borel state space; 
$U$ is a subset 
of state space; and $g$ 
is a real-valued 
function defined on 
state space. 
Without 
explicit mention we assume
all sets are measurable and all functions are 
bounded and measurable.
We write 
$X(t)$ to refer 
to the process $X(t)_{t \ge 0}$ at 
time $t$. We 
denote various expectations and probabilities by
${\mathbb E}$ and ${\mathbb P}$, 
with the precise meaning being 
clear from context. 
We write ${\mathcal L}$  
for the probability law of a 
random object, with ${\mathcal L}$ 
above an equals sign indicating
equality in law. We say a random 
object is a {\em copy} of another random 
object if it has the same law as that object. 
When we say a collection of random 
objects is independent we mean 
these objects are {\em mutually} independent unless otherwise specified.
We define $a\wedge b = \min\{a,b\}$ and 
$a \vee b =\max\{a,b\}$, 
and write $\lfloor s\rfloor$ 
for the greatest integer less than or equal to $s$.

\section{Metastability}\label{sec:metastability}
Informally, $U$ is a metastable 
set for $X(t)_{t \ge 0}$ if 
$X(t)_{t \ge 0}$ tends to reach 
a local equilibrium in $U$ much faster than it escapes from $U$. Local equilibrium 
can be understood in terms of {\em quasistationary distributions} (QSDs):

\begin{definition}
Fix a subset
$U$ of state space, and consider
$$T = \inf\{t\ge 0:X(t) \notin U\},$$
the 
first time $X(t)_{t \ge 0}$ escapes~$U$.
A QSD $\rho$ of $X(t)_{t \ge 0}$ in 
$U$ satisfies $\rho(U) = 1$ and
\begin{equation}\label{QSDdef}
\rho(A) = {\mathbb P}(X(t) \in A|{\mathcal L}(X(0)) = \rho,\,T>t)
\end{equation}
for every $t \ge 0$ and $A \subseteq U$. 
\end{definition}

Note that $\rho$ is supported in $U$. Equation~\eqref{QSDdef} states 
that if $X(0)$ is distributed as $\rho$ and $X(t)_{t \ge 0}$
does not escape from $U$ by time $t$, then $X(t)$ is distributed 
as $\rho$.
Throughout, we will assume the QSD of $X(t)_{t \ge 0}$ in 
$U$ exists, is unique, and is the long time distribution of $X(t)$ conditioned to never escape $U$. 
That is, we assume that for any initial distribution of $X(0)$ supported in $U$,
\begin{equation}\label{QSDconv}
\rho(A) = \lim_{t \to \infty} {\mathbb P}(X(t) \in A|X(s) \in U \text{ for }s \in [0,t]) \quad \forall \,A \subseteq U.
\end{equation}
The QSD $\rho$ can then be sampled as follows: choose a time  $T_{corr}^\rho(U)$ 
for relaxation to $\rho$. 
Start $X(t)_{t \ge 0}$ in $U$, 
and if it escapes from
$U$ before time $t = T_{corr}^\rho(U)$, restart it in 
$U$. Repeat this until 
a trajectory of $X(t)_{t \ge 0}$ 
remains in $U$ for a consecutive time 
interval of length $T_{corr}^\rho(U)$. 
This trajectory's terminal position is 
then a sample of $\rho$. For more 
details on the QSD see for 
instance~\cite{collet2012quasi}. 
For conditions ensuring existence of and 
convergence to the QSD for general 
Markov processes, see~\cite{champagnat2016exponential,champagnat2017general,collet2012quasi}.

A more formal definition of metastability is: a set $U$ is metastable for $X(t)_{t \ge 0}$
if the time scale to reach $\rho$ is small 
compared to the mean time to escape from $U$ starting at $\rho$. In 
some cases these times can be written
in terms of the eigenvalues 
of the adjoint, $L^*$, of the generator $L$ of $X(t)_{t \ge 0}$,
with absorbing boundary conditions on the complement of~$U$.
See~\cite{le2012mathematical} 
and~\cite{wang2018stationary} 
for the corresponding 
spectral analysis for
overdamped Langevin dynamics and finite state space discrete and continuous time Markov chains, and~\cite{binder2015generalized} 
for an application of these 
ideas to choosing $T_{corr}^\rho(U)$.

\section{Parallel replica dynamics}\label{sec:parrep}

ParRep can boost the efficiency of 
simulating metastable processes~\cite{aristoff2015parallel,aristoff2014parallel,le2012mathematical,voter1998parallel,voter2012accelerated,wang2018stationary}.
Currently, implementations have 
been proposed only for Langevin 
or overdamped Langevin dynamics~\cite{le2012mathematical} 
and discrete or continuous time Markov chains~\cite{aristoff2015parallel,aristoff2014parallel,wang2018stationary}. However, the 
generality of ParRep allows 
for extensions to any metastable
time homogeneous strong Markov 
process with c\`adl\`ag paths, in cases where the QSD exists and
metastable sets can be identified. 
We make this precise in 
the next section.

ParRep algorithms are based 
on two basic steps: 
\vskip5pt
\begin{itemize}
\item A step in which $X(t)_{t \ge 0}$ is allowed to reach the QSD in some metastable set $U$, 
using direct or serial simulation -- called the {\em decorrelation step};
\item A step generating an escape event from $U$, starting from the QSD, using parallel simulation -- called the {\em parallel step}.
\end{itemize}
\vskip5pt

By {\em escape event} 
we mean the random pair $(T,X(T))$, where 
$T$ is the time for 
$X(t)_{t \ge 0}$ 
to escape from $U$ when 
$X(0)$ is distributed as the QSD 
in $U$, and $X(T)$ is the 
corresponding
escape point. The parallel 
step efficiently computes an escape 
event starting from the QSD 
via a sort of time parallelization.

The decorrelation step, as 
it uses only serial 
simulation, is exact. 
By {\em exact} we mean there 
is {\em zero error} -- except for 
the inevitable error 
in simulating $X(t)_{t\ge 0}$ 
arising from numerical discretizations, 
which we will ignore.
Our analysis 
will therefore focus on the 
parallel step. 

The parallel step is sometimes divided 
into two subrouties: first, 
a routine that generates 
independent samples 
of the QSD in $U$ -- called {\em dephasing} -- and 
second, a routine that 
uses copies of $X(t)_{t \ge 0}$ 
starting from these QSD samples to generate 
an {escape event} of $X(t)_{t \ge 0}$ 
from $U$. Below, we will mostly omit discussion of the dephasing 
routine, and we will not discuss the error 
associated with imperfect convergence 
to the QSD in the dephasing and decorrelation 
steps, as these points have been previously studied in~\cite{binder2015generalized,le2012mathematical,simpson2013numerical,wang2018stationary}.

We say the parallel step of a ParRep algorithm is 
{\em consistent} when:
\vskip5pt
\begin{itemize}
\item The parallel step generates  
escape events from each metastable set $U$ with the correct probability law -- see Theorem~\ref{theorem_dyn} below; 
\item The parallel step produces 
correct mean contributions to 
time averages in each metastable set $U$ -- see Theorem~\ref{theorem_avg} below.
\end{itemize}
\vskip5pt

By {\em correct} we mean {exact} 
provided the QSD sampling has zero error. 
A consistent ParRep algorithm 
defines a 
coarse dynamics, that is,
a dynamics that is correct 
on the quotient space obtained 
by considering each metastable 
set as a single point~\cite{aristoff2015parallel,aristoff2014parallel,wang2018stationary}. A 
consistent ParRep algorithm also 
defines stationary averages that are 
correct for functions 
defined on the original 
uncoarsened state 
space~\cite{aristoff2015parallel,wang2018stationary}. ParRep produces only a coarse dynamics because the parallel step does not resolve the
exact behavior of $X(t)_{t \ge 0}$. The parallel step is faithful enough to $X(t)_{t\ge 0}$, however,
to produce correct stationary averages on the original uncoarsened state space~\cite{aristoff2015parallel}.

Previous analyses of ParRep 
have relied on the structure 
of $X(t)_{t \ge 0}$ and the particular algorithms 
studied~\cite{aristoff2015parallel,aristoff2014parallel,le2012mathematical,wang2018stationary}. We introduce a new
framework below that allows us 
to study the consistency of 
any ParRep algorithm.  Our 
analysis is inspired by 
{\em ParSplice}, a recent implementation 
of ParRep employing asynchronous 
computing~\cite{perez2017parsplice,perez2018long,swinburne2018self}. 
Our framework provides 
explicit conditions that 
ensure an asynchronous ParRep algorithm 
is consistent. In particular, it shows a certain class 
of asynchronous ParRep 
algorithms is consistent 
provided the wall-clock time 
to simulate a step of 
$X(t)_{t \ge 0}$
is not coupled to its 
position in state space; see 
Section~\ref{sec:asynch} 
below for precise statements.

\section{Trajectory fragments}\label{sec:framework}

We now formalize 
conditions which lead 
to consistency of ParRep. 
Our arguments are based on what 
we call {\em trajectory 
fragments}. The 
fragments are copies of the underlying process 
satisfying the dependency conditions of 
Assumption~\ref{A1} below. 
In practice, the trajectory fragments 
may be computed asynchronously 
in parallel. We discuss this in the 
next section.

\begin{assumption}\label{A1}
Let $X(t)_{t \ge 0}$ have 
c{\`a}dl{\`a}g paths and 
the strong Markov property. 
Assume $X(t)_{t \ge 0}$ has a QSD $\rho$
in an open set $U$ and that $T = \inf\{t \ge 0:X(t) \notin U\}$ is finite almost surely. 
Let
$\,(X_{m}(t)_{t\ge 0},T_m)_{m \ge 1}$ be copies of $(X(t)_{t \ge 0},T)$ such that:
\begin{align}
&\text{conditional 
on }X_m(0), X_m(t)_{t \ge 0} \text{ is 
independent of }(X_k(t)_{t \ge 0})_{1 \le k < m}; \label{assump1}\\
&{\mathcal L}(X_{m}(0)|T_{m-1}>t_{m-1},\ldots,T_1> t_1) = \rho \text{ for } m \ge 2, \quad {\mathcal L}(X_1(0)) = \rho,\label{assump2}
\end{align}
where $t_m>0$ are deterministic times satisfying $\sum_{m=1}^{\infty} t_m = \infty$.
\end{assumption}

\begin{algorithm}[b!]
\caption{A general parallel step in $U$}  
Let
Assumption~\ref{A1} hold and adopt the notation 
therein.
\vskip5pt
{\bf 1.} Define
$L = \inf\{m \ge 1:T_m \le t_m\}$.

{\bf 2.} In the discrete time case, set 
\begin{equation*}
g_{par} = {\mathbb E}\left(\sum_{m=1}^{L} \sum_{t=0}^{T_m \wedge t_m-1} g(X_m(t))\right) 
\end{equation*}
while in the continuous time case, set
\begin{equation*}
g_{par} = {\mathbb E}\left(\sum_{m=1}^{L} \int_0^{T_m \wedge t_m} g(X_m(t))\,dt\right).
\end{equation*}

{\bf 3.} Let $T_{par} =  t_1 + \ldots + t_{L-1}+T_L$ and $X_{par} = X_L(T_L)$.
\vskip5pt

Once $g_{par}$, $T_{par}$ and $X_{par}$ can be computed, the parallel step is complete. This 
parallel step is consistent in the sense of Theorem~\ref{theorem_dyn} and 
Theorem~\ref{theorem_avg} below. \label{alg_gen}
\end{algorithm}

The $X_m(t)_{0 \le t \le t_m}$ 
from Assumption~\ref{A1} 
are the {\em trajectory 
fragments}. We will 
refer to $X_m(t)_{t > t_m}$ 
as a fragment's {\em 
irrelevant future}. The 
reason for this choice 
of words is that the 
output of a general 
parallel step, described 
in Algorithm~\ref{alg_gen}, is the same no matter how the $X_m(t)_{t \ge 0}$ 
are defined for times $t>t_m$.

Algorithm~\ref{alg_gen}  
outlines a general parallel step. As discussed above, this parallel step 
can be combined with a decorrelation step 
to compute a coarse dynamics or 
a time average of a function $g$.
The idea behind 
Algorithm~\ref{alg_gen} 
is simple -- we  
imagine concatenating 
fragments 
whose starting points 
and terminal points are distributed 
as the QSD $\rho$, thus  
obtaining an artificial
long trajectory. See Figure~\ref{fig_intuition} below.
One must be careful, however, 
in treating
dependencies of the fragments. 
The dependencies described 
in Assumption~\ref{A1} lead 
to a consistent Algorithm~\ref{alg_gen} 
in the sense of Theorems~\ref{theorem_dyn}-\ref{theorem_avg}. 
More general dependencies 
can violate consistency, 
as we will discuss in 
the next section.

Our next two results demonstrate 
consistency of Algorithm~\ref{alg_gen} under 
the conditions in 
Assumption~\ref{A1}. 
Theorem~\ref{theorem_dyn} 
states that Algorithm~\ref{alg_gen} produces
the correct escape law 
from $U$ starting at the 
QSD in $U$, 
while Theorem~\ref{theorem_avg} says that Algorithm~\ref{alg_gen} produces 
the correct mean contribution to time averages.

\begin{theorem}\label{theorem_dyn}
Suppose that Assumption~\ref{A1} holds. Let $X(t)_{t \ge 0}$ be such that ${\mathcal L}(X(0)) = \rho$, and set $T = \inf\{t>0:X(t) \notin U\}$. Then in Algorithm~\ref{alg_gen},  $$(T_{par},X_{par}) \stackrel{\mathcal L}{=} (T,X(T)).$$
\end{theorem}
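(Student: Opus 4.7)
The plan is to show $\mathbb{P}(T_{par} \le t, X_{par} \in A) = \mathbb{P}_\rho(T \le t, X(T) \in A)$ for all $t \ge 0$ and measurable $A$, which characterizes the joint law.

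First I would establish a memoryless property of the escape time. The QSD definition gives $\mathcal{L}(X(s) \mid T > s) = \rho$ for a $\rho$-started process, and since $\{T > s\}$ is $\mathcal{F}_s$-measurable (using the c\`adl\`ag assumption), the strong Markov property implies that $(X(s + r))_{r \ge 0}$ conditional on $T > s$ is a fresh $X$-process from $\rho$. Consequently,
\begin{equation*}
\mathcal{L}(T - s,\, X(T) \mid T > s) = \mathcal{L}(T, X(T)),
\end{equation*}
and in particular $\mathbb{P}_\rho(T > s + r) = \mathbb{P}_\rho(T > s)\,\mathbb{P}_\rho(T > r)$. Setting $\tau_m := t_1 + \ldots + t_m$, iterating this factorization yields $\prod_{j=1}^{m-1} \mathbb{P}_\rho(T > t_j) = \mathbb{P}_\rho(T > \tau_{m-1})$.

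Next I would decompose by the value of $L$. Since $\{L = m\} = \{T_1 > t_1, \ldots, T_{m-1} > t_{m-1}, T_m \le t_m\}$ and on this event $T_{par} = \tau_{m-1} + T_m$ and $X_{par} = X_m(T_m)$, one has
\begin{equation*}
\mathbb{P}(T_{par} \le t, X_{par} \in A) = \sum_{m=1}^{\infty} \mathbb{P}\bigl(T_1 > t_1, \ldots, T_{m-1} > t_{m-1},\; T_m \le t_m \wedge (t - \tau_{m-1}),\; X_m(T_m) \in A\bigr).
\end{equation*}
Conditions \eqref{assump1}--\eqref{assump2} of Assumption~\ref{A1} say that conditional on $\{T_1 > t_1, \ldots, T_{m-1} > t_{m-1}\}$, the fragment $(X_m(t)_{t \ge 0}, T_m)$ is distributed as a fresh copy of $(X(t)_{t \ge 0}, T)$ with $X_m(0) \sim \rho$, independent of the earlier fragments. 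Applying this with the iterated identity from Step~1 gives
\begin{equation*}
\mathbb{P}(T_{par} \le t, X_{par} \in A) = \sum_{m=1}^{\infty} \mathbb{P}_\rho(T > \tau_{m-1}) \cdot \mathbb{P}_\rho\bigl(T \le t_m \wedge (t - \tau_{m-1}),\; X(T) \in A\bigr).
\end{equation*}

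To finish, I would expand $\mathbb{P}_\rho(T \le t, X(T) \in A)$ on the original $X$-process by partitioning $[0, \infty)$ into the intervals $[\tau_{m-1}, \tau_m)$, which cover $[0,\infty)$ because $\sum t_m = \infty$, and applying the memoryless identity from Step~1 conditionally on $\{T > \tau_{m-1}\}$ for each $m$. This produces exactly the same series, completing the proof. The main obstacle is pinning down the memoryless property cleanly from the QSD definition and Markov property in both discrete and continuous time, and justifying the inductive stripping of fragments — i.e., that the conditioning event $\{T_1 > t_1, \ldots, T_{m-1} > t_{m-1}\}$ lies in the $\sigma$-algebra generated by $(X_1, \ldots, X_{m-1})$ so that \eqref{assump1}--\eqref{assump2} can be chained; edge cases where $t - \tau_{m-1} < 0$ (for which the summand vanishes since $T > 0$ a.s.) need only a passing remark.
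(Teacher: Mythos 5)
Your argument is correct, and its skeleton coincides with the paper's: decompose on $\{L=m\}$, use \eqref{assump1}--\eqref{assump2} to strip off the $m$-th fragment as a fresh $\rho$-started copy conditional on survival of the earlier ones, and invoke the QSD renewal property $\mathcal{L}(T-s,X(T)\mid T>s)=\mathcal{L}(T,X(T))$. Where you genuinely diverge is the finishing step. The paper splits the conclusion in two: it first proves $\mathcal{L}(T_{par})=\mathcal{L}(T)$ by a moment generating function computation (Proposition~\ref{prop_1}, done separately in the continuous and discrete cases), and separately establishes independence of $T$ and $X(T)$ under $\rho$ (Proposition~\ref{prop_indep_exit}) so that the conditional probability $\mathbb{P}(X_{par}\in A, T_{par}>t\mid L=m)$ factorizes, after which summing over $m$ yields independence of $(T_{par},X_{par})$ and the correct marginal for $X_{par}$. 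You instead compute the joint quantity $\mathbb{P}(T_{par}\le t, X_{par}\in A)$ as a series and match it term by term against the expansion of $\mathbb{P}_\rho(T\le t, X(T)\in A)$ over the partition $(\tau_{m-1},\tau_m]$ of $[0,\infty)$ (which covers everything since $\sum t_m=\infty$). This buys you three things: no MGF computation, no need to first prove independence of $T$ and $X(T)$ (the joint renewal identity suffices), and a single argument covering discrete and continuous time simultaneously. The price is that you must carry the joint conditional law through the fragment-stripping step rather than just the survival probabilities, but as you note this is exactly what \eqref{assump1}--\eqref{assump2} provide once one checks that $\{T_1>t_1,\ldots,T_{m-1}>t_{m-1}\}$ is measurable with respect to the earlier fragments. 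I see no gap; the only points needing care in a full write-up are the ones you already flag (measurability of the conditioning event, the vanishing of summands with $t-\tau_{m-1}<0$, and the càdlàg/openness hypotheses ensuring $\{T>s\}$ is $\mathcal{F}_s$-measurable).
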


\begin{theorem}\label{theorem_avg}
Suppose that Assumption~\ref{A1} holds. Let $X(t)_{t \ge 0}$ be such that ${\mathcal L}(X(0)) = \rho$, and set $T = \inf\{t>0:X(t) \notin U\}$. Then in Algorithm~\ref{alg_gen}, in the discrete time case, 
\begin{equation*}
g_{par} := {\mathbb E}\left(\sum_{m=1}^{L} \sum_{t=0}^{T_m \wedge t_m-1} g(X_m(t))\right) = {\mathbb E}\left(\sum_{t=0}^{T-1}g(X(t))\right),
\end{equation*}
while in the continuous time case, 
\begin{equation*}
g_{par} := {\mathbb E}\left(\sum_{m=1}^{L} \int_0^{T_m \wedge t_m} g(X_m(t))\,dt\right) = {\mathbb E}\left(\int_0^{T}g(X(t))\,dt\right).
\end{equation*}
\end{theorem}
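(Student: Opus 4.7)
The plan is to express both sides of the claimed identity as the same explicit series, indexed by the fragment number $m$, and then invoke QSD invariance to match the two series term by term. I will describe the continuous time argument; the discrete time proof is identical with integrals replaced by sums. Throughout let $\tau_m = t_1 + \ldots + t_m$ and $p_k = {\mathbb P}(T > t_k \mid X(0) \sim \rho)$.

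First I would rewrite $g_{par}$ as an explicit series. Using $\sum_{m=1}^L = \sum_{m=1}^\infty {\mathbbm 1}_{\{L \ge m\}}$ and observing that $\{L \ge m\} = \{T_1 > t_1, \ldots, T_{m-1} > t_{m-1}\}$ depends only on the first $m-1$ fragments, I would apply Fubini and then combine~\eqref{assump1} (which forces $X_m$, and hence the path functional $\int_0^{T_m \wedge t_m} g(X_m(t))\,dt$, to be conditionally independent of the earlier fragments given $X_m(0)$) with~\eqref{assump2} (which identifies the conditional law of $X_m(0)$ given $\{L \ge m\}$ as $\rho$). Since each $X_m$ is a copy of $X$, this factors the $m$-th term as ${\mathbb P}(L \ge m)\cdot {\mathbb E}(\int_0^{T \wedge t_m} g(X(t))\,dt \mid X(0)\sim\rho)$. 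Iterating the same factorization on the probability gives ${\mathbb P}(L \ge m) = p_1 \cdots p_{m-1}$, so that
\[
g_{par} \;=\; \sum_{m=1}^\infty p_1 \cdots p_{m-1}\; {\mathbb E}\!\left(\int_0^{T \wedge t_m} g(X(t))\,dt \,\Big|\, X(0)\sim\rho\right).
\]

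Next I would decompose the right hand side analogously. With $X(0) \sim \rho$, split $\int_0^T g(X(t))\,dt = \sum_{m=1}^\infty \int_{\tau_{m-1}\wedge T}^{\tau_m \wedge T} g(X(t))\,dt$, so the $m$-th term is ${\mathbb E}({\mathbbm 1}_{\{T > \tau_{m-1}\}} \int_{\tau_{m-1}}^{\tau_m \wedge T} g(X(t))\,dt)$. By iterated application of the QSD identity~\eqref{QSDdef} at the deterministic times $\tau_1, \tau_2, \ldots$, the conditional law of $X(\tau_{m-1})$ given $\{T > \tau_{m-1}\}$ is $\rho$ and ${\mathbb P}(T > \tau_{m-1}) = p_1 \cdots p_{m-1}$. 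Then applying the strong Markov property at $\tau_{m-1}$ reduces the $m$-th term to $p_1 \cdots p_{m-1}\cdot {\mathbb E}(\int_0^{T \wedge t_m} g(X(s))\,ds \mid X(0)\sim\rho)$, which matches the $m$-th term of the expansion of $g_{par}$ above.

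Finally I would verify well-posedness. The QSD identity together with $\sum t_m = \infty$ and almost sure finiteness of $T$ under $\rho$ force $p_1 \cdots p_{m-1} \to 0$, so $L < \infty$ almost surely and the series converges absolutely with bound $\|g\|_\infty \sum_m p_1\cdots p_{m-1}\, t_m$; this also justifies the Fubini swap. The main obstacle, in my view, is the first step: carefully extracting the correct conditional independence from~\eqref{assump1}–\eqref{assump2}, in particular the observation that $T_m$ is a path functional of $X_m$ and so inherits the conditional independence of $X_m$ given $X_m(0)$, and that ``$X_m$ is a copy of $X$'' combined with conditioning on the event $\{L \ge m\}$ really does produce the law of $X$ started at $\rho$. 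Everything else is routine bookkeeping, and the discrete time case proceeds verbatim.
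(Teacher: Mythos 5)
Your argument is correct, and the first half (the expansion of $g_{par}$ over $m$ via $\{L\ge m\}=\{T_1>t_1,\ldots,T_{m-1}>t_{m-1}\}$, conditional independence from~\eqref{assump1}, and identification of ${\mathcal L}(X_m(0)\,|\,L\ge m)=\rho$ from~\eqref{assump2}) is essentially identical to the paper's~\eqref{eqabove2}--\eqref{same}; you correctly flag this as the delicate step. The second half genuinely diverges from the paper. The paper never decomposes the true trajectory: instead it proves the factorization ${\mathbb E}\bigl(\sum_{t=0}^{T\wedge r-1}g(X(t))\bigr)=\bigl(\int g\,d\rho\bigr){\mathbb E}(T\wedge r)$ (equation~\eqref{eqabove}, a direct consequence of~\eqref{QSDdef}), so that every fragment's contribution collapses to $\bigl(\int g\,d\rho\bigr){\mathbb E}(T_m\wedge t_m\,|\,L\ge m)$, and then invokes Theorem~\ref{theorem_dyn} to identify $\sum_m{\mathbb E}(T_m\wedge t_m\,|\,L\ge m){\mathbb P}(L\ge m)={\mathbb E}(T)$; both sides of the theorem then equal $\bigl(\int g\,d\rho\bigr){\mathbb E}(T)$. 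Your route splits the right-hand side at the deterministic cumulative times $\tau_m$ and matches the two series term by term using the Markov property and~\eqref{QSDdef} at $\tau_{m-1}$. What this buys is independence from Theorem~\ref{theorem_dyn} (the paper's proof needs ${\mathcal L}(T_{par})={\mathcal L}(T)$ as an input, yours does not), at the cost of invoking the Markov property of the original process at the times $\tau_{m-1}$ --- note these are deterministic, so the ordinary Markov property suffices and ``strong'' is not needed. Two small repairs: ${\mathbb P}(T>\tau_{m-1})=p_1\cdots p_{m-1}$ should be justified by the memorylessness of $T$ (Proposition~\ref{prop_exp_exit}) rather than only by iterating~\eqref{QSDdef}; and your dominating series $\|g\|_\infty\sum_m p_1\cdots p_{m-1}t_m$ can diverge for wildly growing $t_m$ (take $t_m$ comparable to $e^{\lambda\tau_{m-1}}$), so to justify Fubini you should instead bound the $m$-th term by $\|g\|_\infty\,p_1\cdots p_{m-1}{\mathbb E}(T\wedge t_m)=\|g\|_\infty(p_1\cdots p_{m-1}-p_1\cdots p_m)/\lambda$, which telescopes to $\|g\|_\infty{\mathbb E}(T)<\infty$.
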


Recall that the gain in ParRep is 
from parallel computations in the parallel 
step. In our trajectory 
fragment framework,
the basic idea is that the work 
to compute the fragments $X_m(t)_{0 \le t \le t_m}$ 
can be spread over multiple 
processors.  See~\cite{aristoff2015parallel,aristoff2014parallel,le2012mathematical,wang2018stationary} for related results 
in special cases. 
The parallel step is more 
efficient than direct, or serial, simulation 
provided the computational 
effort to sample the QSD is 
small relative to the effort 
to simulate an escape from $U$ 
via serial simulation.

We actually do not 
need to assume that $U$ is open and
that $X(t)_{t \ge 0}$ has the {strong} 
Markov property and 
c{\`a}dl{\`a}g paths to prove 
consistency of the parallel 
step in Algorithm~\ref{alg_gen}. 
Indeed, Algorithm~\ref{alg_gen}
is consistent in the
sense of Theorems~\ref{theorem_dyn}-\ref{theorem_avg}  
whenever $X(t)_{t \ge 0}$ is a time 
homogeneous Markov process and~\eqref{assump1}-\eqref{assump2} 
hold. 
However, to combine the parallel 
step with a decorrelation step 
to obtain a coarse dynamics or 
stationary average, 
we want c{\`a}dl{\`a}g paths 
to ensure the escape time from an 
open set $U$ is a 
stopping time, and we need 
the strong Markov property 
so that we can start afresh 
at these stopping times. 

\begin{figure}
\includegraphics[width=9.5cm]{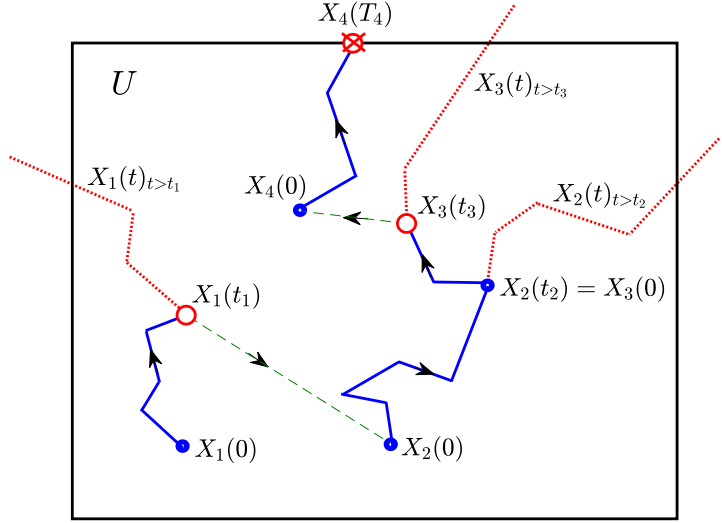}
\centering
\caption{Intuition behind the general parallel 
step of 
Algorithm~\ref{alg_gen}. Pictured 
are $L = 4$ 
trajectory fragments $(X_m(t)_{0 \le t \le t_m})_{m=1,2,3,4}$ combined 
to form one long 
trajectory,  
advancing in time in the direction 
indicated by the arrows. 
Its escape 
from $U$ is denoted with a cross.
The dashed line parts 
of the long trajectory are 
artificial and do not contribute 
to $g_{par}$ or $T_{par}$. Only 
the solid lines and solid dots 
along the long 
trajectory contribute 
to $g_{par}$ and $T_{par}$.
The dotted lines show the fragments'
irrelevant futures. Note 
that one of the fragments' 
starting point, $X_3(0)$, is equal 
to another fragment's terminal point, 
$X_2(t_2)$. There can be other 
fragments but they are not relevant 
to the parallel step in this 
example since $T_4 < t_4$.}
\label{fig_intuition}
\end{figure}

\section{Synchronous 
and asynchronous computing}\label{sec:asynch} 
Recall that the speedup in ParRep 
comes from computing the trajectory fragments 
$X_m(t)_{0 \le t \le t_m}$ partly 
or fully in parallel. These 
fragments must be {\em ordered}, 
via the index $m \ge 1$, to 
obtain the long trajectory 
pictured in Figure~\ref{fig_intuition}. 
Below, we explore two possible
ways to 
order the fragments, 
depending on whether 
we want to employ
{\em synchronous} 
or {\em asynchronous 
computing}. In the 
former case, we have 
in mind
a computing environment 
consisting of $R$ 
processors that are 
nearly synchronous. 
In the latter case 
we consider an 
arbitrary number of 
processors 
that potentially 
have widely different 
performance. 

Below, we will consider only 
fragments of constant time 
length, $t_m \equiv \Delta t$. 
For synchronous 
computing, following 
ideas
from~\cite{aristoff2015parallel,aristoff2014parallel,wang2018stationary}, we 
consider the 
ordering of trajectory 
fragments in Proposition~\ref{prop_synch} below. 

\begin{figure}
\includegraphics[width=12cm]{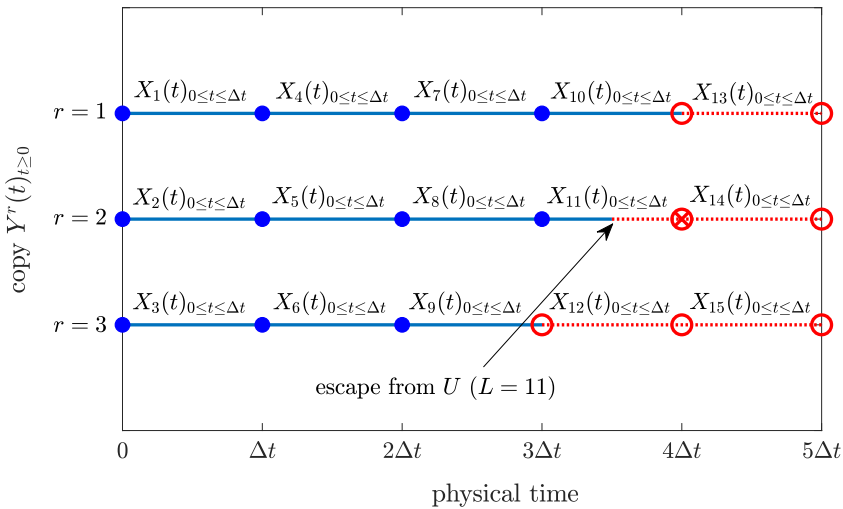}
\vskip-2pt
\caption{An example of a synchronous ParRep algorithm based on the ordering 
of fragments in Proposition~\ref{prop_synch}.
Solid dots and hollow circles correspond to fragments' initial 
and terminal points. The cross corresponds 
to the terminal 
point of the first fragment, 
in terms of the ordering, 
to escape from $U$. Times of copies that 
contribute to $g_{par}$ 
and $T_{par}$ are pictured with 
solid line segments and solid dots, while 
times that do not contribute are pictured with dotted lines and hollow circles.}
\label{fig_synchronous}
\end{figure}

\begin{proposition}[Synchronous computing]\label{prop_synch}
Suppose $Y^r(t)_{t \ge 0}$, 
$r=1,\ldots,R$, are independent 
copies of $X(t)_{t \ge 0}$ 
with ${\mathcal L}(X(0)) = \rho$. 
Let $m_k = \lfloor (k-1)/R\rfloor$ and $r_k = k-R\lfloor (k-1)/R\rfloor$, and for $k \ge 1$ define 
trajectory fragments $X_k(t)_{0 \le t \le \Delta t}$ by
\begin{equation}\label{frag1}
X_k(t) = Y^{r_k}(m_k\Delta t + t), \qquad 0 \le t \le \Delta t.
\end{equation}
Then Assumption~\ref{A1} holds with an appropriate 
definition of the trajectory
fragments' irrelevant 
futures. Thus the conclusions of Theorem~\ref{theorem_dyn} and~\ref{theorem_avg} hold.
\end{proposition}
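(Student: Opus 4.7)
The plan is to verify Assumption~\ref{A1} for the fragments defined in~\eqref{frag1} after specifying their irrelevant futures, and then invoke Theorems~\ref{theorem_dyn} and~\ref{theorem_avg}. The subtlety is that the values $X_k(t)$ for $t>\Delta t$ are not pinned down by~\eqref{frag1}, and the naive choice $X_k(t) = Y^{r_k}(m_k\Delta t + t)$ for all $t$ fails~\eqref{assump1}: two fragments sitting on the same processor, say $X_{R+1}$ and $X_{2R+1}$, would share arbitrarily long tails of $Y^1$, so $X_{2R+1}(\cdot)$ would not be conditionally independent of $X_{R+1}(\cdot)$ given $X_{2R+1}(0)$. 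I therefore introduce an auxiliary independent family of Markov continuations: for each $k$, given $Y^{r_k}((m_k+1)\Delta t)$, let $\tilde X_k$ be an independent copy of $X(\cdot)$ started at that point, and set $X_k(t) = Y^{r_k}(m_k\Delta t + t)$ for $t\le \Delta t$ and $X_k(t) = \tilde X_k(t-\Delta t)$ for $t>\Delta t$. By time homogeneity and the Markov property of $Y^{r_k}$, each $X_k(\cdot)$ is then a Markov process with the transition law of $X(\cdot)$ started at $X_k(0)$, and $T_k$ is its escape time from $U$.

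Next I would verify~\eqref{assump2}. The event $\{T_j>\Delta t\}$ is the event that $Y^{r_j}$ does not leave $U$ during the block $[m_j\Delta t,(m_j+1)\Delta t]$, so it is $\sigma(Y^{r_j})$-measurable. Using the bijection $k\leftrightarrow(m_k,r_k)$, the intersection $\bigcap_{j=1}^{m-1}\{T_j>\Delta t\}$ decouples across the $R$ processors; the piece involving processor $r_m$ reduces to $\{Y^{r_m}(s)\in U\ \forall\,s\in[0,m_m\Delta t]\}$, while the pieces for $r\ne r_m$ depend only on $Y^r$. Since $Y^1,\ldots,Y^R$ are independent, conditioning on the whole intersection has the same effect on $X_m(0)=Y^{r_m}(m_m\Delta t)$ as conditioning only on the $r_m$ part, and the QSD identity~\eqref{QSDdef} applied with $t=m_m\Delta t$ then yields $\mathcal{L}(X_m(0)\mid T_{m-1}>\Delta t,\ldots,T_1>\Delta t)=\rho$; the $m=1$ case is immediate from $Y^1(0)\sim\rho$.

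The verification of~\eqref{assump1} is analogous. Write $X_m(\cdot)$ as a measurable function of $Y^{r_m}$ restricted to $[m_m\Delta t,(m_m+1)\Delta t]$ together with $\tilde X_m$, and $(X_k(\cdot))_{k<m}$ as a measurable function of $Y^{r_m}$ restricted to $[0,m_m\Delta t]$, the processes $(Y^r)_{r\ne r_m}$, and $(\tilde X_k)_{k<m}$. Conditional on $X_m(0)=Y^{r_m}(m_m\Delta t)$, the Markov property of $Y^{r_m}$ decouples its past and future at $m_m\Delta t$, independence of the $Y^r$'s across $r$ is preserved under the conditioning, and the auxiliary randomness producing $\tilde X_m$ is independent of everything else. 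These three facts combine to give conditional independence of $X_m(\cdot)$ from $(X_k(\cdot))_{k<m}$.

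The hard part is conceptual rather than technical and lies in the first paragraph: one must recognize that continuing each fragment along its source processor spoils the conditional independence in~\eqref{assump1}, and that fresh independent Markov continuations are needed instead. Once this construction is fixed, Assumption~\ref{A1} reduces to routine applications of the Markov property of the $Y^r$'s, their mutual independence across $r$, and the QSD identity~\eqref{QSDdef}, after which Theorems~\ref{theorem_dyn} and~\ref{theorem_avg} apply directly to give the conclusion.
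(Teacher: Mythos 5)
Your proposal is correct and follows essentially the same route as the paper: independent Markov continuations for the irrelevant futures, the processor-wise decomposition of the survival event $\{T_{m-1}>\Delta t,\ldots,T_1>\Delta t\}$ into $\{\zeta^{r}>\cdot\}$ events that decouple by independence of the $Y^r$, the QSD identity applied at time $m_m\Delta t$ on processor $r_m$ to get~\eqref{assump2}, and the Markov property plus cross-processor independence for~\eqref{assump1}. Your explicit observation that the naive continuation along the source processor violates~\eqref{assump1} is a nice motivation the paper leaves implicit, but the construction and verification are the same.
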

Figure~\ref{fig_synchronous} shows 
the trajectory fragments defined in~\eqref{frag1}.

In asynchronous 
computing, perhaps the most natural ordering 
is the {\em wall-clock ordering}: 
the fragments are ordered 
according to the wall-clock 
time that their starting points 
are computed. See Figure~\ref{fig_asynchronous}.
When does the wall-clock ordering satisfy Assumption~\ref{A1}? 
Note that~\eqref{assump1} simply says that each 
fragment evolves forward in time independently 
of the preceding fragments and 
their irrelevant futures. 
This condition is easy to establish 
with
an appropriate choice of the 
irrelevant futures.
Ensuring~\eqref{assump2} holds is more subtle. We will 
show, however, that 
if the wall-clock time it takes 
to compute each fragment depends on processor 
variables, but not on the fragments themselves, 
then the wall-clock time ordering is independent of 
the fragments and~\eqref{assump2} holds.

We will distinguish 
between a wall-clock time and 
a {physical time}, where the former 
is self-explanatory and the
latter refers to the time index 
$t$ of a copy of $X(t)_{t \ge 0}$. 
Let $Y^r(t)_{t \ge 0}$ be independent 
copies of 
$X(t)_{t \ge 0}$ starting at 
independent samples of the 
QSD $\rho$. The wall-clock time 
ordering of fragments satisfies 
Assumption~\ref{A1} above if {\em (i)} the wall-clock times are independent 
of the physical times, 
{\em (ii)} the wall-clock time 
to compute copy $Y^r(t)_{t \ge 0}$ 
is an increasing function of 
the physical time, and 
{\em (iii)} two processors 
never finish at exactly the same 
wall-clock time, so that the wall-clock 
times can be given a 
unique ordering. Write $t_{wall}^r(m)$ 
for the wall-clock time it takes to 
compute $Y^r(t)_{t \ge 0}$ up to 
physical time $t = m\Delta t$. Proposition~\ref{prop_asynch} below makes 
the claims above precise:

\begin{figure}
\includegraphics[width=12cm]{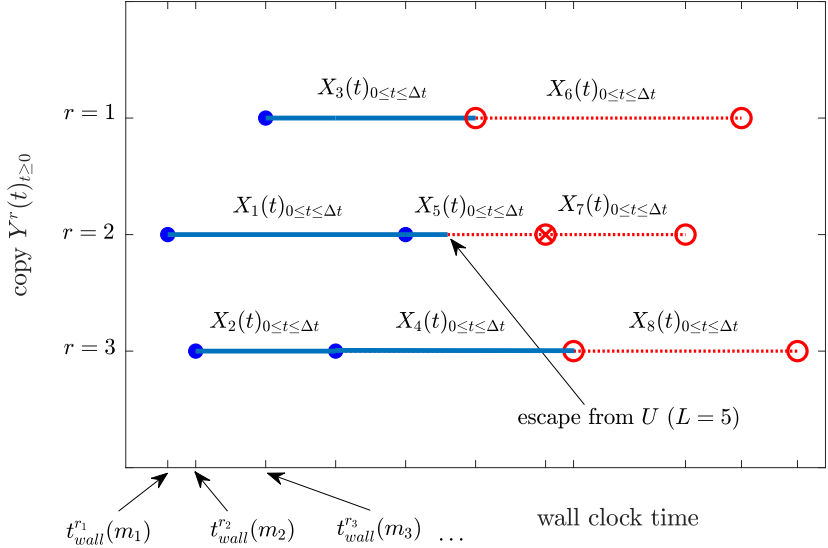}
\vskip-2pt
\caption{An example of an asynchronous ParRep algorithm based on wall-clock time ordering of fragments in 
Proposition~\ref{prop_asynch}. Solid dots and hollow circles correspond to fragments' initial 
and terminal points. The cross corresponds 
to the terminal 
point of the first fragment, 
in terms of the ordering, 
to escape from $U$. Times of copies that 
contribute to $g_{par}$ 
and $T_{par}$ are pictured with 
solid line segments and solid dots, while 
times that do not contribute are pictured with dotted lines and hollow circles.}
\label{fig_asynchronous}
\end{figure}

\begin{proposition}[Asynchronous computing]\label{prop_asynch} 
Suppose $Y^r(t)_{t \ge 0}$, 
$r=1,\ldots,R$, are independent 
copies of $X(t)_{t \ge 0}$ 
with ${\mathcal L}(X(0)) = \rho$. 
Assume $(t_{wall}^r(m)_{m \ge 0})^{1 \le r \le R}$ are nonnegative random numbers such that: 
\vskip5pt
\begin{itemize}
\item[(i)]
$(t_{wall}^r(m)_{m \ge 0})^{1 \le r \le R}$ is independent of $(Y^r(t)_{t \ge 0})^{1\le r \le R}$; 
\vskip2pt
\item[(ii)] Almost surely, $t_{wall}^r(m) \le t_{wall}^r(n)$ when 
$m \le n$ and $1 \le r \le R$;
 \vskip2pt
\item[(iii)] Almost surely, 
there is a unique sequence
 $(r_k,m_k)_{k \ge 1}$ such that:
 \begin{align*}
     &(r_k,m_k)_{k \ge 1} \text{ has range } \{1,\ldots,R\}\times\{0,1,2,\ldots\} 
     \qquad \text{(surjectivity)},\\
     &t_{wall}^{r_1}(m_1)<t_{wall}^{r_2}(m_2)<t_{wall}^{r_3}(m_3)<\ldots \qquad\qquad 
     \text{(monotonicity)}.
 \end{align*}
\end{itemize}
\vskip5pt
For $k \ge 1$ define trajectory fragments $X_k(t)_{0 \le t \le \Delta t}$ by
\begin{equation}\label{frag2}
X_k(t) = Y^{r_k}(m_k\Delta t + t), \qquad 0 \le t \le \Delta t.
\end{equation}
Then Assumption~\ref{A1} holds 
with an appropriate definition 
of the trajectory fragments' irrelevant futures.
Thus the conclusions of 
Theorems~\ref{theorem_dyn} and~\ref{theorem_avg} hold.
\end{proposition}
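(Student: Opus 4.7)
My plan is to define the irrelevant futures using fresh independent Markov processes and then verify the three requirements of Assumption~\ref{A1} -- that each $(X_k(t)_{t \ge 0}, T_k)$ is a copy of $(X(t)_{t \ge 0}, T)$, the conditional independence \eqref{assump1}, and the QSD conditioning \eqref{assump2} -- by first conditioning on a realization of the ordering $(r_k, m_k)_{k \ge 1}$. The key structural observation, from condition (i), will be that the ordering is a deterministic function of the wall-clock times and hence is independent of $(Y^r(t)_{t \ge 0})^{1 \le r \le R}$; so under such conditioning the $Y^r$'s retain their original joint law of independent copies of $X$ starting at $\rho$. A second observation, drawn from (ii) combined with the strict inequalities in (iii), will be that for each fixed $r$ the indices $m_k$ with $r_k = r$ appear in strictly increasing order along $k$; surjectivity then forces the pairs $(r, 0), (r, 1), \ldots, (r, j-1)$ to all appear at positions strictly before any index $k$ with $(r_k, m_k) = (r, j)$.

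For each $k$ I would attach to $X_k$ an irrelevant future defined by a fresh independent Markov process $\tilde X_k$ started at $X_k(\Delta t)$, with the $\tilde X_k$ mutually independent and independent of all other randomness given their attachment points. Conditioning on a deterministic realization of the ordering, the physical time $m_k \Delta t$ becomes deterministic; by the Markov property of $Y^{r_k}$ applied at that time, $Y^{r_k}(m_k \Delta t + \cdot)$ is a copy of $X$ starting at $Y^{r_k}(m_k \Delta t)$, and splicing its tail beyond $\Delta t$ with the fresh continuation preserves this law, giving $(X_k(t)_{t \ge 0}, T_k) \stackrel{\mathcal L}{=} (X(t)_{t \ge 0}, T)$ after integrating out the ordering. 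The conditional independence \eqref{assump1} will follow because, under the conditioned ordering, the preceding fragments $X_1, \ldots, X_{m-1}$ are built either from different $Y^{r'}$ (independent of $Y^{r_m}$) or from $Y^{r_m}$ on strictly earlier physical-time windows (independent of $Y^{r_m}(m_m \Delta t + \cdot)$ given $Y^{r_m}(m_m \Delta t)$, by the Markov property) together with their own independent fresh randomness.

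The hardest step will be verifying \eqref{assump2}, and the plan for it is as follows. Condition on a realization of the ordering with $(r_m, m_m) = (r, j)$. By the combinatorial observation above, the indices $k < m$ with $r_k = r$ are exactly those with $m_k \in \{0, 1, \ldots, j-1\}$, one such $k$ per value, and the event $\bigcap_{k < m,\, r_k = r}\{T_k > \Delta t\}$ coincides with the event that $Y^r$ does not exit $U$ on any of the sub-intervals $[i\Delta t, (i+1)\Delta t]$ for $i = 0, \ldots, j-1$, i.e., that $Y^r$ remains in $U$ throughout $[0, j\Delta t]$. Since $Y^r(0) \sim \rho$ and $\rho$ is a QSD, iterating the defining identity \eqref{QSDdef} $j$ times, with the Markov property inserted at each step $i\Delta t$, yields $Y^r(j\Delta t) = X_m(0) \sim \rho$ conditional on this non-escape event. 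The remaining conditioning events for indices $k < m$ with $r_k \ne r$ involve only the $Y^{r_k}$'s, which are independent of $Y^r$, and so do not alter this conditional distribution. Averaging over the law of the ordering (independent of all $Y^r$ by (i)) yields \eqref{assump2}, completing the verification of Assumption~\ref{A1}; the conclusions of Theorems~\ref{theorem_dyn} and~\ref{theorem_avg} then follow immediately.
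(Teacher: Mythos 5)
Your proposal is correct and follows essentially the same route as the paper's proof: the combinatorial identity that the earlier fragments on processor $r_k$ occupy exactly the physical-time windows $0,\ldots,m_k-1$ (so the conditioning event becomes non-escape of $Y^{r_k}$ on $[0,m_k\Delta t]$), the use of condition (i) to decouple the ordering from the trajectories, the QSD identity to get \eqref{assump2}, the Markov property plus independence across processors for \eqref{assump1}, and fresh independent continuations for the irrelevant futures. The only cosmetic differences are that you condition explicitly on a realization of the ordering where the paper invokes its conditional-independence lemma, and you iterate \eqref{QSDdef} over $j$ subintervals where a single application at time $m_k\Delta t$ suffices.
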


Figure~\ref{fig_asynchronous} 
shows the trajectory fragments $X_m(t)_{0 \le t \le t_m}$ defined in~\eqref{frag2}. 

Assumptions {\em (ii)} and {\em (iii)} are 
quite natural, but assumption
{\em (i)} can 
fail in many very ordinary settings. We 
sketch an example explaining 
how this could happen. 
Suppose
$U = (0,1) \subseteq {\mathbb R}$ 
and say $X(t)_{t \ge 0}$ obeys some one dimensional stochastic differential equation. 
Suppose we use an integrator 
for $X(t)_{t \ge 0}$ that is 
slow near $0$ but fast near 
$1$. Then the wall-clock ordering 
will likely put trajectory fragments 
that are near $1$ 
ahead of those near $0$. This 
bias in the ordering 
would in turn create a bias 
toward escaping through $1$: 
in Algorithm~\ref{alg_gen}, we 
would expect that ${\mathbb P}(X_{par}= 1) > {\mathbb P}(X_T = 1)$, where $X_T$ is the correct escape point. We 
construct a specific 
example demonstrating 
this bias in Remark~\ref{rmk_bias} 
in Section~\ref{sec:proofs2} below.

The speed of integrators does 
commonly depend on position in 
space, particularly when 
the time step varies to account 
for numerical stiffness~\cite{talay1994numerical}. This is an important caveat 
to keep in mind for asynchronous 
algorithms. This  
issue has not been explored 
much in the literature; see 
however brief discussions in~\cite{le2012mathematical} and~\cite{perez2017parsplice}.

The setting of Parsplice~\cite{perez2017parsplice,perez2018long,swinburne2018self} is slightly 
different from the above. In ParSplice, 
a {\em splicer} tells a {\em producer} 
to generate fragments among several 
metastable sets. The splicer 
distributes the fragments 
according to where it speculates 
that they will be needed.
These 
fragments are given a 
label
as soon as they are assigned, 
and this label never changes. The 
labels are assigned 
in wall-clock time 
order. Thus label 
$i$ is less than label $j$ if and only if 
the splicer tells 
the producer to generate
fragment $i$ before it 
tells the producer
to generate fragment $j$.
When the splicer tells the producer 
to generate a fragment 
in a particular metastable 
set $U$, it takes as 
its starting point the 
terminal point of the fragment 
in $U$ with the smallest label. 
Crucially, this label 
is smallest among {\em all} fragments 
in $U$ and not just among 
fragments in $U$ which have been 
fully computed at the current 
wall-clock time. 
Thus, the ordering of fragments 
in $U$, fixed by the splicer, can 
be seen as independent of 
the fragments themselves, 
and the arguments above demonstrate 
consistency of ParSplice.

\section{PDMPs}\label{sec:PDMPs}

The remainder of this article will focus on 
applying our ideas above to PDMPs. 
We begin with a brief informal
description of PDMPs. 
A PDMP is a 
c\`adl\`ag process consisting 
of a deterministic dynamics 
interrupted by jumps at 
random times; formally, a PDMP in ${\mathbb R}^d$ has a generator $L$ of the form
\begin{equation}\label{PDMP_gen}
Lf(z) = \partial_{\Gamma(z)} f(z) + \lambda(z) \int (f(z')-f(z))Q(z,dz')
\end{equation}
acting on suitable $f:{\mathbb R}^d \to {\mathbb R}$. Here $\lambda(z)$ is
the jump rate at $z \in {\mathbb R}^d$, a Markov kernel
$Q(z,dz')$ describes the jump distribution, and $\Gamma$ defines 
the deterministic flow $$\partial_t \psi(t,z) = \Gamma(\psi(t,z)),\qquad \psi(0,z) = z.$$

Write $\theta_0 + \ldots + \theta_{n-1}$ for the $n$th jump time, so that
$\theta_{n-1}$ is the 
holding time before the $n$th  
jump, and 
write $\xi_n$ for the position immediately after the 
$n$th jump, with $\xi_0$ the initial position. Then 
the PDMP generated by~\eqref{PDMP_gen} is described by $\psi$ together 
with 
$(\xi_n,\theta_n)_{n \ge 0}$; 
we call the latter the {\em skeleton chain} of 
$Z(t)_{t \ge 0}$. Note that the 
skeleton chain is a
time homogeneous Markov chain.

For convenience we describe a 
way to simulate a PDMP described 
by~\eqref{PDMP_gen} in
Algorithm~\ref{alg1} below. In 
the algorithm, we 
abuse notation by writing 
$\theta_n$, $\xi_n$, and 
$Z(t)$ for particular 
realizations of these 
random objects.

\begin{algorithm}
\caption{Simulating a PDMP}
Starting from an initial point $\xi_0$ and time $t= 0$, set $n = 0$, and iterate:

{\bf 1.} Sample $\theta_n$ according to the distribution 
\begin{equation}\label{time_dist}
{\mathbb P}(\theta_n > r) = \exp\left(-\int_0^r \lambda(\psi(s,\xi_{n}))\,ds\right).
\end{equation}

{\bf 2.} Set $Z(t+s) = \psi(s,\xi_{n})$ for $s \in [0,\theta_n)$, and sample $\xi_{n+1}$ 
from $Q(Z(t+\theta_n^-),dz)$.

{\bf 3.} Update $t \leftarrow t+\theta_n$ and then $n \leftarrow n+1$. Then return to Step 1.

Steps 1-3 above define a realization of $Z(t)_{t \ge 0}$ with skeleton chain $(\xi_n,\theta_n)_{n\ge 0}$.\label{alg1}
\end{algorithm}

Sampling the times $\theta_n$ is a nontrivial task, but there 
are efficient methods 
based on Poisson 
thinning~\cite{bierkens2016zig,vanetti2017piecewise} and 
identifying certain critical points along the flow direction~\cite{harland2017event}. See also~\cite{vanetti2017piecewise} for other 
methods to simulate a PDMP, including some 
based on time discretization. 
We will 
always assume our initial 
points $(\xi_0,\theta_0)$ 
are chosen so that $\theta_0$ 
satisfies~\eqref{time_dist} 
for $n = 0$, so that we can skip the first step in 
Algorithm~\ref{alg1}.


\subsection{Example: linear flow}

Consider a PDMP with deterministic paths that are lines in ${\mathbb R}^{d-1}$ corresponding 
to a finite collection of velocity vectors $d_i \in {\mathbb R}^{d-1}$, $i \in {\mathcal I} \subseteq {\mathbb N}$. Its generator $L$ is defined on suitable functions $f:{\mathbb R}^{d-1} \times {\mathcal I} \to {\mathbb R}$ by
\begin{equation}\label{gen}
Lf(x,i) = d_i \cdot  \nabla f(x,i) + \sum_{j\ne i} \lambda_j(x,i)(f(x,j)-f(x,i)),
\end{equation}
where $\lambda_j(x,i)\ge 0$ for $j \ne i$. Suppose we want to sample the probability density
\begin{equation}\label{Boltzmann}
Z^{-1}e^{-V(x)},\qquad Z = \int e^{- V(x)}\,dx,
\end{equation}
where $V:{\mathbb R}^{d-1} \to {\mathbb R}$ is 
smooth and grows 
sufficiently fast at $\infty$ 
so that $Z<\infty$. For the PDMP generated by~\eqref{gen} to have an invariant probability density independent of $i$ and proportional to~\eqref{Boltzmann}, 
the jump rates must satisfy
\begin{equation}\label{ratebalance}
\sum_{j\ne i} \left(\lambda_i(x,j) - \lambda_j(x,i)\right) = -  d_i \cdot \nabla  V(x).
\end{equation}
See Remark~\ref{prop_balance} in Section~\ref{sec:proofs2} below, 
and~\cite{bierkens2016zig} for a 
similar calculation.

Event 
Chain Monte Carlo and the Zig-Zag process fit into this 
framework. And while these methods 
were designed for efficient sampling, 
we argue that they may be 
limited in certain situations. 
To see why, note that~\eqref{ratebalance} says that at $x$, the rate into $d_i$ minus the rate out of $d_i$ equals minus 
the gradient of $V$ in direction $d_i$. Thus the PDMP is likely to change 
directions when it moves up a steep slope of $V$. This suggests the PDMP 
can struggle to escape from a 
basin of attraction of $V$, 
defined as the set 
of initial conditions $x(0)$ for which $dx(t)/dt = -\nabla V(x(t))$ has a unique long-time limit.

\section{ParRep for PDMPs}\label{sec:parrep_pdmp}

In this section $Z(t)_{t \ge 0}$ is 
a PDMP with stationary 
distribution $\pi$, and 
$f$ is a real-valued function 
defined on the state space 
of $Z(t)_{t \ge 0}$.
Below we outline some 
ParRep algorithms 
for estimating coarse dynamics 
as well as stationary averages, 
with a focus on the latter. The 
stationary average of $f$ is 
\begin{equation}\label{ergodic}
\langle f \rangle = \int f(x)\pi(dx).
\end{equation}

Algorithms~\ref{alg_main}  and~\ref{alg_main_dyn} 
below are ParRep algorithms based on the skeleton chain 
and the continuous time PDMP, respectively. 
Algorithms~\ref{alg_parstep1} and~\ref{alg_parstep3} 
are parallel steps for synchronous 
computing,
while 
Algorithms~\ref{alg_parstep2} and~\ref{alg_parstep4} 
are for asynchronous computing. 
Algorithms~\ref{alg_parstep1} and~\ref{alg_parstep3}, 
which are essentially 
extensions to PDMPs of 
algorithms recently proposed 
for continuous time Markov chains~\cite{wang2017parallel,wang2018stationary}, use the ordering 
of trajectory fragments defined in 
Proposition~\ref{prop_synch}. 
Algorithms~\ref{alg_parstep2} and~\ref{alg_parstep4} employ the 
wall-clock time ordering of 
fragments from Proposition~\ref{prop_asynch}. 
We prove consistency 
of all of our parallel steps 
via our trajectory 
fragment framework.

We do not 
attempt to prove existence, uniqueness 
or convergence to the QSD for general PDMPs. Instead we 
refer the reader 
to recent articles~\cite{champagnat2016exponential,champagnat2017general} 
for conditions which 
ensure convergence 
to a unique QSD. 
From those works,
under appropriate assumptions,
one can establish  convergence 
to a unique 
QSD in $D \times {\mathcal I} \subseteq {\mathbb R}^{d}$
for a PDMP generated by~\eqref{gen}.
For 
instance, exponential 
convergence 
is guaranteed if $D\subseteq {\mathbb R}^{d-1}$ is an open
connected bounded domain 
and there exist 
$m,M$ so that $0<m\le \lambda_j(x,i)\le M$ for 
all $x \in D$ and $i,j \in {\mathcal I}$: see~\cite{champagnat2016exponential}, pg. 261.
Similar arguments can be made for the 
QSD of the skeleton chain.
Even without theoretical 
guarantees, in practice, 
one can empirically 
validate convergence 
to the QSD using 
certain diagnostics; 
see for instance~\cite{binder2015generalized}.

\subsection{Skeleton chain-based parrep algorithm}\label{sec:stat_skel}

Let ${\mathcal W}$ be the 
collection of metastable sets for 
the skeleton chain 
$(\xi_n,\theta_n)_{n \ge 0}$. For 
instance, if
$Z(t)_{t \ge 0}$ has generator 
similar to the form~\eqref{gen} 
and we want to sample from 
the distribution~\eqref{Boltzmann},
it is natural to define ${\mathcal W}$ 
in terms of basins of attraction of $V$, 
in which case elements of ${\mathcal W}$ may be identified on the 
fly by gradient descent~\cite{voter1998parallel,voter2012accelerated}. See Section~\ref{sec:numerics} for an example of metastable sets defined this way.
\begin{assumption}\label{QSD_skel}
$(\xi_n,\theta_n)_{n \ge 0}$
has a QSD $\nu = \nu_W$ in each $W\in {\mathcal W}$ satisfying 
\begin{equation*}
\nu(A) = \lim_{n \to \infty} {\mathbb P}((\xi_n,\theta_n) \in A| (\xi_m,\theta_m) \in W,\,0\le m \le n) \quad \forall \,A \subseteq W.
\end{equation*}
\end{assumption}

For simpler notation, we
do not explicitly indicate the dependence of $\nu$ on $W$.

\begin{algorithm}
\caption{Synchronous skeleton chain parallel step in $W$} 
{\bf 1.} Generate iid samples
$(\xi_0^r,\theta_0^r)^{r=1,\ldots,R}$ from the 
QSD $\nu$ in $W$. Using these as starting points, 
independently evolve $R$ copies $((\xi_n^r,\theta_n^r)_{n \ge 0})^{r=1,\ldots,R}$ of the skeleton chain. 

{\bf 2.} Let $N = \inf\{n : \exists\,r\,s.t.\,(\xi_n^r,\theta_n^r) \notin W\}$, 
$J = \min\{r :(\xi_N^r,\theta_N^r) \notin W\}$, and define
\begin{equation*}
f_{par} =  \sum_{n=0}^{N-2}\sum_{r=1}^R \int_0^{\theta_n^r}f(\psi(t,\xi_{n}^r))\,dt + \sum_{r=1}^J \int_0^{\theta_{N-1}^r}f(\psi(t,\xi_{N-1}^r))\,dt
\end{equation*}
and $T_{par} = {\mathbbm{1}}_{par}$ by using the same formula but with ${\mathbbm{1}}(z) \equiv 1$ in place of $f$. Set $$(\xi_{par},\theta_{par}) = (\xi_N^J,\theta_N^J).$$
Once $f_{par}$, $T_{par}$ and $(\xi_{par},\theta_{par})$ can be computed, the parallel step is complete.\label{alg_parstep1} 
\end{algorithm}

\begin{algorithm}
\caption{Asynchronous skeleton chain parallel step in $W$}

{\bf 1.} Generate iid samples
$(\xi_0^r,\theta_0^r)^{r=1,\ldots,R}$ from the 
QSD $\nu$ in $W$. Using these as starting points, 
independently evolve $R$ copies $((\xi_n^r,\theta_n^r)_{n \ge 0})^{r=1,\ldots,R}$ of the skeleton chain.

{\bf 2.} Reorder these 
skeleton chain points in 
the 
order they are computed in wall-clock time, {\em i.e.} as $(\xi_{m_k}^{r_k},\theta_{m_k}^{r_k})_{k\ge 1}$ where $t_{wall}^{r_1}(m_1)\le t_{wall}^{r_2}(m_2) \le \ldots$ 
and $t_{wall}^r(n)$ is
the wall-clock time it takes to compute the skeleton chain $(\xi_m^r,\theta_m^r)_{m \ge 0}$ up to physical time $m = n$.
Set $\sigma^r = \inf\{m :(\xi_m^r,\theta_m^r) \notin W\}$, $K = \inf\{k:\sigma^{r_k} \le m_k+1\}$, and 
\begin{equation*}
f_{par} = \sum_{k=1}^{K} \int_0^{\theta_{m_k}^{r_k}}f(\psi(t,\xi_{m_{k}}^{r_k}))\,dt.
\end{equation*}
Define $T_{par} = {\mathbbm{1}}_{par}$ by using the same formula but with ${\mathbbm{1}}(z) \equiv 1$ in place of $f$. Let $$(\xi_{par},\theta_{par}) = (\xi_{\sigma^{r_K}}^{r_K},\theta_{\sigma^{r_K}}^{r_K}).$$
Once $f_{par}$, $T_{par}$ and $(\xi_{par},\theta_{par})$ can be computed, the parallel step is complete.\label{alg_parstep2}
\end{algorithm}

Algorithms~\ref{alg_parstep1} and~\ref{alg_parstep2} are {parallel steps} 
designed for synchronous 
and asynchronous computing, respectively. See Figure~\ref{fig_parstep1} 
for a diagram of both parallel steps. 
The first step in Algorithm~\ref{alg_parstep1} and 
Algorithm~\ref{alg_parstep2} -- 
called {\em dephasing} in the literature~\cite{aristoff2015parallel,aristoff2014parallel,voter1998parallel,wang2018stationary} -- involves generating 
$R$ independent samples from the QSD $\nu$ in $W$. 
These QSD samples may be 
obtained in a variety of ways. One option is to do rejection sampling 
using independent copies of the skeleton chain: whenever a copy 
escapes from $W$, start it afresh in $W$ until each
copy has remained in $W$ for a long enough consecutive time. Another possibility 
is based on the Fleming-Viot branching process~\cite{binder2015generalized,ferrari2007quasi}: when a copy escapes from $W$, 
restart it at the current position of a 
copy still in $W$ chosen at random. For more discussion see~\cite{binder2015generalized,simpson2013numerical,voter1998parallel}.

\begin{figure}
\subfigure{\includegraphics[width=6.45cm]{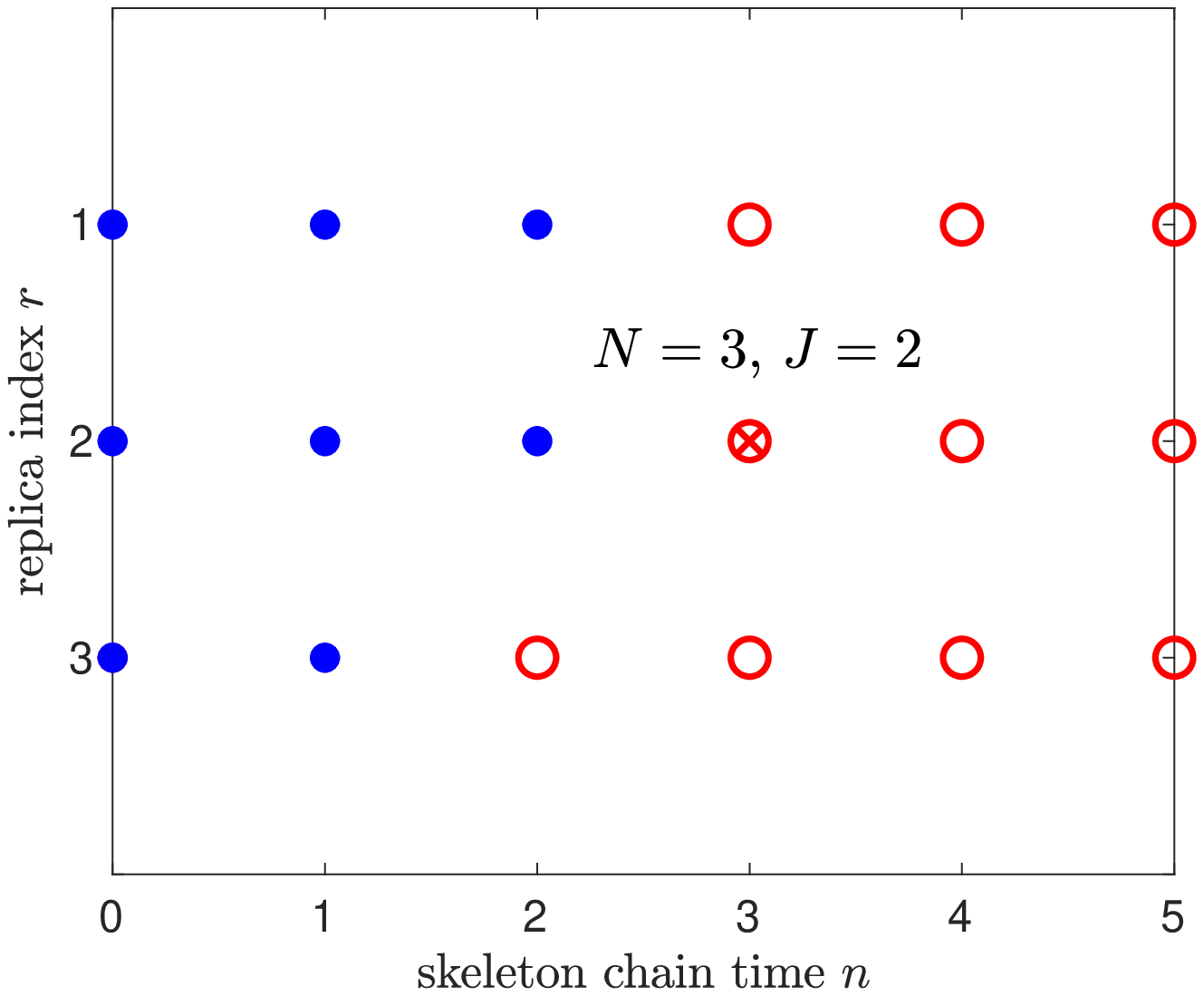}}
\subfigure{\includegraphics[width=6.45cm]{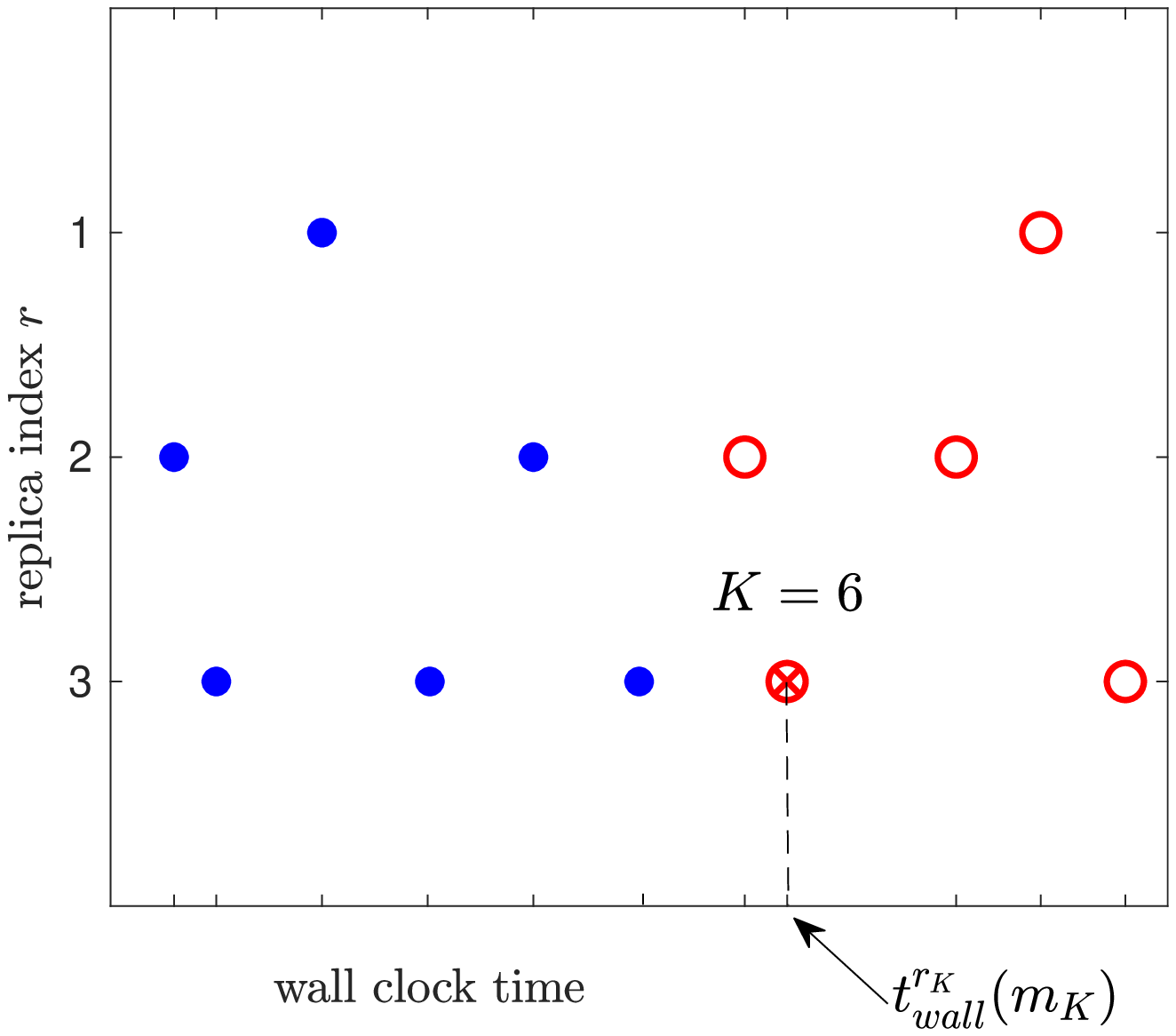}}

\caption{Illustration of the parallel steps used in Algorithm~\ref{alg_main}. The number of copies, or parallel replicas, is $R = 3$. 
The cross indicates an escape from $W$. 
Time steps of copies 
of the skeleton chain that contribute 
to $f_{par}$ and $T_{par}$ are pictured 
with solid dots, while time steps that 
do not contribute are pictured with 
hollow circles. 
Left: The 
synchronous parallel 
step, Algorithm~\ref{alg_parstep1}. 
Copy 
$r = 2$ escapes from $W$ 
at skeleton chain time $3$. 
In this example $N = 3$ and $J = 2$.
Right: The 
asynchronous parallel 
step, Algorithm~\ref{alg_parstep2}. 
Copy $r=3$ 
is the first to escape from $W$ in terms 
of the wall-clock time ordering. It escapes at 
wall-clock time $t_{wall}^{r_K}(m_K)$. In this 
example $K = 6$.}
\label{fig_parstep1}
\end{figure}

Recall the speedup from ParRep comes from 
the parallel step. The speedup -- 
the factor by which ParRep reduces the wall-clock computation time, compared to 
serial simulation of a trajectory 
of the same physical time -- can be a factor of up to $R$, the number of copies or replicas~\cite{aristoff2015parallel,aristoff2014parallel, le2012mathematical,voter1998parallel,wang2018stationary}, 
when Algorithm~\ref{alg1} is used to simulate the skeleton chain. See Figure~\ref{fig1}.
The parallel step is consistent no matter the 
choice of $W$, but if $W$ is not 
metastable there 
may be no gain in efficiency, as too much computation time 
will be spent sampling the QSD.

\begin{algorithm}
\caption{Skeleton chain computation of stationary averages}
Choose an initial point $(\xi_0,\theta_0)$, set $f_{sim} = 0$, 
$T_{sim} = 0$, and iterate:
\vskip5pt

{\bf 1.} Starting at $(\xi_0,\theta_0)$, evolve $(\xi_n,\theta_n)_{n\ge 0}$ forward in time, stopping at time 
\begin{equation*}
L = \inf\{n \ge T_{corr}^\nu(W)-1:\exists\,W\in {\mathcal W}\,s.t.\,(\xi_{n-k},\theta_{n-k}) \in W,\,k=0,\ldots,T_{corr}^\nu(W)-1\},
\end{equation*}
the first time it remains in some $W \in {\mathcal W}$ for 
$T_{corr}^\nu(W)$ 
consecutive time steps. Set 
\begin{equation*}
f_{decorr} = \sum_{n=0}^{L-1} \int_0^{\theta_{n}}f(\psi(t,\xi_{n}))\,dt 
\end{equation*}
and $T_{decorr} = {\mathbbm{1}}_{decorr}$ using the same formula. Store this $W$ for Step 2 and update $$f_{sim} \leftarrow f_{sim} + f_{decorr}, \qquad T_{sim} \leftarrow T_{sim} + T_{decorr}.$$

{\bf 2.} Run the parallel step (Algorithm~\ref{alg_parstep1} or~\ref{alg_parstep2}) in the set $W$ from Step 1. Update
$f_{sim} \leftarrow f_{sim} + f_{par}$, $T_{sim} \leftarrow T_{sim} + T_{par}$, 
$(\xi_0,\theta_0) = (\xi_{par},\theta_{par})$, and return to Step 1.
\vskip5pt

The algorithm stops when $T_{sim}$ exceeds a user-chosen threshold $T_{stop}$. At this time, 
\begin{equation*}
\langle f \rangle \approx \frac{f_{sim}}{T_{sim}}
\end{equation*}
is our estimate of the stationary average~\eqref{ergodic}. \label{alg_main}
\end{algorithm}

Theorem~\ref{thm_consistent} 
gives conditions that establish 
consistency of  Algorithm~\ref{alg_parstep1} and~\ref{alg_parstep2}. 
For Algorithm~\ref{alg_parstep2},
the crucial condition is that the wall-clock times it takes 
for the processors to compute steps of the 
skeleton chains are independent of those 
chains. Whether this holds true will depend on the algorithm used to simulate the PDMP. 
If it is a time discretization-based algorithm, or an implementation 
of Algorithm~\ref{alg1} based 
on Poisson thinning, then the computational effort to 
obtain one step of the skeleton 
chain can be larger in regions 
in state space with lower jump rates.
For CTMCs simulated via the SSA/Gillespie  
algorithm~\cite{anderson2011continuous}, the effort 
to simulate one step of the 
skeleton chain may be
essentially independent 
of the position of the chain. 

\begin{theorem}[Consistency of the parallel steps  Algorithm~\ref{alg_parstep1} and~\ref{alg_parstep2}]\label{thm_consistent}
\hskip150pt
\vskip2pt
\noindent (i) Let $\nu$ be the QSD of $(\xi_n,\theta_n)_{n \ge 0}$ in some $W\in {\mathcal W}$, suppose that ${\mathcal L}(\xi_0,\theta_0) = \nu$, and define $M = \inf\{n\ge 0:(\xi_n,\theta_n) \notin W\}$. Then in Algorithm~\ref{alg_parstep1},
\begin{equation}\label{avg_correct2}
{\mathbb E}(f_{par}) = {\mathbb E}\left(\sum_{n=0}^{M-1} \int_0^{\theta_n}f(\psi(t,\xi_{n}))\,dt \right)
\end{equation} 
and
\begin{equation}\label{exit_correct2}
 (\xi_{par},\theta_{par}) \stackrel{\mathcal L}{=} (\xi_M,\theta_M).
\end{equation}
\vskip5pt
\noindent (ii) 
Suppose  
$(t_{wall}(m)_{m \ge 0})^{1 \le r \le R}$, the wall-clock times from Algorithm~\ref{alg_parstep2}, satisfy
the assumptions 
of Proposition~\ref{prop_asynch} when $(Y^r(t)_{t \ge 0})^{r=1,\ldots,R}$ 
equals $((\xi_n^r,\theta_n^r)_{n \ge 0})^{r=1,\ldots,R}$. Adopt the 
assumptions in (i) above. Then~\eqref{avg_correct2}-\eqref{exit_correct2} hold.
\end{theorem}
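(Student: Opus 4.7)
The plan is to recognize Algorithms~\ref{alg_parstep1} and~\ref{alg_parstep2} as instances of the general parallel step of Algorithm~\ref{alg_gen} applied to the skeleton chain $(\xi_n,\theta_n)_{n\ge 0}$, and then to invoke Theorems~\ref{theorem_dyn}--\ref{theorem_avg} together with Propositions~\ref{prop_synch}--\ref{prop_asynch}. The skeleton chain is a time-homogeneous discrete-time Markov chain by construction, so these results apply with $\Delta t = 1$ once we identify the correct ``observable'' to integrate. The natural choice is the bounded measurable function
\begin{equation*}
g(\xi,\theta) \;=\; \int_0^{\theta} f(\psi(t,\xi))\,dt,
\end{equation*}
so that $g(\xi_n,\theta_n)$ records the contribution of the $n$th skeleton step to the continuous-time average of $f$ along the PDMP.

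For part (i), I would apply Proposition~\ref{prop_synch} with $Y^r = (\xi_n^r,\theta_n^r)_{n\ge 0}$, obtaining trajectory fragments $X_k$ of unit length with $X_k(0) = (\xi_{m_k}^{r_k},\theta_{m_k}^{r_k})$ where $m_k = \lfloor(k-1)/R\rfloor$ and $r_k = k - Rm_k$. Since $t_k \equiv 1$ in discrete time, the escape condition $T_k \le t_k$ in Algorithm~\ref{alg_gen} becomes $(\xi_{m_k+1}^{r_k},\theta_{m_k+1}^{r_k})\notin W$, and an index-chasing argument using minimality of $N$ and $J$ shows that $L = R(N-1)+J$. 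Then Theorem~\ref{theorem_avg} yields
\begin{equation*}
{\mathbb E}(f_{par}) \;=\; {\mathbb E}\!\left(\sum_{k=1}^{L} g(X_k(0))\right) \;=\; {\mathbb E}\!\left(\sum_{n=0}^{M-1}\int_0^{\theta_n} f(\psi(t,\xi_n))\,dt\right),
\end{equation*}
which is~\eqref{avg_correct2}, while Theorem~\ref{theorem_dyn} applied to $X_{par} = X_L(T_L) = (\xi_N^J,\theta_N^J)$ gives~\eqref{exit_correct2}. The same identification with $f \equiv 1$ inside $g$ yields the stated result for $T_{par}$.

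For part (ii), the argument is identical except that I would invoke Proposition~\ref{prop_asynch} in place of Proposition~\ref{prop_synch}; the hypothesis on $(t_{wall}^r(m))$ is exactly what is assumed. Fragments are then ordered by wall-clock time, so $X_k(0) = (\xi_{m_k}^{r_k},\theta_{m_k}^{r_k})$ with the $(r_k,m_k)$ from assumption~(iii) of Proposition~\ref{prop_asynch}, and $L$ coincides with $K$ from Algorithm~\ref{alg_parstep2} by the definition $K = \inf\{k:\sigma^{r_k}\le m_k+1\}$. The invocation of Theorems~\ref{theorem_dyn}--\ref{theorem_avg} then produces the two identities.

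The main obstacle is the bookkeeping that translates the double-indexed notation of Algorithms~\ref{alg_parstep1}--\ref{alg_parstep2} into the linearly-indexed fragment notation of Algorithm~\ref{alg_gen}, especially the verification that $L$ from the general framework really does equal $R(N-1)+J$ in the synchronous case and $K$ in the asynchronous case. Once this correspondence is established, everything reduces to direct application of previously proven results; no additional probabilistic content is required. I would also remark briefly that $g$ is bounded (since $f$ is bounded and $\theta$ has finite conditional mean under the QSD, so $g$ is integrable against the relevant laws) to justify the use of Theorem~\ref{theorem_avg}.
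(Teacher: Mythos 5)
Your proposal is correct and follows essentially the same route as the paper, whose entire proof is the one-line reduction to Propositions~\ref{prop_synch} and~\ref{prop_asynch} with the skeleton chain in place of $Y^r$, $t_m\equiv\Delta t=1$, and $g(\xi,\theta)=\int_0^\theta f(\psi(t,\xi))\,dt$. The index bookkeeping you supply ($L=R(N-1)+J$ synchronously, $L=K$ asynchronously) is correct and is in fact more explicit than what the paper records.
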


The times $T_{corr}^\nu(W)$ in Algorithm~\ref{alg_main} may be 
 chosen on the fly, or they may be 
 set at the beginning of simulations. Choosing an appropriate value may be done using various
 convergence diagnostics or 
 a priori information; see~\cite{binder2015generalized,simpson2013numerical,voter1998parallel} for details.

Consistency of 
the parallel steps, 
together with exactness 
of the decorrelation step,
show that Algorithm~\ref{alg_main} 
produces correct stationary 
averages, provided 
some mild recurrence 
assumptions hold~\cite{aristoff2015parallel}. The 
reason is 
essentially the law of 
large numbers: for 
computations of stationary 
averages, due 
to repeated visits 
to each metastable set, 
in the parallel steps
it is enough to get contributions
$f_{par}$ 
to $f_{sim}$ with
the correct {\em average} value 
along with
escape events with 
the correct law.

We do not attempt here to prove 
ergodicity using this argument, 
but mention it
has been studied previously 
in~\cite{aristoff2015parallel,wang2018stationary}. 
Our numerical simulations in 
Section~\ref{sec:numerics} below 
also support its validity. One interesting aspect of the 
parallel step is that  
the averaging over independent copies or replicas  
can be considered a bonus, as it likely lowers 
the variance of the estimate 
$f_{sim}/T_{sim} \approx \langle f \rangle$ of the stationary average, compared to an estimate
from a serial trajectory of physical 
time length~$T_{sim}$.

\subsection{Continuous time PDMP-based algorithm}\label{sec:stat_PDMP}

Let ${\mathcal V}$ be the 
collection of metastable sets for 
$Z(t)_{t \ge 0}$. As above, if
$Z(t)_{t \ge 0}$ has a generator 
similar to~\eqref{gen} 
and we want to sample 
from the distribution~\eqref{Boltzmann}, 
the elements of ${\mathcal V}$ 
can be defined in terms of the basins of attraction of $V$. 
We will require a 
time interval 
$\Delta t>0$, which 
is not necessarily 
a time step for discretizing the PDMP. 
For instance, $\Delta t$ could 
be a polling time for resynchronizing parallel processors. 

\begin{algorithm}
\caption{Synchronous continuous time parallel step in $W$}  
{\bf 1.} Generate iid samples
$Z^r(0)^{r=1,\ldots,R}$ from the 
QSD $\mu$ in $W$. Using these as starting points, 
independently evolve $R$ copies $(Z^r(t)_{t \ge 0})^{r=1,\ldots,R}$ of the PDMP. 

{\bf 2.} Let $\tau^r = \inf\{t:Z^r(t)\notin W\}$, set 
$$N = \inf\{n \in {\mathbb N}:\exists\,r\,s.t.\,\tau^r \le n \Delta t\}, \qquad J = \min\{r:\tau^r \le N \Delta t\},$$ and define
\begin{align*}
f_{par} &=  \sum_{n=1}^{N-1}\sum_{r=1}^R \int_{(n-1)\Delta t}^{n\Delta t}f(Z^r(t))\,dt \\
&\qquad \qquad  + \sum_{r=1}^{J-1} \int_{(N-1)\Delta t}^{N\Delta t}f(Z^r(t))\,dt 
+ \int_{(N-1)\Delta t}^{\tau^J} f(Z^J(t))\,dt,
\end{align*}
and $T_{par} = {\mathbbm{1}}_{par}$ by using the same formula but with ${\mathbbm{1}}(z) \equiv 1$ in place of $f$. Set $$Z_{par} = Z^J(\tau^J).$$
Once $f_{par}$, $T_{par}$ and $Z_{par}$ can be computed, the parallel step is complete.\label{alg_parstep3}
\end{algorithm}

\begin{algorithm} 
\caption{Asynchronous continuous time parallel step in $W$}
{\bf 1.} Generate iid samples
$Z^r(0)^{r=1,\ldots,R}$ from the 
QSD $\mu$ in $W$. Using these as starting points, 
independently evolve $R$ copies $(Z^r(t)_{t \ge 0})^{r=1,\ldots,R}$ of the PDMP. 

{\bf 2.} Reorder the $\Delta t$ time intervals of these copies in the order they are computed in wall-clock time, {\em i.e.} as $Z^{r_k}(m_k\Delta t)_{k\ge 1}$ where $t_{wall}^{r_1}(m_1)\le t_{wall}^{r_2}(m_2) \le \ldots$ and $t_{wall}^r(n)$ is the wall-clock time 
it takes to compute 
the PDMP $Z^r(t)_{t \ge 0}$ up to physical time 
$t=n\Delta t$.  Set $\tau^r = \inf\{t: Z^r(t) \notin W\}$,  $K = \inf\{k:\tau^{r_k} \le (m_k+1)\Delta t\}$, and 
\begin{equation*}
f_{par} = \sum_{k=1}^{K-1} \int_{m_k \Delta t}^{(m_k+1)\Delta t}f(Z^{r_k}(t))\,dt + \int_{m_K\Delta t}^{\tau^{r_K}} f(Z^{r_K}(t))\,dt
\end{equation*}
and $T_{par} = {\mathbbm{1}}_{par}$ by using the same formula but with ${\mathbbm{1}}(z) \equiv 1$ in place of $f$. Let $$Z_{par} = Z^{r_K}(\tau^{r_K}).$$
Once $f_{par}$, $T_{par}$ and $\xi_{par}$ can be computed, the parallel step is complete.\label{alg_parstep4}
\end{algorithm}

We will adopt the following assumption.
\begin{assumption}\label{A0b}
$Z(t)_{t \ge 0}$
has a QSD $\mu = \mu_W$ in each $W\in {\mathcal V}$ 
satisfying 
\begin{equation*}
\mu(A) = \lim_{t \to \infty} {\mathbb P}(Z(t) \in A|Z(s) \in W,\,0 \le s \le t)\quad \forall\,A \subseteq W.
\end{equation*}
\end{assumption} 

We do not explicitly indicate the dependence of $\mu$ on $W$. 
Notice the QSD of the PDMP is different from that of its skeleton chain in general.

Algorithms~\ref{alg_parstep3} and~\ref{alg_parstep4} are {parallel steps} 
designed for synchronous 
and asynchronous computing, respectively; see Figure~\ref{fig_parstep2}.
The first step in Algorithm~\ref{alg_parstep3} and 
Algorithm~\ref{alg_parstep4} -- the dephasing step -- involves generating 
$R$ independent samples from the QSD $\mu$ in $W$. 
Note that this is the QSD of the PDMP in $W$, 
not the QSD of its skeleton chain. The 
QSD samples can be obtained exactly as 
described in the previous section, but with the PDMP taking 
the place of the skeleton chain.

\begin{figure}
\subfigure{\includegraphics[width=6.45cm]{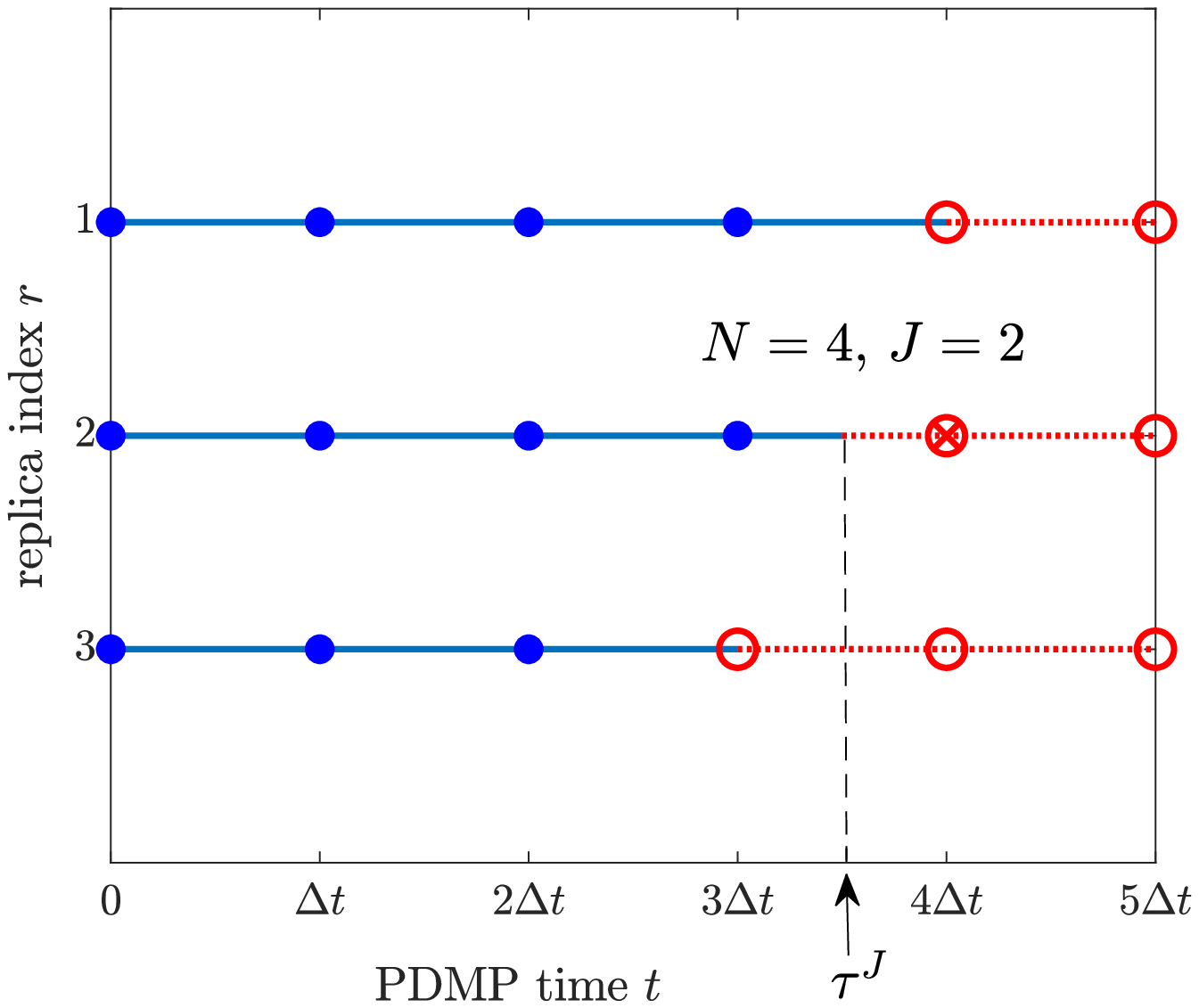}}
\subfigure{\includegraphics[width=6.45cm]{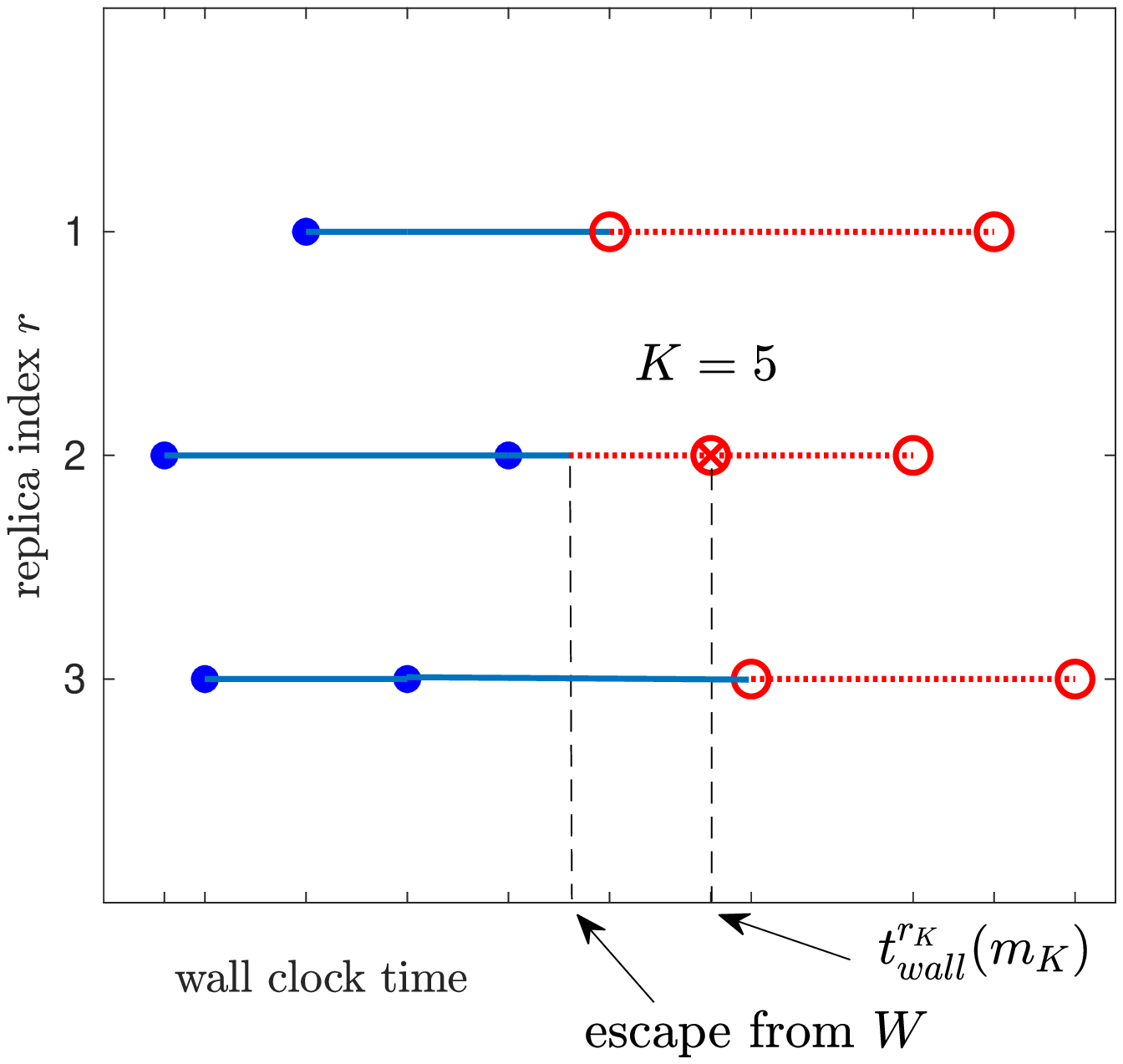}}

\caption{Illustration of the parallel steps used in Algorithm~\ref{alg_main_dyn}. The number of copies, or parallel replicas, is $R = 3$. 
Solid dots and hollow 
circles correspond to trajectories 
at PDMP times $n \Delta t$. 
The 
cross indicates the 
terminal point of a $\Delta t$-time 
interval corresponding 
to an escape from $W$. Times of copies that 
contribute to $g_{par}$ 
and $T_{par}$ are pictured with 
solid line segments and solid dots, while 
times that do not contribute are pictured with dotted lines and hollow circles.
Left: The 
synchronous parallel 
step, Algorithm~\ref{alg_parstep3}. 
Copy 
$r = 2$ escapes at PDMP time $\tau^2$. 
In this example $N = 4$ and $J = 2$.
Right: The 
asynchronous parallel 
step, Algorithm~\ref{alg_parstep4}. 
Copy $r=2$ is the first to 
escape in terms of the wall-clock 
time ordering. The notches 
on the wall-clock time axis 
are the values of $t_{wall}^{r_k}(m_k)$, 
$k=1,\ldots,11$. In this 
example
$K=5$.}
\label{fig_parstep2}
\end{figure}

The speedup from the parallel 
step can be up to a factor of $R$, the number of copies or replicas~\cite{aristoff2014parallel,aristoff2015parallel, le2012mathematical,voter1998parallel,wang2018stationary}, 
provided the underlying PDMP 
simulation algorithm is based on time discretization. If the PDMP simulation algorithm is based on computing the skeleton chain, then 
the speedup in Algorithm~\ref{alg_parstep3} 
may be reduced. 
This can be mitigated, however,
by using Algorithm~\ref{alg_parstep4} instead.
The parallel step is consistent for any $W$, with a speedup if $W$ is metastable for the PDMP.

Theorem~\ref{thm_consistent2} 
gives conditions that establish 
consistency of the parallel steps,
Algorithm~\ref{alg_parstep3} and~\ref{alg_parstep4}. 
For the asynchronous parallel step, 
the crucial condition essentially 
says that the wall-clock time it takes to compute 
a $\Delta t$ time interval of $Z(t)_{t\ge 0}$ is independent of its position. This is reasonable if $Z(t)_{t\ge 0}$ is 
simulated via a time discretization  
technique with a fixed time step. It may not be reasonable 
if a skeleton chain-based technique, 
like Algorithm~\ref{alg1}, is used instead.

\begin{algorithm}
\caption{Continuous time computation of stationary averages}
Choose an initial point $Z(0)$, set $f_{sim} = 0$, 
$T_{sim} = 0$, and iterate:
\vskip5pt

{\bf 1.}  Starting at $Z(0)$, evolve $Z(t)_{t \ge 0}$ forward in time, stopping at time
\begin{equation*}
S = \inf\{t\ge T_{corr}^\mu(W):\exists\,W \in {\mathcal V}\,s.t.\,Z(s) \in W,\, s \in [t-T_{corr}^\mu(W),t]\},
\end{equation*}
the first time it remains in some $W \in {\mathcal V}$ for consecutive time
$T_{corr}^\mu(W)$. Set
\begin{equation*}
f_{decorr} =  \int_0^{S} f(Z(t))\,dt
\end{equation*}
and $T_{decorr} = {\mathbbm{1}}_{decorr}$ using the same formula. Store this $W$ for Step 2 and update $$f_{sim} \leftarrow f_{sim} + f_{decorr}, \qquad  T_{sim} \leftarrow T_{sim} + T_{decorr}.$$

{\bf 2.} Run the parallel step (Algorithm~\ref{alg_parstep3} or~\ref{alg_parstep4}) in the set $W$ from Step 1. Update $f_{sim} \leftarrow f_{sim} + f_{par}$, $T_{sim} \leftarrow T_{sim} + T_{par}$, set $Z(0) = Z_{par}$, and then go to Step 1. 
\vskip5pt

The algorithm stops when $T_{sim}$ exceeds a user-chosen threshold $T_{stop}$. At this time, 
\begin{equation*}
\langle f \rangle \approx \frac{f_{sim}}{T_{sim}}
\end{equation*}
is our estimate of the stationary average~\eqref{ergodic}.\label{alg_main_dyn}
\end{algorithm}

\begin{theorem}[Consistency of the parallel steps Algorithm~\ref{alg_parstep3} 
and~\ref{alg_parstep4}]\label{thm_consistent2}
\hskip150pt
\vskip2pt
\noindent (i) Let $\mu$ be the QSD of $Z(t)_{t \ge 0}$ in some $W \in {\mathcal V}$, 
suppose that ${\mathcal L}(Z(0)) = \mu$, 
and define $\tau = \inf\{t\ge 0:Z(t) \notin W\}$. Then in Algorithm~\ref{alg_parstep3},
\begin{equation}\label{avg_correct3}
{\mathbb E}(f_{par}) = {\mathbb E}\left(\int_0^\tau f(Z(t))\,dt\right)
\end{equation} 
and
\begin{equation}\label{exit_correct3}
 (T_{par},Z_{par}) \stackrel{\mathcal L}{=} (\tau,Z(\tau)).
\end{equation}
\vskip5pt
\noindent (ii) 
Suppose 
$(t_{wall}(m)_{m \ge 0})^{1 \le r \le R}$, the wall-clock times from Algorithm~\ref{alg_parstep4}, satisfy
the assumptions 
of Proposition~\ref{prop_asynch} when $(Y^r(t)_{t \ge 0})^{r=1,\ldots,R}$ 
equals $(Z^r(t)_{t \ge 0})^{r=1,\ldots,R}$. Adopt the 
assumptions in (i) above. Then~\eqref{avg_correct3}-\eqref{exit_correct3} hold.
\end{theorem}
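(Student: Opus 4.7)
The plan is to reduce both parts of Theorem~\ref{thm_consistent2} to the general consistency results of Theorems~\ref{theorem_dyn} and~\ref{theorem_avg} by exhibiting trajectory fragments $(X_k(t)_{0\le t\le \Delta t})_{k\ge 1}$ from the PDMP copies $(Z^r(t)_{t\ge 0})^{r=1,\ldots,R}$ satisfying Assumption~\ref{A1}. Since the PDMP has c\`adl\`ag paths and the strong Markov property, and $W$ is open so that $\tau$ is a stopping time and QSD evolution makes sense, the general framework applies with constant fragment length $t_m \equiv \Delta t$ (so $\sum_m t_m = \infty$). In both parts the copies $Z^r$ start from independent samples of the QSD $\mu$ in $W$, matching the initial condition required by Propositions~\ref{prop_synch}-\ref{prop_asynch}, with $\mu$ playing the role of $\rho$.

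For part~(i), I would take $Y^r(t)_{t\ge 0} = Z^r(t)_{t\ge 0}$ in Proposition~\ref{prop_synch} and build fragments $X_k(t) = Z^{r_k}(m_k\Delta t + t)$ via~\eqref{frag1}. Proposition~\ref{prop_synch} then hands us Assumption~\ref{A1} (with appropriate irrelevant futures), and Theorems~\ref{theorem_dyn}-\ref{theorem_avg} give consistency of the associated Algorithm~\ref{alg_gen}. The remaining task is an index identification: the first escape index $L = \inf\{k\ge 1:T_k \le \Delta t\}$ equals $(N-1)R + J$, since the first replica in the cyclic order to have its $(N-1)\Delta t$-to-$N\Delta t$ time window exit $W$ is precisely replica $J$ at time step $N$. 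Under this identification $T_L = \tau^J - (N-1)\Delta t$, so $X_L(T_L) = Z^J(\tau^J) = Z_{par}$; splitting the sum $\sum_{m=1}^L \int_0^{T_m\wedge \Delta t} f(X_m(t))\,dt$ into its $L-1$ complete-fragment terms and one final partial-fragment term reproduces the three-piece formula for $f_{par}$ in Algorithm~\ref{alg_parstep3}, and the same bookkeeping with $f\equiv {\mathbbm 1}$ yields $T_{par} = (L-1)\Delta t + T_L$.

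Part~(ii) is analogous but invokes Proposition~\ref{prop_asynch} in place of Proposition~\ref{prop_synch}. The hypothesis of the theorem is precisely that the wall-clock times $t_{wall}^r(m)$ satisfy the three assumptions of Proposition~\ref{prop_asynch} when $(Y^r)^r = (Z^r)^r$, so the wall-clock ordering $(r_k,m_k)_{k\ge 1}$ is well defined and the fragments $X_k(t) = Z^{r_k}(m_k\Delta t + t)$ from~\eqref{frag2} satisfy Assumption~\ref{A1}. Theorems~\ref{theorem_dyn}-\ref{theorem_avg} then apply. The index identification here is cleaner: each $k$ corresponds to one fragment, so $L = K$ with $T_L = \tau^{r_K} - m_K\Delta t$, and the summation defining $f_{par}$ in Algorithm~\ref{alg_parstep4} is term-by-term equal to $\sum_{m=1}^L \int_0^{T_m\wedge\Delta t} f(X_m(t))\,dt$. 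The main obstacle overall is the bookkeeping in part~(i) to match the cyclic $(r_k,m_k)$ pattern with $(N,J)$ and show that the partial last fragment lines up correctly with the $\int_{(N-1)\Delta t}^{\tau^J}$ term; the probabilistic content of consistency comes entirely from Theorems~\ref{theorem_dyn}-\ref{theorem_avg} once Propositions~\ref{prop_synch}-\ref{prop_asynch} have placed us in the setting of Assumption~\ref{A1}.
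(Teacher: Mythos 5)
Your proposal is correct and follows exactly the route the paper takes: the paper's proof of Theorem~\ref{thm_consistent2} is a one-line reduction to Propositions~\ref{prop_synch} and~\ref{prop_asynch} with $(Z^r)^r$ in place of $(Y^r)^r$, $t_m\equiv\Delta t$, and $g=f$, after which Theorems~\ref{theorem_dyn}--\ref{theorem_avg} give the conclusions. Your additional index bookkeeping ($L=(N-1)R+J$ in the synchronous case, $L=K$ in the asynchronous case, and the matching of the fragment sum to the three-piece formula for $f_{par}$) is accurate and simply makes explicit what the paper leaves implicit.
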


Algorithm~\ref{alg_main_dyn} generates correct stationary 
averages by the same 
argument as in the previous 
section. 
It is worth mentioning that
Algorithms~\ref{alg_parstep3} 
and~\ref{alg_parstep4}
have a property not shared by 
Algorithms~\ref{alg_parstep1} and \ref{alg_parstep2}: the 
escape events $(T_{par},Z_{par})$ in these 
parallel steps
have the correct law for the PDMP. 
This allows us to use Algorithm~\ref{alg_main_dyn} to 
compute the dynamics of $Z(t)_{t\ge 0}$. 
More precisely, Algorithm~\ref{alg_main_dyn} 
leads to a PDMP dynamics 
that is correct on the 
quotient space obtained 
by considering each $W \in {\mathcal V}$ 
as a single point. Note 
that Algorithm~\ref{alg_main} 
cannot be used in this 
way, as it generates 
dynamics of the skeleton 
chain and not the PDMP.

\section{Numerics}\label{sec:numerics}

Here we test our algorithms 
above on a toy 
PDMP model, our aim 
being to illustrate  
Algorithms~\ref{alg_main} 
and~\ref{alg_main_dyn}. 
We will use these algorithms 
to sample the stationary 
average of a function $f$ with 
respect to the Boltzmann 
density
$\pi = Z^{-1}e^{-\beta V}$, 
where $f$ and $V$ are defined 
below and $\beta > 0$ 
is inverse temperature. The 
toy model is a two-dimensional 
version of a PDMP that may be 
defined in 
an arbitrary dimension $d-1$, as 
follows. Let $d_0,\ldots,d_{N-1} \in {\mathbb R}^{d-1}$ be direction vectors such that 
$d_0 + \ldots + d_{N-1} = 0$. Let 
${\mathbb Z}_N$ denote the integers 
modulo $N$, consider the indices of the $d_k$'s as elements of ${\mathbb Z}_N$, and for $k,\ell \in {\mathbb Z}_N$ 
define
\begin{align*}
F_{k,\ell}(x) &= \beta (d_k + \ldots d_{k+\ell})\cdot \nabla V(x).
\end{align*}

\begin{figure}
\includegraphics[width=7.5cm]{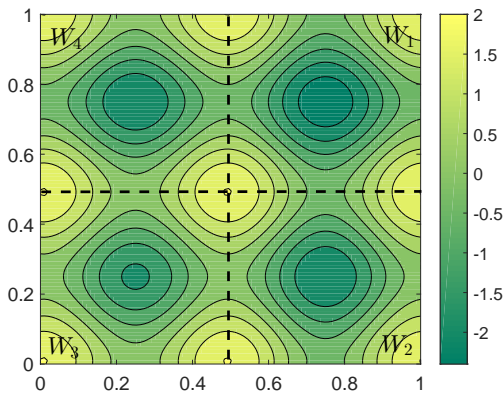}
\centering
\caption{Contour plot of the potential $V$ in~\eqref{Vpotential}.
$V$ has $4$ basins of attraction 
$W_1,W_2,W_3,W_4$ of  
different depths, with 
$W_1$ the deepest. Recall a basin of 
attraction for $V$ is a 
set of initial conditions $x(0)$ for which the differential 
equation $dx(t)/dt = -\nabla V(x(t))$ 
has a unique long-time limit.}
\label{fig0}
\end{figure}
Consider the PDMP with generator defined by
\begin{equation}\label{ell}
Lg(x,k) = d_k \cdot \nabla g(x,k) 
+ \left[g(x,k-1)-g(x,k)\right]\max_{0 \le \ell \le N-1}F_{k,\ell}(x)
\end{equation}
for suitable 
$g:\Omega \times {\mathbb Z}_N \to {\mathbb R}$ 
where either $\Omega = {\mathbb R}^{d-1}$ 
or $\Omega$ is a cube in ${\mathbb R}^{d-1}$
with periodic boundaries. This is the generator 
for a PDMP that, when moving in direction $d_k$ at point $x$, switches to direction 
$d_{k-1}$ with rate $\max_{0 \le \ell \le N-1}F_{k,\ell}(x)$. The 
resulting process can be seen 
as a rejection-free or ``lifted'' 
version of the sequential Metropolis 
algorithm~\cite{kapfer2017irreversible,metropolis1953equation}, historically the first 
nonreversible sampling algorithm~\cite{hastings1970monte,metropolis1953equation} for sampling the Boltzmann distribution. Straightforward calculations show this PDMP has invariant density proportional 
to $\pi$; 
see Remark~\ref{prop_balance1} 
in Section~\ref{sec:proofs2}.

We consider the case where 
state space is $\Omega = [0,1]^2$ with 
periodic boundaries, $N = 4$, 
$d_0 = (1,0),\,d_1 = (-1,0),\, 
d_2 = (0,1),\,d_3 = (0,-1)$, 
and
the potential energy
$V$ is
pictured in Figure~\ref{fig0}.
Specifically 
\begin{equation}\label{Vpotential}
    V(x,y) = \cos(4\pi x)+\cos(4\pi y)+\frac{1}{5}\sin(2\pi x)+\frac{1}{5}\sin(2\pi y).
\end{equation}
We define
${\mathcal W}$ and ${\mathcal V}$ 
using the basins of attraction 
$W_i$, $i=1,\ldots,4$ defined 
as
the four squares of 
equal side length $1/2$ inside $[0,1]^2$. See Figure~\ref{fig0}. 
Thus with $x$ and $k$ the position and direction variables, respectively, of the skeleton chain and PDMP, and $\theta$ the jump time variable of the skeleton chain,
\begin{align*}
{\mathcal W} &=\left\{\{(x,k,\theta):x \in W_i,\,k \in\{0,1,2,3\},\, \theta>0\}: i=1,2,3,4\right\},\\
{\mathcal V} &= \left\{\{(x,k):x \in W_i,\,k \in\{0,1,2,3\}\}:i=1,2,3,4\right\}.
\end{align*}
That is, the skeleton chain or PDMP is in a given set in ${\mathcal W}$ or 
${\mathcal V}$ at a 
particular time if and only if its position 
variable belongs to a given 
$W_i$ at that time.

\begin{algorithm}
\caption{Time discretization of the PDMP~\eqref{ell}}\label{alg_ell}
Choose an initial point $Z(0) \in {\mathbb R}^{d-1} \times\{0,\ldots,N-1\}$.  and 
pick $d_0,\ldots,d_{N-1}\in {\mathbb R}^{d-1}$ with 
$\sum_{k=0}^{N-1} d_k = 0$. Choose a time step $\delta t>0$. Then set 
$t = 0$ and iterate:
\vskip5pt
\noindent {\bf 1.} If $Z(t) = (x,k)$, 
define an acceptance probability
$$p = \min_{0 \le \ell \le N-1} \exp\left(\beta V(x)-\beta V(x+d_k\delta t+\ldots + d_{k+\ell}\delta t)\right).$$

\noindent {\bf 2.} With probability $p$, set $Z(t+\delta t) = (x+d_k\delta t,k)$, else set $Z(t+\delta t) = (x,k-1)$.

\noindent {\bf 3.} Update $t \leftarrow t+\delta t$ and return to Step 1.

\vskip5pt
Here, $Z(n\delta t)_{n \ge 0}$ has invariant measure proportional to $e^{-\beta V}$; see Remark~\ref{rmk_ell}.
\end{algorithm}

We tested Algorithm~\ref{alg_main} 
and~\ref{alg_main_dyn} with 
the synchronous parallel steps 
Algorithm~\ref{alg_parstep1} 
and Algorithm~\ref{alg_parstep3}, 
respectively. We used 
both algorithms to estimate the stationary 
average
$\langle f \rangle$ where $f(x,k) = {\mathbbm{1}}_{x \in W_1}$, 
the characteristic function of the deepest basin of $V$.
We used up to $R = 100$ replicas and decorrelation 
times that 
were the same in each basin, $T_{corr}^\nu \equiv T_{corr}^\nu(W_i)$ and $T_{corr}^\mu \equiv T_{corr}^\mu(W_i)$, $i=1,\ldots,4$. We 
used Algorithm~\ref{alg_ell}
 with time step $\delta t=10^{-2}$
to simulate the PDMP. 
In Algorithm~\ref{alg_main_dyn} we took 
$\Delta t = \delta t =10^{-2}$.
The results 
are in Figures~\ref{fig1},~\ref{fig2}
and~\ref{fig3}.

\begin{figure}
\subfigure{\includegraphics[width=6.4cm]{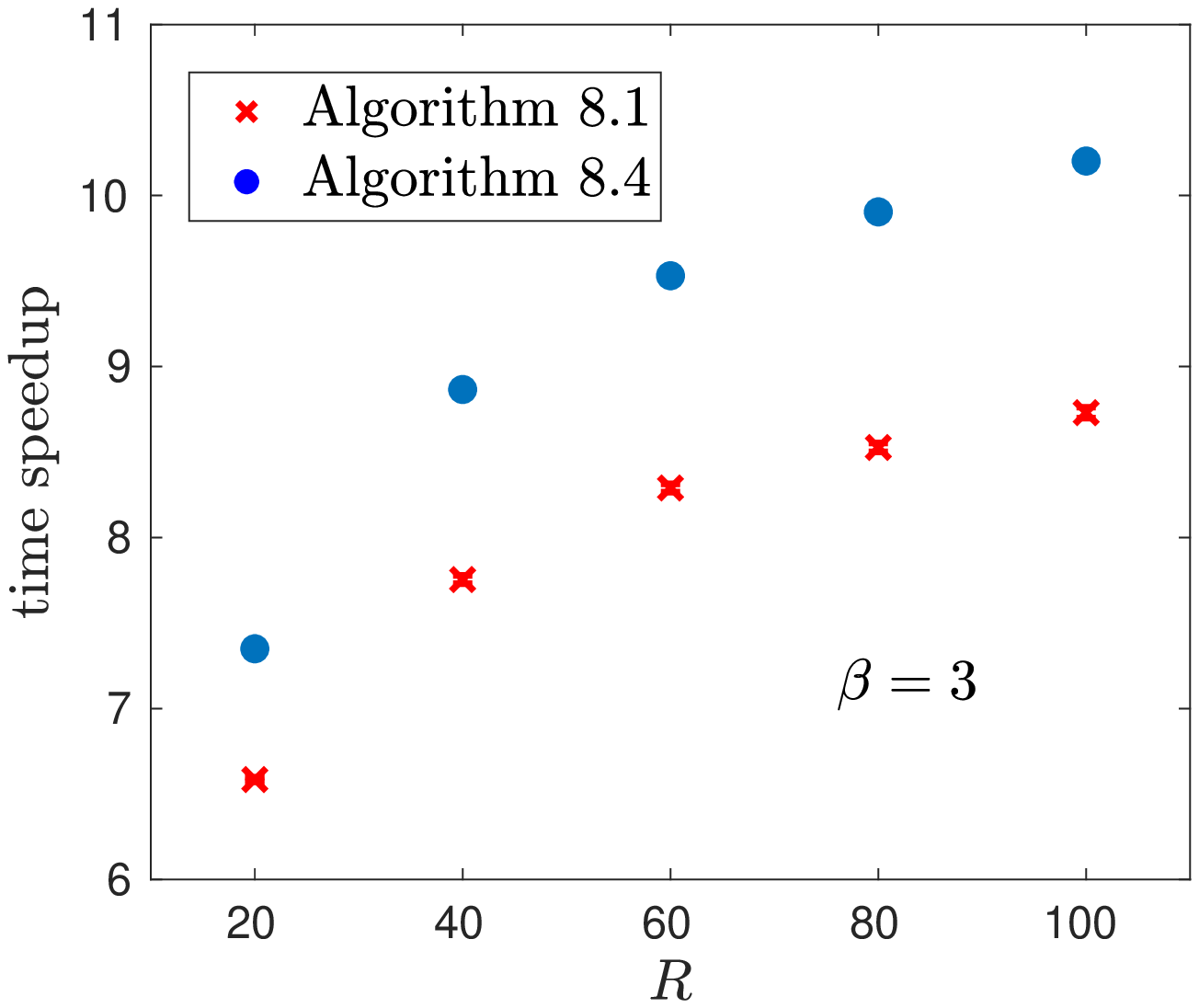}}
\subfigure{\includegraphics[width=6.4cm]{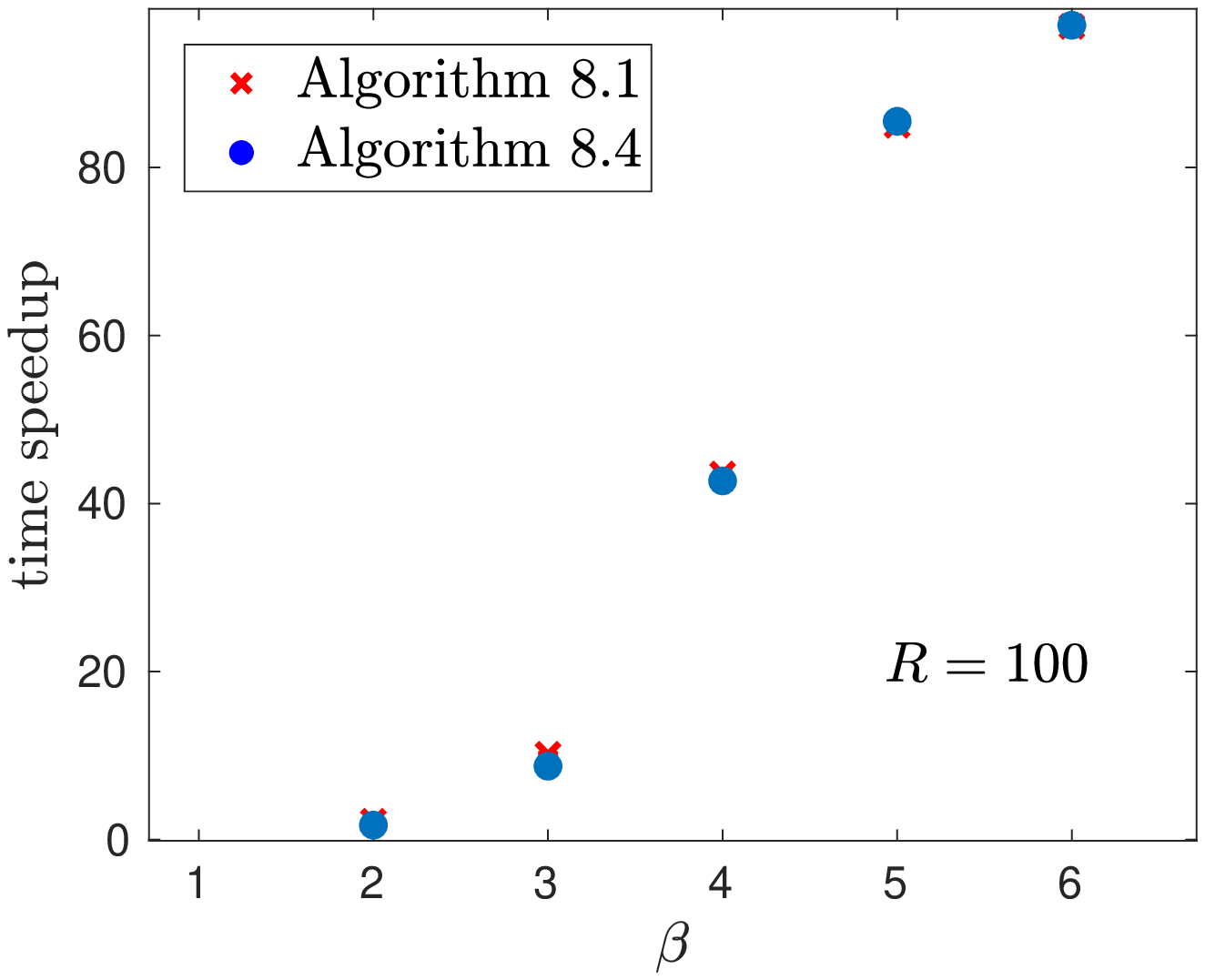}}
\caption{Left: Time speedup vs. number 
of replicas, $R$, 
when $\beta = 3$.
Right: 
Time speedup vs. $\beta$ when $R = 100$. In Algorithm~\ref{alg_parstep1} 
we used $T_{corr}^\nu = 100$, 
while in Algorithm~\ref{alg_parstep3} we 
used $T_{corr}^\mu = 6$. The decorrelation 
times were chosen so that 
Algorithm~\ref{alg_parstep1} and~\ref{alg_parstep3} would 
have similar values for the time 
speedup. Error bars for each 
data point were obtained from $50$ independent 
simulations, but they are smaller than 
the data markers. In the limit $\beta \to \infty$, the computational effort to sample 
the QSD vanishes compared to the 
effort to generate an escape 
event using serial simulation.
In this limit, at left, 
we expect scaling 
like $R$ for finite $R$. 
On the other hand, at right, we expect
the time speedup 
to level off at $R = 100$ 
as $\beta \to \infty$. This figure 
is based on simulations 
performed by Peter Christman.}
\label{fig1}
\end{figure}

To analyze our results, we 
defined an idealized speedup factor as follows. 
Let $T^{R}$ be an idealized 
wall-clock time for a 
simulation of Algorithm~\ref{alg_main} or~\ref{alg_main_dyn} using the parallel steps 
Algorithm~\ref{alg_parstep1} 
and~\ref{alg_parstep3}, respectively, up to a 
fixed time $T_{stop}$. The 
idealized wall-clock time $T^R$ is
obtained by  
assuming that we use $R$ parallel processors with zero 
communication cost, such that
on each processor, one 
step of the skeleton chain 
is computed in wall clock 
time~$1$. Writing $T^1$ for the wall-clock time corresponding to  
$1$ processor or direct serial simulation, we define
\begin{equation*}
\text{time speedup} = \frac{T^{R}}{T^{1}}.
\end{equation*}
We include computation time from the 
dephasing step -- {\em i.e.} the QSD sampling 
step in Algorithms~\ref{alg_parstep1} 
and~\ref{alg_parstep3} -- as 
part of $T^{R}$. We assume 
this dephasing is done using 
a Fleming-Viot-based technique 
as described above, with $R$ copies 
of the underlying skeleton chain or PDMP.

Processor communication, 
which we do not account 
for, 
of course takes a toll on the time 
speedup. However, 
the processor communication cost 
is small compared 
to the rest of the computational effort
if the sets in ${\mathcal W}$ 
and ${\mathcal V}$ are 
significantly metastable. Thus, our time speedup 
gives a reasonable picture of 
the gain that can be expected.

The time speedup depends on the parameters in Algorithms~\ref{alg_main} and~\ref{alg_main_dyn}. 
If all other parameters are held constant,
the time speedup 
increases with $R$ or $\beta$, due 
to increasing parallelization or metastability, 
respectively (Figure~\ref{fig1}), while the 
time speedup decreases 
with $T_{corr}^\nu$ and 
$T_{corr}^\mu$, due to increased 
effort to sample the QSD (Figure~\ref{fig2}). 
With increasing metastability, 
the relative computational effort to sample 
the QSD decreases in comparison with 
the effort to simulate an 
escape from a metastable set. 
Since the latter is done in 
parallel, increasing the metastability 
leads to a larger time speedup.

\begin{figure}
\subfigure{\includegraphics[width=6.45cm]{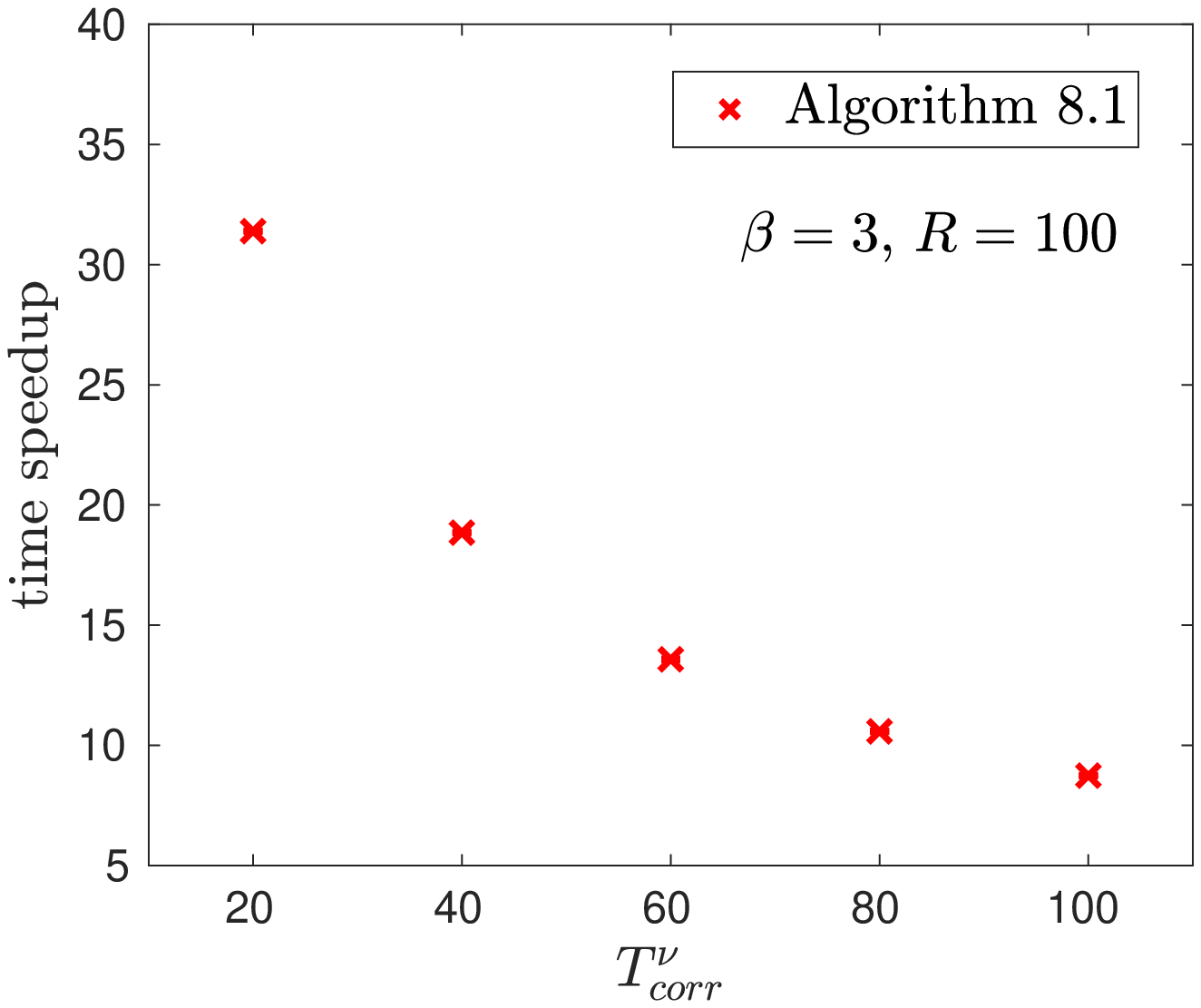}}
\subfigure{\includegraphics[width=6.45cm]{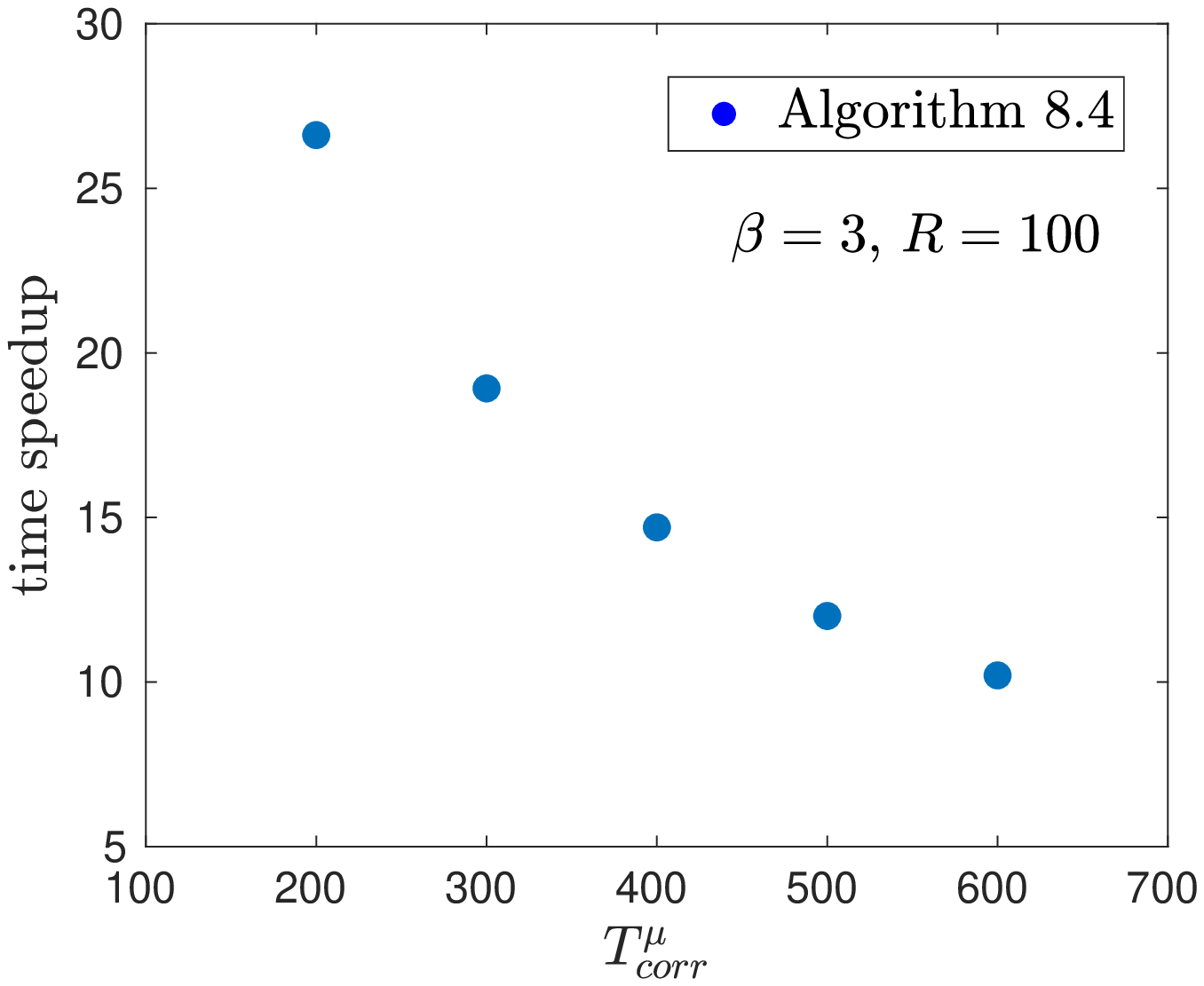}}

\caption{Left: Time speedup vs. $T_{corr}^\nu$
in Algorithm~\ref{alg_parstep1}. 
Right: Time speedup vs. $T_{corr}^\mu$ 
in Algorithm~\ref{alg_parstep3}. 
In both plots $\beta = 3$ and $R = 100$. 
Error bars for each data point were obtained from $50$ independent 
simulations, but they are smaller than the 
data markers. This figure is 
based on simulations performed 
by Peter Christman.}
\label{fig2}
\end{figure}

\begin{figure}
\subfigure{\includegraphics[width=6.45cm]{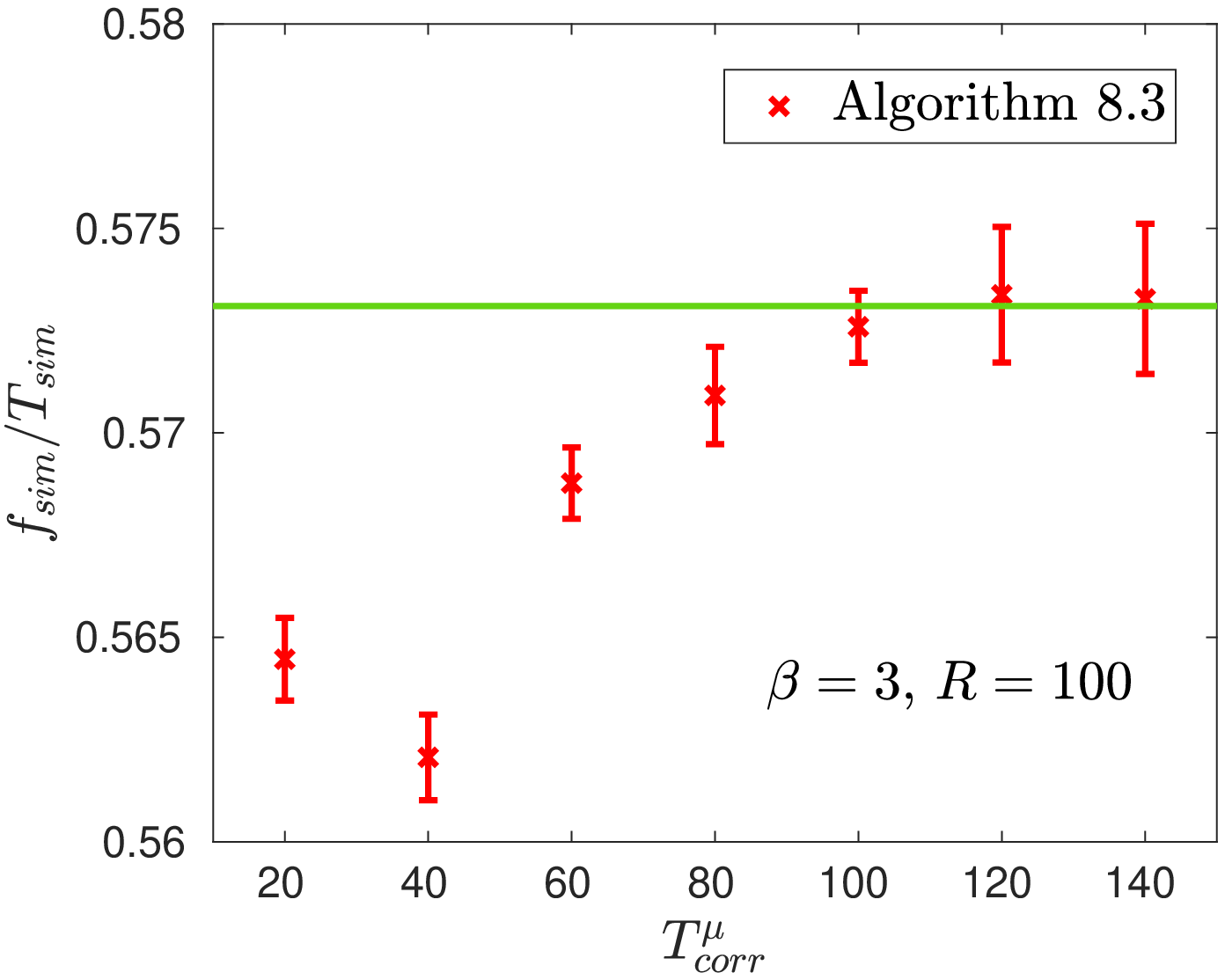}}
\subfigure{\includegraphics[width=6.45cm]{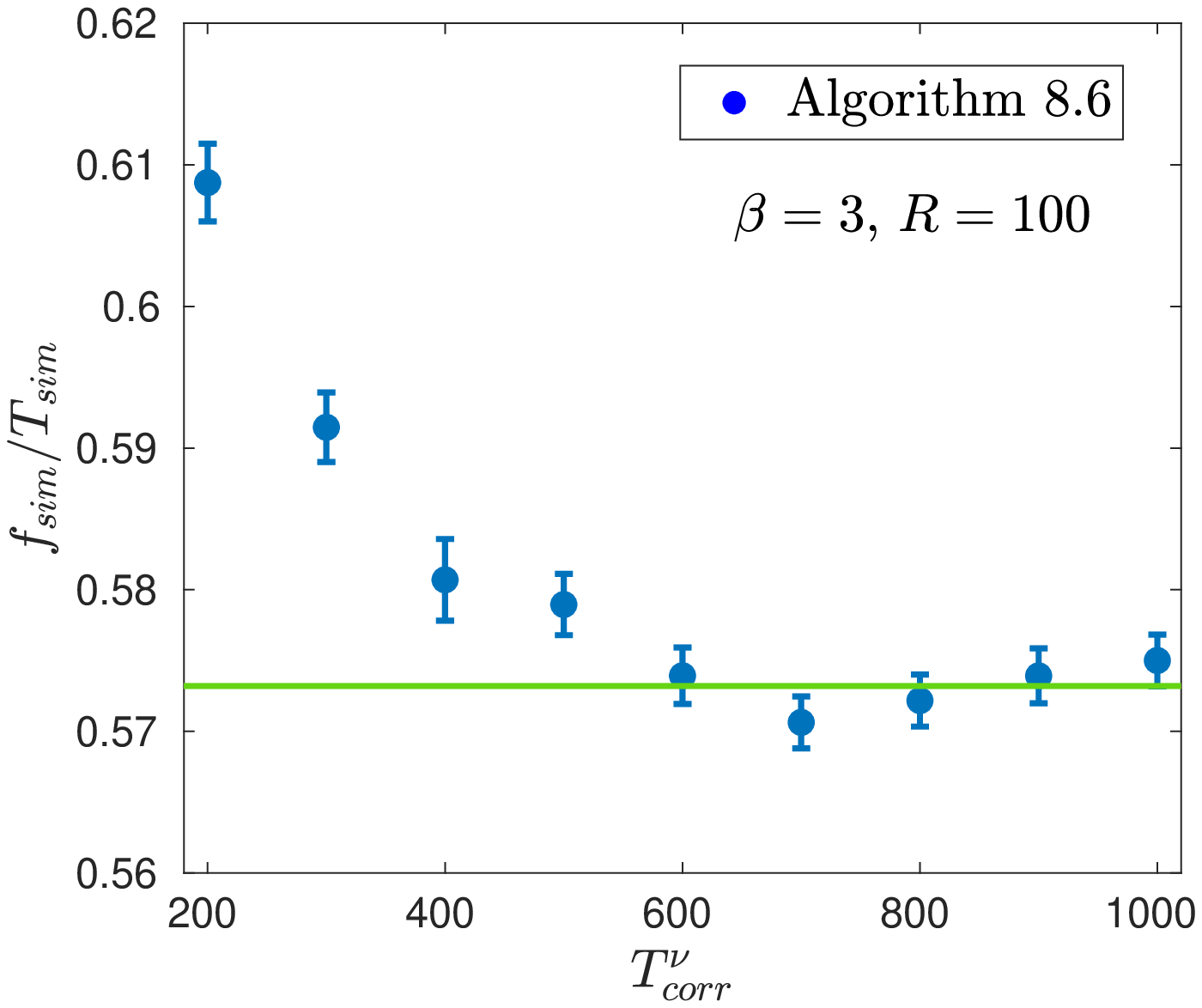}}

\caption{Left: Approximation $f_{sim}/T_{sim}$ of the 
stationary average $\langle f \rangle$ using 
Algorithm~\ref{alg_main}. Right: 
Approximation $f_{sim}/T_{sim}$ of the 
stationary average $\langle f \rangle$ using 
Algorithm~\ref{alg_main_dyn}. In both plots 
$\beta = 3$ and $R = 100$, and 
simulations ran for $T_{sim}$ exceeding 
$10^6$. 
Error bars are 
empirical standard deviations obtained from $50$ independent simulations for each data point. 
The exact 
value $\langle f \rangle$ is indicated 
with a solid line. This figure is 
based on simulations performed 
by Peter Christman.}
\label{fig3}
\end{figure}

Figure~\ref{fig3} shows 
that the approximation 
$f_{sim}/T_{sim}$ approaches 
the stationary average 
$\langle f\rangle$ as the 
decorrelation times
$T_{corr}^\nu$ and $T_{corr}^\mu$ 
increase, as expected. As 
discussed above, appropriate 
values of these QSD sampling times 
depend on the degree 
of metastability.
Note that the approximations
$f_{sim}/T_{sim}$ are quite 
good even for small QSD sampling times. 
This was true not just for $f(x,k) = {\mathbbm{1}}_{x \in W_1}$ 
but for a
variety of other functions. This 
feature, of reasonable accuracy in ParRep
even for relatively small decorrelation 
times, was observed before 
in~\cite{wang2017parallel,wang2018stationary}. 
Here this may be a result of the momentum-like direction variables, which 
can make the PDMP unlikely to immediately 
escape from a metastable set just 
after entering.

\section{Proofs}\label{sec:proofs}

Our first two results 
below, Proposition~\ref{prop_exp_exit} and~\ref{prop_indep_exit}, 
establish the memoryless 
distribution of the escape 
time starting from the QSD, and the independence of 
the escape time and escape 
point, for general 
Markov processes. See for instance~\cite{collet2012quasi} for details. 
We include proofs 
for completeness.

\begin{proposition}\label{prop_exp_exit}
Let $X(t)_{t \ge 0}$ have a
QSD $\rho$ in $U$, and suppose 
${\mathcal L}(X(0)) = \rho$. Suppose
$T= \inf\{t\ge 0: X(t) \notin U\}$ 
is finite almost surely. 
Then $T$
has a memoryless distribution; 
that is, $T$ is either  
exponentially or geometrically distributed.
\end{proposition}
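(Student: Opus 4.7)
The plan is to exploit the defining identity of the QSD together with the Markov property to derive the multiplicative functional equation $\phi(t+s) = \phi(t)\phi(s)$, where $\phi(t) := {\mathbb P}(T > t)$ under ${\mathcal L}(X(0)) = \rho$. Once this is in hand, standard arguments convert it into exponential decay in continuous time and geometric decay in discrete time.

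First I would fix $s,t \ge 0$ and write
\begin{equation*}
{\mathbb P}(T > t+s) = {\mathbb E}\bigl[\,{\mathbbm{1}}_{T>t}\,{\mathbb P}(T > t+s \mid {\mathcal F}_t)\bigr],
\end{equation*}
where ${\mathcal F}_t$ is the natural filtration of $X(t)_{t \ge 0}$. On the event $\{T>t\}$, we have $X(r) \in U$ for all $r \le t$, so by the Markov property at time $t$,
\begin{equation*}
{\mathbb P}(T > t+s \mid {\mathcal F}_t)\,{\mathbbm{1}}_{T>t} = {\mathbb P}_{X(t)}(T > s)\,{\mathbbm{1}}_{T>t}.
\end{equation*}
Next, I would invoke the defining property~\eqref{QSDdef} of the QSD: conditional on $X(0) \sim \rho$ and $T > t$, the law of $X(t)$ is again $\rho$. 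Integrating against this conditional law gives
\begin{equation*}
{\mathbb E}\bigl[\,{\mathbb P}_{X(t)}(T>s)\,\big|\,T>t\bigr] = \int_U {\mathbb P}_x(T>s)\,\rho(dx) = {\mathbb P}(T>s),
\end{equation*}
since the outer probability on the right is taken under $X(0) \sim \rho$. Combining,
\begin{equation*}
{\mathbb P}(T>t+s) = {\mathbb P}(T>t)\,{\mathbb P}(T>s),
\end{equation*}
i.e.\ $\phi(t+s) = \phi(t)\phi(s)$.

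Finally I would solve this functional equation. In the discrete-time case, iterating gives $\phi(n) = \phi(1)^n$, so $T$ is geometric with parameter $p = 1 - \phi(1) \in (0,1]$ (positivity of $p$ follows from $T<\infty$ a.s., which forces $\phi(1)<1$ unless $\phi \equiv 1$, contradicting finiteness). In continuous time, $\phi$ is nonincreasing and, since $X(t)_{t \ge 0}$ has c\`adl\`ag paths and $U$ is open, $\{T>t\}$ varies right-continuously in $t$, so $\phi$ is right-continuous; any right-continuous multiplicative nonincreasing solution of Cauchy's equation has the form $\phi(t) = e^{-\lambda t}$ for some $\lambda \in [0,\infty]$, and $T<\infty$ a.s.\ again forces $\lambda \in (0,\infty)$, giving $T \sim \mathrm{Exp}(\lambda)$.

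The main obstacle is simply bookkeeping: making sure the QSD identity is applied on the correct conditional probability (so that the outer distribution of $X(0)$ remains $\rho$), and justifying the regularity needed to pass from the Cauchy equation to exponential form in continuous time; the c\`adl\`ag assumption is what keeps this step clean.
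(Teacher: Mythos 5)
Your proposal is correct and follows essentially the same route as the paper: combine the QSD identity~\eqref{QSDdef} with the Markov property at time $t$ to get ${\mathbb P}(T>t+s)={\mathbb P}(T>t){\mathbb P}(T>s)$, then solve the multiplicative Cauchy equation. You simply spell out the conditioning and the regularity (right-continuity of the survival function) in more detail than the paper does.
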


\begin{proof}
The definition~\eqref{QSDdef} of the QSD 
together with the Markov property show that
${\mathcal L}(X(t+s)_{s \ge 0} | T > t) = {\mathcal L}(X(s)_{s \ge 0})$.
Thus, ${\mathbb P}(T > t+s|T> t) = {\mathbb P}(T > s)$ for any $s,t\ge 0$.
When $T$ is finite-valued, the only distribution satisfying this 
is
\begin{equation}\label{expgeom}
{\mathbb P}(T>t) = e^{-\lambda t}.
\end{equation}
We use~\eqref{expgeom} to indicate either the (continuous) exponential distribution 
with parameter $\lambda > 0$, 
or the (discrete) geometric distribution 
with parameter $p = 1-e^{-\lambda}$.
\end{proof}
 
 \begin{proposition}\label{prop_indep_exit}
 Let $X(t)_{t \ge 0}$ have a 
 QSD $\rho$ in $U$, and suppose 
 ${\mathcal L}(X(0)) = \rho$. 
Suppose $T = \inf\{t \ge 0:X(t) \notin U\}$ 
is finite almost surely. Then 
 $T$ and $X(T)$ are 
 independent.
 \end{proposition}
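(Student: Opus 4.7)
The plan is to leverage the defining property of the QSD, namely that conditional on non-escape by time $t$, the process has regenerated in law; combining this with the Markov property at the deterministic time $t$ will show that the escape point $X(T)$ is insensitive to conditioning on $\{T>t\}$, which is exactly independence from $T$ (since $T$ is memoryless).

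First, fix $t \ge 0$ and condition on $\{T > t\}$. Applying~\eqref{QSDdef} together with the (ordinary) Markov property of $X(t)_{t\ge 0}$ at the deterministic time $t$ gives
\begin{equation*}
\mathcal{L}\bigl(X(t+s)_{s\ge 0} \,\big|\, T > t\bigr) = \mathcal{L}\bigl(X(s)_{s\ge 0}\bigr).
\end{equation*}
Let $T' = \inf\{s \ge 0 : X(t+s) \notin U\}$ denote the first escape time of the shifted process. Then on $\{T > t\}$ we have $T = t + T'$ and $X(T) = X(t+T')$, and by the displayed equality the pair $(T', X(t+T'))$, conditional on $\{T > t\}$, has the same joint law as $(T, X(T))$ unconditionally. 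In particular, for every measurable set $A$,
\begin{equation*}
{\mathbb P}(X(T) \in A \,|\, T > t) = {\mathbb P}(X(T) \in A).
\end{equation*}

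Next I would translate this into genuine independence of the random variables $T$ and $X(T)$. Rearranging the previous display gives
\begin{equation*}
{\mathbb P}(X(T) \in A,\, T > t) = {\mathbb P}(X(T) \in A)\,{\mathbb P}(T > t)
\end{equation*}
for every $t \ge 0$ and every measurable $A$. By Proposition~\ref{prop_exp_exit}, $T$ is exponentially or geometrically distributed, so its distribution (and hence the $\sigma$-algebra it generates) is determined by the tail events $\{T > t\}$, $t \ge 0$. A standard monotone-class / $\pi$-$\lambda$ argument then upgrades the preceding factorization to
\begin{equation*}
{\mathbb P}(X(T) \in A,\, T \in B) = {\mathbb P}(X(T) \in A)\,{\mathbb P}(T \in B)
\end{equation*}
for every measurable $B$, establishing the claimed independence.

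The only delicate point is the first step, where one needs the QSD regeneration identity to hold for the whole shifted trajectory rather than for a single time slice as written in~\eqref{QSDdef}. This is a routine consequence of combining~\eqref{QSDdef} with the Markov property at time $t$: on $\{T > t\}$, the transition law from $X(t)$ coincides with the law starting from its distribution, which is $\rho$. In the c\`adl\`ag/strong-Markov setting assumed in the paper this is immediate, and in both discrete and continuous time the passage from finite-dimensional marginals to the law on path space is standard.
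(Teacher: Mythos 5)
Your proof is correct and rests on the same key identity as the paper's, namely that ${\mathcal L}(X(t+s)_{s\ge 0}\,|\,T>t)={\mathcal L}(X(s)_{s\ge 0})$, which the paper likewise derives from~\eqref{QSDdef} and the Markov property (it is stated explicitly in the proof of Proposition~\ref{prop_exp_exit}). The only difference is bookkeeping: you obtain ${\mathbb P}(X(T)\in A,\,T>t)={\mathbb P}(X(T)\in A)\,{\mathbb P}(T>t)$ directly from the regeneration of the shifted path and close with a $\pi$-$\lambda$ argument, whereas the paper computes ${\mathbb P}(T\in((n-1)t,nt],\,X(T)\in A)$ for each $n$ and sums the resulting geometric series to reach the same factorization.
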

 
 \begin{proof}
 By Proposition~\ref{prop_exp_exit}, ${\mathbb P}(T>t) = e^{-\lambda t}$. For $A$ in the complement of $U$,
 \begin{align*}
{\mathbb P}(T \in ((n-1)t,nt],X(T) \in A) 
&= {\mathbb P}(T > (n-1)t, 
X(nt \wedge T) \in A) \\
&= {\mathbb P}(X(nt\wedge T) \in A|T>(n-1)t){\mathbb P}(T>(n-1)t)\\
&= {\mathbb P}(X(T) \in A,T \le nt|T>(n-1)t){\mathbb P}(T>(n-1)t)\\
&= {\mathbb P}(X(T)\in A,T \le t)e^{-\lambda (n-1)t},
\end{align*}
where the last step uses~\eqref{QSDdef}. 
Summing over $n \ge 1$ establishes the result:
\begin{align*}
{\mathbb P}(X(T) \in A) 
&= {\mathbb P}(X(T) \in A, T\le t)\frac{1}{1-e^{-\lambda t}} \\
&= \frac{{\mathbb P}(X(T) \in A, T\le t)}{{\mathbb P}(T \le t)}.
\end{align*}

 \end{proof}
 
Below, we write MGF 
for the moment generating 
function of a random variable.
The results in 
Proposition~\ref{prop_1} below 
hold in both continuous and 
discrete time. To connect
the discrete and continuous time cases, 
we write $1-p = e^{-\lambda}$ 
where $p \in (0,1)$ is the geometric parameter 
and $\lambda>0$ is the exponential rate. 

\begin{proposition}\label{prop_1}
Let 
$t_1,t_2,\ldots$ be nonnegative deterministic times such that $\sum_{m=1}^\infty t_m = \infty$. Let $\tau_1,\tau_2,\ldots$ be 
random variables such that ${\mathbb P}(\tau_1>t) =  e^{-\lambda t}$ and 
\begin{equation}\label{recurse}{\mathbb P}(\tau_m>t|\tau_{m-1}>t_{m-1},\ldots,\tau_1 >t_1) = e^{-\lambda t}, \quad m\ge 2.
\end{equation} 
Let 
$L= \inf\{m \ge 1:\tau_m \le t_m\}$.
Then $${\mathbb P}(t_1+\ldots + t_{L-1}+\tau_L>t)= e^{-\lambda t}.$$
\end{proposition}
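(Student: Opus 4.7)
The plan is to fix $t \ge 0$, locate the unique index $n \ge 1$ such that $s_{n-1} \le t < s_n$, where $s_n := t_1 + \cdots + t_n$ and $s_0 := 0$ (this $n$ exists because $\sum_m t_m = \infty$), and compute ${\mathbb P}(S > t)$ for $S := t_1 + \ldots + t_{L-1} + \tau_L$ by decomposing on the value of $L$.

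First I would record the trivial bound: whenever $L = m$, we have $S = s_{m-1} + \tau_m$ with $\tau_m \le t_m$, hence $s_{m-1} < S \le s_m$. Consequently, on $\{L = m\}$ with $m < n$ we get $S \le s_{n-1} \le t$, so $\{L = m, S > t\}$ has probability $0$ for each such $m$; and on $\{L = m\}$ with $m > n$ we get $S > s_{m-1} \ge s_n > t$ automatically. Therefore
\begin{equation*}
{\mathbb P}(S > t) \;=\; {\mathbb P}(L = n,\; \tau_n > t - s_{n-1}) \;+\; {\mathbb P}(L > n).
\end{equation*}

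Next I would evaluate the two probabilities by unwinding the conditional memorylessness~\eqref{recurse}. Iterated conditioning gives ${\mathbb P}(\tau_1 > t_1, \ldots, \tau_k > t_k) = e^{-\lambda s_k}$ for every $k \ge 1$, so in particular ${\mathbb P}(L > n) = e^{-\lambda s_n}$. For the first term, note $\{L = n\} = \{\tau_1 > t_1, \ldots, \tau_{n-1} > t_{n-1}, \tau_n \le t_n\}$, and since $0 \le t - s_{n-1} < t_n$ by choice of $n$, the event $\{\tau_n > t - s_{n-1}\}$ trims the upper endpoint, giving
\begin{equation*}
{\mathbb P}(L = n,\;\tau_n > t - s_{n-1}) = e^{-\lambda s_{n-1}} \bigl(e^{-\lambda(t - s_{n-1})} - e^{-\lambda t_n}\bigr) = e^{-\lambda t} - e^{-\lambda s_n}.
\end{equation*}
Summing the two contributions yields ${\mathbb P}(S > t) = e^{-\lambda t}$, as required.

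I do not foresee any real obstacle here; this is essentially an algebraic manipulation driven by the memoryless tail formula together with the chain-rule decomposition of the joint survival probability. The only minor care is that the same arithmetic must be read off in both the discrete and continuous setting, but since~\eqref{recurse} is stated uniformly and $e^{-\lambda t}$ encodes both the exponential tail (continuous case) and the geometric tail with parameter $p = 1 - e^{-\lambda}$ (discrete case, for $t$ a nonnegative integer), the computation above goes through verbatim. The condition $\sum_m t_m = \infty$ is used exactly where one expects: to guarantee the index $n$ with $s_{n-1} \le t < s_n$ exists for every $t$, and equivalently to ensure $L < \infty$ almost surely so that $S$ is well defined.
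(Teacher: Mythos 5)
Your proof is correct, but it takes a genuinely different route from the paper. The paper proves the claim by computing the moment generating function of $S = s_{L-1}+\tau_L$: it evaluates ${\mathbb E}\bigl(e^{u\tau_m}\mathbbm{1}_{L=m}\bigr)$ separately in the continuous and discrete cases, sums over all $m$, and identifies the resulting telescoping series as the MGF of an exponential (resp.\ geometric) law. You instead compute the tail ${\mathbb P}(S>t)$ directly, localizing at the unique index $n$ with $s_{n-1}\le t<s_n$ and observing that $\{L=m\}$ forces $s_{m-1}<S\le s_m$, so only $L=n$ contributes partially while $L>n$ contributes entirely; the two terms $e^{-\lambda t}-e^{-\lambda s_n}$ and $e^{-\lambda s_n}$ then add up exactly. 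Both arguments rest on the same two ingredients --- the chain-rule identity ${\mathbb P}(\tau_1>t_1,\ldots,\tau_k>t_k)=e^{-\lambda s_k}$ and the decomposition of $\{L=m\}$ --- but your version is more elementary (no MGF inversion, no convergence condition $u<\lambda$) and handles the discrete and continuous cases in a single computation, whereas the paper must write out two different conditional MGFs. The small points you should make sure are airtight are exactly the ones you flagged: $L<\infty$ almost surely because $e^{-\lambda s_k}\to 0$, the strict inequality $S>s_{m-1}$ on $\{L=m\}$ (which holds since ${\mathbb P}(\tau_m>0\mid\cdots)=1$, and is anyway not needed for $m>n$ because $s_{m-1}\ge s_n>t$ already gives $S\ge s_{m-1}>t$), and, in the discrete case, that $t$, the $t_m$, and hence $t-s_{n-1}$ are integers so that the tail formula applies at those arguments. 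None of these is a gap; the proof stands as written.
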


\begin{proof}
Let $s_m = t_1+\ldots +t_m$ and $s_0 = 0$. 
By~\eqref{recurse} and induction,
\begin{equation}\label{induction}
{\mathbb P}(\tau_{m}>t_{m},\ldots,\tau_1>t_1)= e^{-\lambda (t_1+\ldots + t_{m})} = e^{-\lambda s_{m}}.
\end{equation}
Using~\eqref{recurse} again, in the continuous case,
\begin{align}\begin{split}\label{MGF1}
&{\mathbb E}\left(e^{u\tau_m}\mathbbm{1}_{\tau_m \le t_m}|\tau_{m-1}>t_{m-1},\ldots,\tau_1>t_1\right) \\
&= \int_0^{t_m} e^{us}\lambda e^{-\lambda s}\,ds =  \frac{\lambda}{u-\lambda} \left(e^{(u-\lambda)t_m}-1\right),
\end{split}
\end{align}
while in the discrete case, where $e^{-\lambda} = 1-p$, 
\begin{align}\begin{split}\label{MGF2}
&{\mathbb E}\left(e^{u\tau_m}\mathbbm{1}_{\tau_m \le t_m}|\tau_{m-1}>t_{m-1},\ldots,\tau_1>t_1\right) \\
&= \sum_{s=1}^{t_m} e^{us}e^{-\lambda (s-1)}(1-e^{-\lambda}) = \frac{e^u-e^{u-\lambda}}{1-e^{u-\lambda}}\left(1-e^{(u-\lambda)t_m}\right).
\end{split}
\end{align}
Note also that 
\begin{equation}\label{LM}
\{L = m\} = \{\tau_m \le t_m,\tau_{m-1} > t_{m-1},\ldots,\tau_{1} > t_1\}.
\end{equation}
Consider the continuous case. Combining~\eqref{induction},~\eqref{MGF1} and~\eqref{LM} gives
\begin{align}\begin{split}\label{uzero}
&{\mathbb E}\left(e^{u\tau_m}\mathbbm{1}_{L=m}\right)\\ &= {\mathbb E}\left(e^{u\tau_m}\mathbbm{1}_{\tau_m\le t_m}\mathbbm{1}_{\tau_{m-1}>t_{m-1},\ldots,\tau_1>t_1}\right)\\
 &= {\mathbb E}\left(e^{u\tau_m}\mathbbm{1}_{\tau_m\le t_m}|{\tau_{m-1}>t_{m-1},\ldots,\tau_1>t_1}
 \right){\mathbb P}(\tau_{m-1}>t_{m-1},\ldots,\tau_1>t_1)\\
&= \frac{\lambda}{u-\lambda} \left(e^{(u-\lambda)t_m}-1\right) e^{-\lambda s_{m-1}}.
\end{split}
\end{align}
We now see that $s_{L-1}+\tau_L$ has the MGF of an exponential($\lambda$) random variable:
\begin{align*}
{\mathbb E}\left(e^{u(s_{L-1}+\tau_L)}\right) &= \sum_{m=1}^\infty {\mathbb E}\left(e^{u(s_{L-1}+\tau_L)}\mathbbm{1}_{L=m}\right)  \\
&= \sum_{m=1}^\infty e^{us_{m-1}}{\mathbb E}\left(e^{u\tau_m}\mathbbm{1}_{L=m}\right)\\
&= \frac{\lambda}{u-\lambda}\sum_{m=1}^\infty \left(e^{(u-\lambda)s_m}-e^{(u-\lambda)s_{m-1}}\right)\\
&= \frac{\lambda}{\lambda - u} \qquad \text{if }u<\lambda.
\end{align*}
Similarly, in the discrete case, combining~\eqref{induction},~\eqref{MGF2} and~\eqref{LM} gives
\begin{align}\begin{split}\label{uzero2}
&{\mathbb E}\left(e^{u\tau_m}\mathbbm{1}_{L=m}\right)\\ &= {\mathbb E}\left(e^{u\tau_m}\mathbbm{1}_{\tau_m\le t_m}\mathbbm{1}_{\tau_{m-1}>t_{m-1},\ldots,\tau_1>t_1}\right)\\
&= {\mathbb E}\left(e^{u\tau_m}\mathbbm{1}_{\tau_m\le t_m}|{\tau_{m-1}>t_{m-1},\ldots,\tau_1>t_1}\right){\mathbb P}(\tau_{m-1}>t_{m-1},\ldots,\tau_1>t_1)\\
&= \frac{e^u-e^{u-\lambda}}{1-e^{u-\lambda}}\left(1-e^{(u-\lambda)t_m}\right) e^{-\lambda s_{m-1}}.
\end{split}
\end{align}
This shows again that $s_{L-1}+\tau_L$ has the MGF of a geometric($p$) random variable, via
\begin{align*}
{\mathbb E}\left(e^{u(s_{L-1}+\tau_L)}\right) &= \sum_{m=1}^\infty {\mathbb E}\left(e^{u(s_{L-1}+\tau_L)}\mathbbm{1}_{L=m}\right)  \\
&= \sum_{m=1}^\infty e^{us_{m-1}}{\mathbb E}\left(e^{u\tau_m}\mathbbm{1}_{L=m}\right)\\
&= \frac{e^u-e^{u-\lambda}}{1-e^{u-\lambda}}\sum_{m=1}^\infty (e^{(u-\lambda)s_{m-1}}-e^{(u-\lambda)s_m}) \\
&=\frac{e^u-e^{u-\lambda}}{1-e^{u-\lambda}} 
\qquad \text{if }u<\lambda\\
&= \frac{pe^u}{1-(1-p)e^u} \qquad \text{ if }u< -\log(1-p).
\end{align*}
\end{proof}

\begin{proof}[Proof of Theorem~\ref{theorem_dyn}]
Let $A$ be a subset of the complement of $U$. Note that, due to~\eqref{assump1}, 
the events $\{X_m(T_m) \in A,T_m\le t\}$ and
$\{T_{m-1}>t_{m-1},\ldots,T_1>t_1\}$ are independent conditional on 
$\{X_m(0)= x\}$. Using this,~\eqref{assump2} and 
Proposition~\ref{prop_indep_exit},
\begin{align}\begin{split}\label{longprob}
&{\mathbb P}(X_m(T_m) \in A,T_m \le t|T_{m-1} > t_{m-1},\ldots,T_{1} > t_1) \\
&= \int {\mathbb P}(X_m(T_m) \in A,T_m \le  t|X_m(0) = x,T_{m-1} > t_{m-1},\ldots,T_{1} > t_1)\\
&\qquad \qquad \qquad \times
{\mathbb P}(X_m(0) \in dx|T_{m-1} > t_{m-1},\ldots,T_{1} > t_1) \\
&= \int {\mathbb P}(X_m(T_m) \in A,T_m \le  t|X_m(0) = x)\rho(dx) \\
&= {\mathbb P}(X(T) \in A,T \le  t) \\
&= {\mathbb P}(X(T) \in A){\mathbb P}(T \le t).\end{split}
\end{align}
Taking $A$ as the complement of $U$ in~\eqref{longprob}, and using Proposition~\ref{prop_exp_exit}, 
\begin{equation}\label{PTm}
    {\mathbb P}(T_m > t|T_{m-1} > t_{m-1},\ldots,T_{1} > t_1) =  {\mathbb P}(T>t) = e^{-\lambda t}, \quad \text{some }\lambda>0.
\end{equation}
By~\eqref{assump2}, ${\mathbb P}(T_1>t) = e^{-\lambda t}$. 
Thus by Proposition~\ref{prop_1}, ${\mathcal L}(T_{par}) = {\mathcal L}(T)$. Notice 
\begin{equation}\label{LM2}
\{L = m\} = \{T_m \le t_m,T_{m-1}>t_{m-1},\ldots,T_1>t_1\}.
\end{equation}
From~\eqref{longprob},~\eqref{PTm} and~\eqref{LM2} we have
\begin{align}\begin{split}\label{long2}
&{\mathbb P}(X_{par} \in A, T_{par} > t|L=m) \\
&={\mathbb P}(X_m(T_m) \in A, s_{m-1}+T_m > t|L=m)\\
&={\mathbb P}\left(\left.X_m(T_m) \in A, T_m > t-s_{m-1}\right|T_m \le t_m,T_{m-1} > t_{m-1},\ldots,T_{1} > t_1\right)\\
&=  {\mathbb P}\left(\left.X_m(T_m) \in A,T_m\in (t-s_{m-1},t_m] \right|T_{m-1} > t_{m-1},\ldots,T_{1} > t_1\right)\\
&\qquad \qquad \qquad \times
{\mathbb P}(T_m \le t_m|T_{m-1} > t_{m-1},\ldots,T_{1} > t_1)^{-1} \\
 &={\mathbb P}(X(T) \in A){\mathbb P}(T \in(t-s_{m-1},t_m]){\mathbb P}(T \le t_m)^{-1} \\
 &= {\mathbb P}(X(T) \in A){\mathbb P}(T>t-s_{m-1}|T\le t_m).\end{split}
\end{align} 
From~\eqref{long2} we conclude
\begin{align}\begin{split}\label{totprob}
{\mathbb P}(X_{par} \in A, T_{par} > t) 
&= \sum_{m=1}^\infty {\mathbb P}(X_{par} \in A,T_{par}>t|L=m){\mathbb P}(L=m)\\
&= {\mathbb P}(X(T) \in A)\sum_{m=1}^\infty {\mathbb P}(T > t-s_{m-1}|T\le t_m){\mathbb P}(L=m).
\end{split}
\end{align}
In the last display, taking $t =0$ shows ${\mathbb P}(X_{par} \in A)= {\mathbb P}(X(T) \in A)$, while taking $A$ as the complement of $U$ shows ${\mathbb P}(T_{par}>t) = \sum_{m=1}^\infty {\mathbb P}(T > t-s_{m-1}|T\le t_m){\mathbb P}(L=m)$. Thus,~\eqref{totprob} 
shows that ${\mathcal L}(X_{par}) = {\mathcal L}(X(T))$ 
and $X_{par}, T_{par}$ are independent. 
\end{proof}

\begin{proof}[Proof of Theorem~\ref{theorem_avg}]
We consider only the discrete time case, since 
the arguments in the continuous time case are 
analogous. 
Let $r \in {\mathbb N}$ be fixed. Observe that
\begin{equation*}
\sum_{t=0}^{r-1} {\mathbb P}(T>t) = 
\sum_{t=0}^{r-1} {\mathbb E}({\mathbbm{1}}_{T>t}) 
= \sum_{t=0}^{\infty} {\mathbb E}({\mathbbm{1}}_{T\wedge r>t}) 
= {\mathbb E}(T \wedge r).
\end{equation*}
By the preceding display and the 
the definition~\eqref{QSDdef} of 
the QSD $\rho$,
\begin{align}\begin{split}\label{eqabove}
{\mathbb E}\left(\sum_{t=0}^{T\wedge r-1}g(X(t))\right) &= 
\sum_{s=1}^\infty \sum_{t=0}^{s-1} {\mathbb E}\left(g(X(t)){\mathbbm{1}}_{T \wedge r = s}\right) \\
&= \sum_{t=0}^\infty {\mathbb E}\left(g(X(t)){\mathbbm{1}}_{T \wedge r > t}\right) \\
&= \sum_{t=0}^{r-1} {\mathbb E}\left(g(X(t)){\mathbbm{1}}_{T>t}\right) \\
&= \sum_{t=0}^{r-1} {\mathbb E}(g(X(t))|T>t){\mathbb P}(T>t)\\
&= \left(\int g\,d\rho\right){\mathbb E}(T \wedge r).\end{split}
\end{align}
Since 
$\{L \ge m\} = \{T_{m-1}>t_{m-1},\ldots,T_1>t_1\}$,
using~\eqref{assump1},~\eqref{assump2} and~\eqref{eqabove} we get 
\begin{align}\begin{split}\label{eqabove2}
&{\mathbb E}\left(\left.\sum_{t=0}^{T_m \wedge t_m-1} g(X_m(t))\right|L \ge m\right) \\
&= \int {\mathbb E}\left(\left.\sum_{t=0}^{T_m \wedge t_m-1} g(X_m(t))\right|X_m(0) = x,L\ge m\right) {\mathbb P}(X_m(0) \in dx|L \ge m)\\
&= \int {\mathbb E}\left(\left.\sum_{t=0}^{T_m \wedge t_m-1} g(X_m(t))\right|X_m(0) = x\right)\rho(dx)\\
&= {\mathbb E}\left(\sum_{t=0}^{T \wedge t_m-1} g(X(t))\right) = \left(\int g\,d\rho\right){\mathbb E}(T\wedge t_m)\\
&= \left(\int g\,d\rho\right){\mathbb E}(T_m \wedge t_m|L \ge m)
\end{split}
\end{align}
where the last line of~\eqref{eqabove2} follows from taking $g \equiv \mathbbm{1}$ in the first four lines of~\eqref{eqabove2}. By Theorem~\ref{theorem_dyn}, ${\mathcal L}(T) = {\mathcal L}(T_{par})=  {\mathcal L}\left(\sum_{m=1}^L T_m \wedge t_m\right)$ and thus
\begin{align}\begin{split}\label{eqabove3}
{\mathbb E}(T) &= {\mathbb E}\left(\sum_{m=1}^L T_m \wedge t_m\right)\\
&= \sum_{n=1}^\infty \sum_{m=1}^n{\mathbb E}\left(T_m \wedge t_m{\mathbbm{1}}_{L=n}\right) \\
&= \sum_{m=1}^\infty {\mathbb E}\left(T_m \wedge t_m {\mathbbm{1}}_{L \ge m}\right) \\
&= \sum_{m=1}^\infty {\mathbb E}(T_m \wedge t_m |L \ge m){\mathbb P}(L \ge m).
\end{split}
\end{align}
Now by~\eqref{eqabove2} and~\eqref{eqabove3},
\begin{align}\begin{split}\label{same}
{\mathbb E}\left(\sum_{m=1}^{L} \sum_{t=0}^{T_m \wedge t_m-1} g(X_m(t))\right) &= 
\sum_{n=1}^\infty \sum_{m=1}^{n} {\mathbb E}\left({\mathbbm{1}}_{L=n}\sum_{t=0}^{T_m \wedge t_m-1} g(X_m(t))\right) \\
&=  \sum_{m=1}^\infty  {\mathbb E}\left({\mathbbm{1}}_{L\ge m}\sum_{t=0}^{T_m \wedge t_m-1} g(X_m(t))\right) \\
&= \sum_{m=1}^\infty  {\mathbb E}\left(\left.\sum_{t=0}^{T_m \wedge t_m-1} g(X_m(t))\right|L\ge m\right){\mathbb P}(L\ge m) \\
&= \left(\int g\,d\rho\right)\sum_{m=1}^\infty {\mathbb E}(T_m \wedge t_m|L \ge m){\mathbb P}(L\ge m) \\
&= \left(\int g\,d\rho\right){\mathbb E}(T).\end{split}
\end{align}
Letting $r \to \infty$ in~\eqref{eqabove}, using dominated convergence, and 
comparing with~\eqref{same}, $${\mathbb E}\left(\sum_{m=1}^{L} \sum_{t=0}^{T_m \wedge t_m-1} g(X_m(t))\right) = {\mathbb E}\left(\sum_{t=0}^{T-1}g(X(t))\right)$$
as desired.
\end{proof}

Below we will need 
the following basic facts. 
\begin{lemma}\label{lem_basic}
Let ${\mathcal F}$, 
${\mathcal G}$,
${\mathcal H}$, and ${\mathcal K}$ be $\sigma$-algebras.
\begin{itemize}
    \item[] {(i)} Let $\sigma({\mathcal G},{\mathcal H})$ be the $\sigma$-algebra 
    generated by ${\mathcal G}$ and ${\mathcal H}$. Suppose that 
    ${\mathcal F}$,
    $\sigma({\mathcal G},{\mathcal H})$ 
    are independent conditional on ${\mathcal K}$, and that ${\mathcal G}$, ${\mathcal H}$ are independent conditional on ${\mathcal K}$. 
    Then ${\mathcal F}$, ${\mathcal G}$, ${\mathcal H}$ are mutually 
    independent conditional on ${\mathcal K}$.
    \item[] {(ii)} Suppose ${\mathcal H}\subseteq {\mathcal G}$. If ${\mathcal F}$ 
    and ${\mathcal G}$ are independent, 
    then ${\mathcal F}$ 
    and ${\mathcal G}$ are independent 
    conditional on ${\mathcal H}$.
\end{itemize}
\end{lemma}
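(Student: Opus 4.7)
The plan for both parts is to verify each conditional independence statement directly from the defining identity: $\sigma$-algebras ${\mathcal A}_1, \ldots, {\mathcal A}_n$ are independent conditional on ${\mathcal K}$ if and only if ${\mathbb P}(A_1 \cap \cdots \cap A_n \mid {\mathcal K}) = \prod_i {\mathbb P}(A_i \mid {\mathcal K})$ almost surely for every $A_i \in {\mathcal A}_i$. In particular, pairwise conditional independences follow from the full product identity by taking some of the events to be $\Omega$, so it suffices to verify the top-dimensional product formula.

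For part (i), I would fix arbitrary events $F \in {\mathcal F}$, $G \in {\mathcal G}$, $H \in {\mathcal H}$, and observe that $G \cap H \in \sigma({\mathcal G}, {\mathcal H})$. The first hypothesis then gives
\[
{\mathbb P}(F \cap G \cap H \mid {\mathcal K}) = {\mathbb P}(F \mid {\mathcal K}) \, {\mathbb P}(G \cap H \mid {\mathcal K}) \quad \text{a.s.,}
\]
while the second hypothesis gives ${\mathbb P}(G \cap H \mid {\mathcal K}) = {\mathbb P}(G \mid {\mathcal K}) \, {\mathbb P}(H \mid {\mathcal K})$ a.s. Composing these yields the triple-product identity, which is precisely the defining condition for mutual independence of ${\mathcal F}, {\mathcal G}, {\mathcal H}$ conditional on ${\mathcal K}$.

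For part (ii), the reduction from ${\mathcal F} \perp {\mathcal G}$ to ${\mathcal F} \perp {\mathcal H}$ (given ${\mathcal H} \subseteq {\mathcal G}$) is immediate and yields ${\mathbb E}[{\mathbbm{1}}_F \mid {\mathcal H}] = {\mathbb P}(F)$ a.s.\ for any $F \in {\mathcal F}$. The main computation is then: for any $F \in {\mathcal F}$, $G \in {\mathcal G}$, and $H \in {\mathcal H}$, the inclusion $G \cap H \in {\mathcal G}$ together with independence of ${\mathcal F}$ and ${\mathcal G}$ gives
\[
{\mathbb E}[{\mathbbm{1}}_F {\mathbbm{1}}_G {\mathbbm{1}}_H] = {\mathbb P}(F) \, {\mathbb E}[{\mathbbm{1}}_{G \cap H}] = {\mathbb P}(F) \, {\mathbb E}[{\mathbbm{1}}_G {\mathbbm{1}}_H].
\]
Since $H \in {\mathcal H}$ was arbitrary and ${\mathbb P}(F) \, {\mathbb E}[{\mathbbm{1}}_G \mid {\mathcal H}]$ is ${\mathcal H}$-measurable, this identifies ${\mathbb E}[{\mathbbm{1}}_{F \cap G} \mid {\mathcal H}] = {\mathbb P}(F) \, {\mathbb E}[{\mathbbm{1}}_G \mid {\mathcal H}] = {\mathbb P}(F \mid {\mathcal H}) \, {\mathbb P}(G \mid {\mathcal H})$ a.s., as required.

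Neither part involves a substantive obstacle; both are routine manipulations of conditional expectations. The mildly delicate point in part (ii) is the standard passage from the integral identity (valid for all $H \in {\mathcal H}$) to the almost-sure conditional-expectation identity, which is handled by uniqueness of ${\mathcal H}$-measurable versions of the conditional expectation.
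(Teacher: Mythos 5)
Your proof is correct and follows essentially the same route as the paper's: part (i) by composing the two conditional product identities via $G\cap H\in\sigma({\mathcal G},{\mathcal H})$, and part (ii) by verifying the defining integral identity against arbitrary $H\in{\mathcal H}$ using $G\cap H\in{\mathcal G}$, the unconditional independence, and ${\mathbb P}(F\mid{\mathcal H})={\mathbb P}(F)$. No gaps.
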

\begin{proof}
Throughout let $A \in {\mathcal F}$, 
$B \in {\mathcal G}$, and 
$C \in {\mathcal H}$, and write 
    ${\mathbbm{1}}_S$ for the 
    characteristic or indicator function of a set $S$. Consider {\em (i)}. Since $A \in {\mathcal F}$, $B \cap C \in \sigma({\mathcal G},{\mathcal H})$, and ${\mathcal F}$,  $\sigma({\mathcal G},{\mathcal H})$
 are independent conditional on ${\mathcal K}$, ${\mathbb E}(\mathbbm{1}_A \mathbbm{1}_{B \cap C}|{\mathcal K}) 
    = 
    {\mathbb E}(\mathbbm{1}_A|{\mathcal K}) {\mathbb E}(\mathbbm{1}_{B \cap C}|{\mathcal K})$ almost surely. Similarly $ {\mathbb E}(\mathbbm{1}_{B}\mathbbm{1}_C|{\mathcal K})
    = {\mathbb E}(\mathbbm{1}_{B}|{\mathcal K}) 
    {\mathbb E}(\mathbbm{1}_{C}|{\mathcal K})$ 
    almost surely. Thus,
\begin{align*}
{\mathbb E}(\mathbbm{1}_A \mathbbm{1}_B\mathbbm{1}_C|{\mathcal K}) = 
    {\mathbb E}(\mathbbm{1}_A \mathbbm{1}_{B \cap C}|{\mathcal K}) 
    &= 
    {\mathbb E}(\mathbbm{1}_A|{\mathcal K}) {\mathbb E}(\mathbbm{1}_{B \cap C}|{\mathcal K}) \\
    &= {\mathbb E}(\mathbbm{1}_A|{\mathcal K}) {\mathbb E}(\mathbbm{1}_{B}\mathbbm{1}_C|{\mathcal K})
    ={\mathbb E}(\mathbbm{1}_A|{\mathcal K}) {\mathbb E}(\mathbbm{1}_{B}|{\mathcal K}) 
    {\mathbb E}(\mathbbm{1}_{C}|{\mathcal K})
    \end{align*}
    almost surely, which proves {\em (i)}.

    Consider now {\em (ii)}. Define $P = {\mathbb E}(\mathbbm{1}_A\mathbbm{1}_B|{\mathcal H})$ and $Q = {\mathbb E}(\mathbbm{1}_A|{\mathcal H}){\mathbb E}(\mathbbm{1}_B|{\mathcal H})$. 
   As $P$ and $Q$ are ${\mathcal H}$-measurable and $C \in {\mathcal H}$ is arbitrary, if ${\mathbb E}(P\mathbbm{1}_C) = {\mathbb E}(Q\mathbbm{1}_C)$ then 
   we can use uniqueness of conditional expectation to conclude $P = Q$ almost surely, 
   so that {\em(ii)} holds. Since ${\mathcal H} \subseteq {\mathcal G}$,  $B \cap C \in {\mathcal G}$.
    Moreover $A \in {\mathcal F}$ 
    and ${\mathcal F},{\mathcal G}$ 
    are independent, so
    \begin{equation}\label{condexp1}
        {\mathbb E}(P\mathbbm{1}_C) = {\mathbb E}(\mathbbm{1}_A \mathbbm{1}_B\mathbbm{1}_C) ={\mathbb P}(A \cap B \cap C) = {\mathbb P}(A){\mathbb P}(B \cap C).
    \end{equation}
    The first equality in~\eqref{condexp1} comes from definition of conditional expectation. Since $A \in {\mathcal F}$ 
    and ${\mathcal F},{\mathcal G}$ are independent, ${\mathbb E}(\mathbbm{1}_A|{\mathcal G}) = {\mathbb E}(\mathbbm{1}_A)$. 
    So by the tower property,
    \begin{equation*}
        {\mathbb E}(\mathbbm{1}_A|{\mathcal H})
        = {\mathbb E}({\mathbb E}(\mathbbm{1}_A|{\mathcal G})|{\mathcal H}) = 
        {\mathbb E}({\mathbb E}(\mathbbm{1}_A)|{\mathcal H})
        ={\mathbb E}(\mathbbm{1}_A)=
        {\mathbb P}(A).
    \end{equation*}
    Moreover since $\mathbbm{1}_C$ is ${\mathcal H}$-measurable, ${\mathbb E}(\mathbbm{1}_B|{\mathcal H})\mathbbm{1}_C = {\mathbb E}(\mathbbm{1}_B\mathbbm{1}_C|{\mathcal H})$. Thus,
    \begin{align}\begin{split}\label{condexp2}
        {\mathbb E}(Q\mathbbm{1}_C) &= {\mathbb E}({\mathbb E}(\mathbbm{1}_A|{\mathcal H})
        {\mathbb E}(\mathbbm{1}_B|{\mathcal H})\mathbbm{1}_C)\\
        &={\mathbb E}({\mathbb P}(A)
        {\mathbb E}(\mathbbm{1}_B\mathbbm{1}_C|{\mathcal H})) \\
        &= {\mathbb P}(A){\mathbb E}({\mathbb E}(\mathbbm{1}_{B\cap C}|{\mathcal H}))\\
        &= {\mathbb P}(A){\mathbb E}(\mathbbm{1}_{B\cap C}) = {\mathbb P}(A){\mathbb P}(B \cap C),\end{split}
    \end{align}
    with the last line using the tower property.
    Now {\em(ii)} follows from~\eqref{condexp1}-\eqref{condexp2}.
\end{proof}

\begin{proof}[Proof of Proposition~\ref{prop_synch}]
Adopt the notation of Assumption~\ref{A1}. 
Below let $j,k,\ell$ denote positive integers. It is easy to check that 
$r_{j} = r_k$ if and only if $k-j$ is an integer 
multiple of $R$, while $m_{k-\ell R} = m_k-\ell$ when 
$\ell \le m_k$. Thus, 
\begin{align}\begin{split}\label{ii-iii}
\{m_j: j <k,\,r_j = r_k\} &= 
\{m_{k-\ell R}: \ell \le  m_k\} \\
&= \{0,1,\ldots,m_k-1\},
\end{split}
\end{align}
with both sides empty if $m_k = 0$.
See Figure~\ref{fig_synchronous}. 
Define $$\zeta^r = \inf\{t\ge 0:Y^r(t) \notin U\}.$$
Fix $k \ge 2$ and note that, by definition of the fragments,
\begin{equation}\label{subset_of}
  \{\zeta^{r_j}>(m_{j}+1)\Delta t,\,\,\,\forall\,j <k\} \subseteq  \{T_{k-1}>\Delta t,\ldots,T_1>\Delta t\},
\end{equation}
where $E \subseteq E'$ indicates the event $E'$ occurs whenever $E$ occurs. By~\eqref{ii-iii}, 
\begin{align}\begin{split}\label{subsetof}
    &\{Y^{r_j}(t) \in U \,\,\,\forall\,t \in [m_j\Delta t,(m_j+1)\Delta t],j\le k\} \\
    &\subseteq \{Y^{r_k}(t) \in U \,\,\,\forall\, t \in [0,(m_k+1)\Delta t]\}.
    \end{split}
\end{align}
By definition of the fragments and~\eqref{subsetof},
\begin{align}\begin{split}\label{supset_of}
\{T_{k-1}>\Delta t,\ldots,T_1>\Delta t\} &=  \{Y^{r_j}(t) \in U, \,\,\,\forall\,t \in [m_j\Delta t,(m_j+1)\Delta t],j<k\} \\
&\subseteq \{Y^{r_j}(t) \in U, \,\,\,\forall\,t \in [0,(m_j+1)\Delta t],j<k\}
\\
&= \{\zeta^{r_j}>(m_{j}+1)\Delta t,\,\,\,\forall\,j <k\}.
\end{split}
\end{align}
Combining~\eqref{subset_of} and~\eqref{supset_of}, 
\begin{equation}\label{frag}
\{T_{k-1}>\Delta t,\ldots,T_1>\Delta t\} = \{\zeta^{r_j}>(m_{j}+1)\Delta t,\,\,\,\forall\,j<k\}.
\end{equation}
Due to 
independence of $Y^r(t)_{t \ge 0}$ over $r$ and Lemma~\ref{lem_basic}{\em (ii)},
\begin{align}\begin{split}\label{conditional_on}
    &\text{conditional on } \{\zeta^{r_j} > (m_j + 1)\Delta t,\,\,\,\forall\,j<k \,\,\,s.t.\,\,\, r_j = r_k\},\\
    &\text{the event }\{\zeta^{r_j} > (m_j + 1)\Delta t ,\,\,\,\forall\,j<k\,\,\,s.t.\,\,\,r_j \ne r_k\}\\
    &\text{is independent 
    of }Y^{r_k}(m_k\Delta t).
    \end{split}
\end{align}
Again using~\eqref{ii-iii},
\begin{equation}\label{zeta_rk}
\{\zeta^{r_k} > m_k\Delta t\} = \{\xi^{r_j}>(m_j+1)\Delta t, \,\,\,\forall j <k \,\,\,s.t.\,\,\,r_j = r_k\}.
\end{equation}
Combining~\eqref{frag},~\eqref{conditional_on}, and~\eqref{zeta_rk}, and using~\eqref{QSDdef},
\begin{align}\begin{split}\label{conclude}
&{\mathcal L}(X_{k}(0)|T_{k-1}>\Delta t,\ldots,T_1>\Delta t) \\
&= {\mathcal L}(Y^{r_{k}}(m_k\Delta t)|\zeta^{r_j}>(m_{j}+1)\Delta t,\,\,\,\forall\,j<k)\\
&= {\mathcal L}\left(\left.Y^{r_{k}}(m_k\Delta t)\right|\zeta^{r_j} > (m_j + 1)\Delta t,\,\,\,\,\forall\,j<k \,\,\,s.t.\,\,\,r_j = r_k\right)\\
&= {\mathcal L}\left(\left.Y^{r_{k}}(m_k\Delta t)\right|\zeta^{r_k}>m_k\Delta t\right)= \rho.
\end{split}
\end{align}
As $Y^1(t)_{t \ge 0}$
is a copy of $X(t)_{t \ge 0}$ 
with ${\mathcal L}(X(0))= \rho$, in particular 
${\mathcal L}(Y^1(0)) = \rho$. Thus, 
$${\mathcal L}(X_1(0)) = {\mathcal L}(Y^{r_1}(m_1)) = {\mathcal L}(Y^1(0)) = \rho.$$
We have now established~\eqref{assump2} of 
Assumption~\ref{A1}. 

Consider now~\eqref{assump1}. Let $k \ge 1$.
Due to independence of $Y^r(t)_{t \ge 0}$ over $r$ and Lemma~\ref{lem_basic}{\em(ii)}, 
we see that 
conditional on $X_k(0)$,  
$(X_\ell(t)_{0 \le t \le \Delta t})_{r_\ell = r_k}$ is independent 
of $(X_\ell(t)_{0 \le t \le \Delta t})_{r_\ell \ne r_k}$. 
For $k\ge 2$, the Markov property of 
$Y^{r_k}(t)_{t \ge 0}$ 
and~\eqref{ii-iii}
show that,
conditional on $X_k(0)$, 
$X_k(t)_{0 \le t \le \Delta t}$ is 
independent of $(X_\ell(t)_{0 \le t \le \Delta t})_{\ell < k, r_\ell = r_k}$. By Lemma~\ref{lem_basic}{\em(i)} with ${\mathcal F} = \sigma((X_\ell(t)_{0 \le t \le \Delta t})_{\ell<k, r_\ell\ne r_k})$, 
${\mathcal G} =\sigma((X_\ell(t)_{0 \le t \le \Delta t})_{\ell<k, r_\ell= r_k})$, 
${\mathcal H} = \sigma(X_k(t)_{0 \le t \le \Delta t})$ and ${\mathcal K} = \sigma(X_k(0))$, we have,
for $k\ge 2$,
\begin{equation}\label{I4}
\text{conditional on }X_k(0), 
\,X_k(t)_{0 \le t \le \Delta t} \text{ is 
independent of }(X_\ell(t)_{0 \le t \le \Delta t})_{\ell < k}.
\end{equation}

Now define the fragments' irrelevant 
futures as follows. Let $X_k(t)_{t \ge \Delta t}$ be 
copies of $X(t)_{t \ge 0}$ that evolve 
forward of time independently 
of everything else. That is, for each 
$k \ge 1$, conditional on $X_k(\Delta t)$, 
$X_k(t)_{t \ge \Delta t}$, $X_k(t)_{0 \le t \le 
\Delta t}$, and $(X_\ell(t)_{t \ge 0})_{\ell<k}$ 
are mutually independent. From~\eqref{I4} 
it is easy to see this is possible, 
as the irrelevant futures have no 
bearing on the definitions of the 
fragments. Now by construction of 
the irrelevant futures and~\eqref{I4}, 
it is easy to see that for $k\ge 2$,
conditional on $X_k(0)$, 
$X_k(t)_{t \ge 0}$ is 
independent of $(X_\ell(t)_{t \ge 0})_{\ell<k}$. 
This proves~\eqref{assump1} 
in Assumption~\ref{A1}.
\end{proof}

\begin{proof}[Proof of Proposition~\ref{prop_asynch}]
Adopt the notation of Assumption~\ref{A1}. 
This proof will follow 
the same basic steps as 
the proof of Proposition~\ref{prop_synch}, 
but the justifications will be 
different. Let $
\zeta^r = \inf\{t\ge 0:Y^r(t) \notin U\}$ be as above. 

Fix $k \ge 2$. We first 
claim that~\eqref{ii-iii} still 
holds. Let 
$n \in \{0,1,\ldots,m_k-1\}$. 
By the surjectivity assumption 
in {\em (iii)} there is 
$j$ such that $r_j = r_k$ 
and $m_j = n$. Since $m_j = n < m_k$ 
and $r_j = r_k$, from {\em (ii)} 
we have $t_{wall}^{r_j}(m_j) 
\le t_{wall}^{r_k}(m_k)$. Since 
$j \ne k$, using monotonicity 
in {\em (iii)} we conclude 
$j <k$. 
Thus $\{0,1,\ldots,m_k-1\} 
\subseteq \{m_j:j <k,\,r_j = r_k\}$. 
Now consider $m_j$ such 
that $j <k$ and 
$r_j = r_k$. By monotonicity 
in {\em (iii)} we must have 
$t_{wall}^{r_j}(m_j)<t_{wall}^{r_k}(m_k)$. Then by {\em (ii)} we can conclude 
$m_j < m_k$. Thus 
$\{m_j:j <k,\,r_j = r_k\} 
= \{0,1,\ldots,m_k-1\}$.

Next we establish~\eqref{assump2}.
Equipped with~\eqref{ii-iii}, 
we see that~\eqref{zeta_rk} holds. 
Moreover, since~\eqref{frag1} 
agrees with~\eqref{frag2}, 
the same steps as 
in the Proof of Proposition~\ref{prop_synch} show 
that~\eqref{frag} holds. 
On the other 
hand,~\eqref{conditional_on} 
holds because of {\em (i)}, Lemma~\ref{lem_basic}{\em(ii)},  
and independence
of $Y^r(t)_{t \ge 0}$ over $r$. 
The sequence of equalities 
in~\eqref{conclude} 
then holds, with the 
last equality using 
{\em (i)} again. It 
remains to show that ${\mathcal L}(X_1(0)) = 
\rho$. Suppose $m_1 > 0$. By surjectivity in 
{\em (iii)} there is $j>1$ such 
that $m_j = 0$ and $r_j = r_1$. 
But then {\em (ii)} implies $t_{wall}^{r_j}(m_j) \le t_{wall}^{r_1}(m_1)$, which 
contradicts monotonicity in 
{\em (iii)}. Thus $m_1 = 0$, so we can apply {\em (i)} 
to conclude ${\mathcal L}(X_1(0)) = {\mathcal L}(Y^{r_1}(m_1)) = \rho$.  
Thus~\eqref{assump2} in in Assumption~\ref{A1} holds.

Consider now~\eqref{assump1}. 
By {\em (i)} and independence of $Y^r(t)_{t \ge 0}$ over $r$, 
conditional on $X_k(0)$, 
$(X_\ell(t)_{0 \le t \le \Delta t})_{r_\ell = r_k}$ is independent 
of $(X_\ell(t)_{0 \le t \le \Delta t})_{r_\ell \ne r_k}$. 
Recall that~\eqref{ii-iii} still holds. Thus for $k\ge 2$, by the Markov property of 
$Y^{r_k}(t)_{t \ge 0}$ and~\eqref{ii-iii}, 
conditional on $X_k(0)$, 
$X_k(t)_{0 \le t \le \Delta t}$ is 
independent of $(X_\ell(t)_{0 \le t \le \Delta t})_{\ell < k, r_\ell = r_k}$. 
By Lemma~\ref{lem_basic}{\em (i)} 
we conclude~\eqref{I4} holds for $k \ge 2$. 
Let the trajectory fragments' irrelevant futures 
be independent of everything else 
as in the proof of Proposition~\ref{prop_synch}. 
Following the reasoning in that proof
we see that~\eqref{assump1} 
in Assumption~\ref{A1} holds.
\end{proof}

\begin{proof}[Proof of Theorem~\ref{thm_consistent}]
The statements {\em (i)} 
and {\em (ii)} follow from Propositions~\ref{prop_synch} 
and~\ref{prop_asynch}, 
respectively, with
$((\xi_n^r,\theta_n^r)_{n \ge 0})^{r=1,\ldots,R}$ 
taking the place of 
$(Y^r(t)_{t \ge 0})^{r=1,\ldots,R}$, 
and
with $t_m \equiv \Delta t = 1$ and $g(\xi,\theta) = \int_0^{\theta} f(\psi(t,\xi))\,dt$.
\end{proof}

\begin{proof}[Proof 
of Theorem~\ref{thm_consistent2}]
The statements {\em (i)} and {\em (ii)}  follow from Propositions~\ref{prop_synch} 
and~\ref{prop_asynch}, 
respectively, with $(Z^r(t)_{t \ge 0})^{r=1,\ldots,R}$ 
taking the place of 
$(Y^r(t)_{t \ge 0})^{r=1,\ldots,R}$, 
and with $t_m \equiv \Delta t$ and $g = f$.
\end{proof}

\subsection{Supplementary results}\label{sec:proofs2}

We first show that the decoupling 
of wall-clock times from 
the speed of computing 
$X(t)_{t \ge 0}$ is a necessary 
condition for consistency. 
Below we
break assumption 
{\em (i)} in Proposition~\ref{prop_asynch} by assuming the 
wall-clock times to 
obtain the
initial QSD samples 
$Y^r(0)$, $r=1,\ldots,R$,
in the parallel step 
are correlated with 
the positions of those 
samples.
\begin{remark}\label{rmk_bias}
In Proposition~\ref{prop_asynch}, 
if {(i)} does 
not hold,
then the conclusions 
of Theorem~\ref{theorem_dyn} 
and Theorem~\ref{theorem_avg} 
may not hold. 
\end{remark}
\begin{proof}[Example]
Let $X(t)_{t \ge 0}$ be a simple 
random walk on ${\mathbb Z}$, 
meaning $X(t+1)-X(t) = 1$ or $-1$, 
each with probability $1/2$. 
Let $U = \{0,1\}$. 
The QSD $\rho$ of $X(t)_{t \ge 0}$ in 
$U$ is simply the 
uniform distribution on $U$. Assume 
$X(t)_{t \ge 0}$ has initial 
distribution ${\mathcal L}(X(0)) = \rho$, 
and let $(Y^r(t)_{t \ge 0})^{r=1,\ldots,R}$ 
be independent copies of $X(t)_{t \ge 0}$. Suppose 
\begin{equation}\label{break}
    t_{wall}^r(0) < t_{wall}^s(0) 
    \qquad \text{whenever }
    Y^r(0) = 0 \text{  
and }Y^s(0) = 1.
\end{equation}
Notice that~\eqref{break} violates 
{\em (i)} of Proposition~\ref{prop_asynch}. 
Assume however that {\em (ii)}
and {\em (iii)} in Proposition~\ref{prop_asynch} hold. 
Then arguments similar 
to those in the proof 
of Proposition~\ref{prop_asynch} 
show that $\{Y^{r_1}(m_1) = 1\} 
= 
\{Y^r(0) = 1\,\,\,\forall\,r\}$. 
Adopt the notation of 
Algorithm~\ref{alg_gen}. 
Then by the above and the definition~\eqref{frag2} of the fragments, 
\begin{align}\begin{split}\label{agree1}
    {\mathbb P}(T_{par} = 1,X_{par} = 2) &= {\mathbb P}(Y^{r_1}(m_1) = 1,Y^{r_1}(m_1+1) = 2) \\
    &= \frac{1}{2}{\mathbb P}(Y^{r_1}(m_1) = 1)\\
    &= \frac{1}{2}{\mathbb P}(Y^r(0) = 1\,\,\,\forall\,r) = \frac{1}{2}\left(\frac{1}{2}\right)^R.
    \end{split}
\end{align}
Similarly,
\begin{align}\begin{split}\label{agree2}
    {\mathbb P}(T = 1,X(T)=2) = 
    {\mathbb P}(X(0) =1, X(1) = 2) 
    = \frac{1}{2}{\mathbb P}(X(0)=1) =\frac{1}{4}.
    \end{split}
\end{align}
Notice when $R>1$,~\eqref{agree1} and~\eqref{agree2} show  the conclusion of Theorem~\ref{theorem_dyn} does not 
hold, as
${\mathbb P}(T_{par} = 1,X_{par} = 2) 
\ne {\mathbb P}(T = 1,X(T)=2)$.
A similar construction shows 
the conclusion of Theorem~\ref{theorem_avg} can fail 
when {\em (i)} does not hold.
\end{proof}

The next two results below are formal 
calculations related to claims 
made in the text above. 
These results could be 
made precise using results 
in~\cite{durmus2018geometric,durmus2018piecewise}. 
However we stick to formal 
computations for brevity.

\begin{remark}\label{prop_balance}
Suppose~\eqref{ratebalance} holds for all 
 $x \in {\mathbb R}^{d-1}$ and $i \in {\mathcal I}$. Then $e^{-V(x)}$ 
is formally invariant for a PDMP generated by~\eqref{gen}.
\end{remark} 
\begin{proof}[Formal proof]
Let $L$ be defined as in~\eqref{gen}.
We will show that $$\sum_{i\in {\mathcal I}} \int_{{\mathbb R}^{d-1}} Lf(x,i) \pi(x,i)\,dx = 0$$ 
provided~\eqref{ratebalance} holds and
$\pi(x,i)\propto e^{-V(x)}$, where
 $\propto$ 
indicates proportional to.
Write $$\lambda_i(x,i) = -\sum_{j\ne i}\lambda_j(x,i).$$
With sufficient 
regularity we can integrate by parts to get
\begin{align}\begin{split}\label{formal1}
\sum_i \int Lf(x,i) \pi(x,i)\,dx &\propto
\sum_i \int \left(d_i \cdot \nabla f(x,i) + \sum_j \lambda_j(x,i)f(x,j)\right)e^{- V(x)}\,dx\\
&= -\sum_i \int f(x,i) \nabla \cdot \left(d_i e^{- V(x)}\right)dx \\
&\qquad \quad
+ \sum_i \sum_j \int \lambda_i(x,j)f(x,i)e^{- V(x)}\,dx \\
&= \sum_i \int f(x,i)\left(d_i \cdot \nabla V(x)  +\sum_j \lambda_i(x,j)\right)e^{-  V(x)}\,dx.
\end{split}
\end{align}
Above, all the sums are over ${\mathcal I}$ and integrals 
are over ${\mathbb R}^{d-1}$. If~\eqref{ratebalance} holds, 
\begin{align*}
  d_i \cdot \nabla V(x)  +\sum_j \lambda_i(x,j) &= 
  d_i \cdot \nabla V(x)  +\sum_{j\ne i} (\lambda_i(x,j)-\lambda_j(x,i)) = 0.
\end{align*}
Comparing with~\eqref{formal1} gives the result.
\end{proof}
Note that the calculation in 
Remark~\ref{prop_balance} 
shows~\eqref{ratebalance} is a necessary 
condition for~\eqref{gen} to define a 
PDMP with an invariant distribution of the form $\pi(x,i)\propto e^{-V(x)}$.

\begin{remark}\label{prop_balance1}
$e^{-\beta V(x)}$ 
is formally invariant for a PDMP 
generated by~\eqref{ell}. 
\end{remark}
\begin{proof}[Formal proof]
Let $L$ be defined as in~\eqref{ell}. We will show that $$\sum_{k=0}^{N-1} \int_\Omega Lf(x,k) \pi(x,k)\,dx = 0$$ provided
$\pi(x,k) \propto e^{-\beta V(x)}$. Recall $d_0,\ldots,d_{N-1} \in {\mathbb R}^{d-1}$ sum to $0$ and we
consider the
indices of the $d_k$'s as elements of ${\mathbb Z}_N$, 
the integers modulo $N$. Write 
$$F_{k,\ell}(x) = \beta(d_k+\ldots+d_{k+\ell})\cdot \nabla V(x).$$
With 
sufficient regularity we can integrate by parts to get
\begin{align*}
&\sum_{k =0}^{N-1} \int_\Omega Lg(x,k){\pi}(x,k)\,dx \\
&\propto 
\sum_{k =0}^{N-1} \int_\Omega \left(d_k \cdot \nabla g(x,k) + [g(x,k-1)-g(x,k)]\max_{0 \le \ell \le N-1}F_{k,\ell}(x)\right)e^{-\beta V(x)}\,dx \\
&= -\sum_{k =0}^{N-1} \int_\Omega g(x,k)\nabla \cdot \left(d_k e^{-\beta V(x)}\right)dx\\
&\qquad \quad+ \sum_{k =0}^{N-1} \int_\Omega g(x,k) \left(\max_{0 \le \ell \le N-1} F_{k+1,\ell}(x) - \max_{0 \le \ell \le N-1}F_{k,\ell}(x)\right)e^{-\beta V(x)}\,dx\\
&=\sum_{k =0}^{N-1} \int_\Omega  g(x,k)\left(\beta d_k \cdot \nabla V(x)+\max_{0 \le \ell \le N-1} F_{k+1,\ell}(x) -  \max_{0 \le \ell \le N-1}F_{k,\ell}(x) \right)e^{-\beta V(x)}\,dx,
\end{align*}
where when $\Omega = {\mathbb R}^{d-1}$ we assume $V$ grows 
sufficiently fast at $\infty$ 
so that we can 
neglect the boundary term 
from the integration by 
parts.
Observe that, because $\sum_{\ell=0}^{N-1} d_{k+\ell} = 0$ and 
$d_{k+N} = d_k$, we have
\begin{align*}
   & \{d_k+d_{k+1},d_k+d_{k+1}+d_{k+2},\ldots,d_k+\ldots+d_{k+N-1},d_k+\ldots+d_{k+N}\} \\
   & = \{d_k+d_{k+1},d_k+d_{k+1}+d_{k+2},\ldots,d_k+\ldots+d_{k+N-1},d_k\}\\
   &= \{d_k,d_{k}+d_{k+1},\ldots,d_{k+N-1}\}.
\end{align*}
It follows that
\begin{align*}
    &\beta d_k \cdot \nabla V(x)+\max_{0 \le \ell \le N-1} F_{k+1,\ell}(x) -  \max_{0 \le \ell \le N-1}F_{k,\ell}(x) \\
    &= \beta\max_{0 \le \ell \le N-1}
    (d_k+\ldots+d_{k+\ell+1})\cdot \nabla V(x) - \beta\max_{0 \le \ell \le N-1}(d_k+\ldots+d_{k+\ell})\cdot \nabla V(x)\\
    &= 0.
\end{align*}
This proves the desired result.
\end{proof}

\begin{remark}\label{rmk_ell}
$e^{-\beta V}$ is invariant for $Z(n\delta t)_{n \ge 0}$ 
defined in Algorithm~\ref{alg_ell}.
\end{remark}

\begin{proof}
Write the acceptance 
probability in Algorithm~\ref{alg_ell} as 
$$A_k(x) = \min_{0 \le \ell \le N-1}\exp\left(\beta V(x)-\beta V(x+d_k\delta t+\ldots + d_{k+\ell}\delta t)\right).$$
Arguing similarly as in the formal proof of Remark~\ref{prop_balance1}, since 
$\sum_{k=0}^{N-1} d_k = 0$
we have 
\begin{equation*}
    \frac{A_k(x)}{A_{k+1}(x+d_k\delta t)} = \exp(\beta V(x)-\beta V(x+d_k\delta t)).
\end{equation*}
Now let $\pi(x,k) \propto e^{-\beta V(x)}$. Then the last display shows that 
\begin{equation}\label{show_inv}
\pi(x+d_k\delta t,k) = \pi(x,k)A_k(x)+\pi(x+d_k\delta t,k+1)(1-A_{k+1}(x+d_k\delta t)).    
\end{equation}
Inspecting Algorithm~\ref{alg_ell}, 
we see that~\eqref{show_inv} 
demonstrates the required result.
\end{proof}

\section*{Acknowledgements}

The author gratefully 
thanks Peter Christman for 
producing the numerical 
results leading to Figures~\ref{fig1},~\ref{fig2} and~\ref{fig3} in Section~\ref{sec:numerics}, 
as well as 
Petr Plech{\'a}{\v c}, Gideon Simpson, and Ting Wang for helpful conversations. The author 
also gratefully acknowledges support 
from the
National Science Foundation 
via the awards
NSF-DMS-1522398 and 
NSF-DMS-1818726.

\bibliographystyle{siamplain}
\bibliography{bibliography.bib}

\end{document}